\newtheorem{theorem}{Theorem}
\theoremstyle{plain}
\newtheorem{corollary}{Corollary}
\newtheorem{definition}{Definition}
\newtheorem{lemma}{Lemma}
\newtheorem{proposition}{Proposition}
\newtheorem{remark}{Remark}
\numberwithin{equation}{section}
\begin{document}
\title[Homogeneous G-algebras]{ Homogeneous G-algebras II}
\author{Roberto Mart\'{\i}nez-Villa}
\address[R. Mart\'{\i}nez-Villa]{Centro de Ciencias Matem\'{a}ticas UNAM\\
Km 8 Antigua Carretera a P\'{a}tzcuaro, Morelia Mich. M\'{e}xico}
\email[R. Mart\'{\i}nez-Villa]{mvilla@matmor.unam.mx}
\urladdr{http://www.matmor.unam.mx}
\thanks{}
\author{Jeronimo Mondrag\'{o}n}
\curraddr[J. Mondrag\'{o}n]{Centro de Ciencias Matem\'{a}ticas UNAM\\
Km 8 Antigua Carretera a Patzcuaro, Morelia Mich, M\'{e}xico}
\email[J. Mondrag\'{o}n]{jeronimo@matmor.unam.mx}
\urladdr{http://www.matmor.unam.mx}
\thanks{}
\date{January 16, 2014}
\subjclass[2000]{Primary 05C38, 15A15; Secondary 05A15, 15A18}
\keywords{G-algebras, Weyl algebra, Koszul algebra}
\dedicatory{}
\thanks{}

\begin{abstract}
G-algebras, or Groebner bases algebras, were considered by Levandovsky in,
[Le], these algebras include very important families of algebras, like the
Weyl algebras [Co], [Mi$\ell $] and the universal enveloping algebra of a
finite dimensional Lie algebra. These algebras are not graded in a natural
way, but they are filtered. Going to the associated graded algebra is the
standard way to study their properties [Mi$\ell $]. In a previous paper
[MMo1] we studied an homogeneous version of the G-algebras, which results
naturally graded, and through a des homogenization property, we obtain an
ordinary G-algebra. The main results in our first paper were the following:

Homogeneous G-algebras are Koszul of finite global dimension, Artin Schelter
regular, noetherian and have a Poincare Birkoff basis. We give the structure
of their Yoneda algebras by generators and relations.

In the second part of the paper we consider the quantum polynomial ring and
prove, for the finitely generated graded modules, cohomology formulas
analogous to those we have in the usual polynomial ring.

In the third part we use the results of the previous part to study the
cohomology of the homogeneous G-algebras.

Part four was dedicated to the study of the relations between an homogeneous
G-algebra $\ B$ and its des homogenization $B$/(Z-1)$B=A$, Applications to
the study of finite dimensional Lie algebras and Weyl algebras were given.

In the last sections of the paper we study the relations, at the level of
modules, among the homogenized algebra $B$ , its graded localization $B_{z}$
, the des homogenization $A$=$B$/(Z-1)$B$ and the Yoneda algebra $B^{!}$.

In this paper we investigate the relations among all these algebras at the
level of derived categories.
\end{abstract}

\maketitle

\section{Some results on the homogeneous G- algebras}

Let $\Bbbk $ be a field of zero characteristic. A G- algebra $A_{n}$ has the
following description by generators and relations: $A_{n}$=$\Bbbk <$X$_{1}$,X%
$_{2}$,...X$_{n}>$/$<$X$_{i}$X$_{j}$- b$_{ij}$X$_{j}$X$_{i}$+$\overset{n}{%
\underset{k=1}{\sum }}$c$_{ij}^{k}$X$_{k}$\linebreak +d$_{ij}\mid $i$>$j, b$%
_{ij}\in \Bbbk $-\{0\}$>$, where $\Bbbk <$X$_{1}$,X$_{2}$,...X$_{n}>$ is the
free algebra in n generators.

Examples of these algebras are the following:

$\bigskip $i) The Weyl algebras $D_{n}$=$\Bbbk <$X$_{1}$,X$_{2}$,...X$_{n}$,$%
\partial _{1}$,$\partial _{2}$,...$\partial _{n}>$/I , with I= $<$[X$_{i}$,$%
\partial _{j}$]-$\delta _{ij}$,\linebreak \lbrack X$_{i}$,X$_{j}$],[$%
\partial _{i}$,$\partial _{j}$] $>$, and $\delta _{ij}$ Kronecker%
\'{}%
s delta.

ii) Let $\mathfrak{g}$ a finite dimensional Lie algebra with basis $\mu _{1}$%
,$\mu _{2}$,...$\mu _{n}$ and [$\mu _{i}$, $\mu _{j}$]=\linebreak $\sum
c_{i,j}^{k}\mu _{k}$. The Universal enveloping algebra is the algebra $U$($%
\mathfrak{g}$)=$\Bbbk <$X$_{1}$,X$_{2}$,...X$_{n}$,$>$/I, with the ideal I=$%
< $X$_{i}$X$_{j}$-X$_{j}$X$_{i}$- $\overset{n}{\underset{k=1}{\sum }}$c$%
_{ij}^{k}$X$_{k}>$.

iii) Quantized versions of i), ii).

\bigskip We know that G-algebras are notherian, they have a Poincare-Birkoff
basis, which induces a natural filtration, the associated graded algebra is
the quantum polynomial ring.

In the particular example of the Weyl algebras, we know that in addition
they are simple algebras.

It is of interest to study both, the category of finitely generated $A_{n}$%
-modules, and the derived category D$^{b}$(mod$_{A_{n}}$). This is not a
simple task since for examples i),ii) we do not in general know even the
irreducible $A_{n}$-modules, so what we do instead is to associate to $A_{n}$
the graded algebra $B_{n}$ defined by generators and relations as $B_{n}$=$%
\Bbbk <$X$_{1}$,X$_{2}$,...X$_{n}$,Z$>$/$<$X$_{i}$X$_{j}$- b$_{ij}$X$_{j}$X$%
_{i}$+$\overset{n}{\underset{k=1}{\sum }}$c$_{ij}^{k}$ZX$_{k}$+d$_{ij}$Z$%
^{2}\mid $i$>$j, b$_{ij}\in \Bbbk $-\{0\}, X$_{i}$Z-ZX$_{i}>$, where $\Bbbk
< $X$_{1}$,X$_{2}$,...X$_{n}$,Z$>$ is the free algebra in n generators. The
algebra $B_{n}$, thus defined will be called the homogeniztion of $A_{n}$
and we recover $A_{n}$ by the deshomogenization $A_{n}$ $\cong B_{n}$/ (Z-1)$%
B_{n}$. We call algebras like $B_{n}$, homogeneous G-algebras.

\bigskip\ Homogeneous G-algebras are much easier to study than G- algebras,
since, as we proved in our first paper, they are Koszul of finite global
dimension and Artin Schelter regular [AS], by Koszul theory [GM1],[GM2],[M]
,[Sm] its Yoneda algebra $B_{n}^{!}$ is a finite dimensional selfinjective
algebra, and there is an equivalence of triangulated categories D$^{b}$(gr$%
_{B_{n}^{!}}$)$\cong $D$^{b}$(gr$_{B_{n}}$) and a duality between the
categories of left $B_{n}$ and $B_{n}^{!}$ Koszul modules, respectively, $%
F:K_{B_{n}}\rightarrow K_{B_{n}^{!}}$. In order to understand the $A_{n}$
-modules via the finite dimensional algebra $B_{n}^{!}$, we must study the
relations between the categories mod$_{A_{n}}$ and gr$_{B_{n}^{!}}$. In
[MMo1] we concentrated in the structure of the algebras $B_{n}$ and $%
B_{n}^{!}$ and studied the relations among the categories mod$_{A}$, gr$%
_{B_{n}}$ and gr$_{^{B_{n}^{!}}}$. Here we study the relations among the
algebras $A_{n}$, $B_{n}$ and $B_{n}^{!}$ at the level of derived categories.

For the benefit of the reader we start recalling some of the results in our
first paper [MMo1]. The homogeneous G-algebra $B_{n}$ has a Poincare Birkoff
basis, $B_{n}$ is two sided noetherian of both, global and Gelfand Kirillov
dimension n+1. The polynomial algebra $\Bbbk $[Z] is contained in the center
of $B_{n}$, and $B_{n}$ is a finitely generated $\Bbbk $[Z]-algebra. Denote
by $C_{n}$=$\Bbbk _{q}$[X$_{1}$,X$_{2}$,...X$_{n}$] the quantum polynomial
algebra $C_{n}$ is a Koszul algebra with Yoneda algebra the quantized
exterior algebra $C_{n}^{!}$=$\Bbbk _{q}$[X$_{1}$,X$_{2}$,...X$_{n}$]/$<$X$%
_{i}^{2}>$, these algebras are related to $B_{n}$ and $B_{n}^{!}$ as
follows: $C_{n}$ is isomorphic to the quotient $B_{n}$/Z$B_{n}$ and $%
C_{n}^{!}$ is a subalgebra of $B_{n}^{!}$. Moreover, there is a left (right) 
$C_{n}^{!}$-module decomposition $B_{n}^{!}$=$C_{n}^{!}\oplus $ $C_{n}^{!}$Z
($B_{n}^{!}$=$C_{n}^{!}\oplus $Z$C_{n}^{!}$).

As we mentioned above, the relation of $B_{n}$ with the G-algebra $A_{n}$ is
given by the isomorphism $B_{n}$/(Z-1)$B_{n}\cong A_{n}$.

Since $Z$ is an element in the center of $B_{n}$, we take the graded
localization ($B_{n}$)$_{Z}$. The algebra ($B_{n}$)$_{Z}$ is a strongly
graded $%
\mathbb{Z}
$-graded algebra which has in degree zero an algebra (($B_{n}$)$_{Z}$)$_{0}$
isomorphic to ($B_{n}$)$_{Z}$/(Z-1)($B_{n}$)$_{Z}$, which is in turn
isomorphic to $B_{n}$/(Z-1)$B_{n}\cong A_{n}$. By Dade's theorem, [Da] there
is an exact equivalence of categories Gr$_{\text{(}B_{n}\text{)z}}\cong $Mod$%
_{A_{n}}$, which induces by restriction an exact equivalence of categories
between the corresponding categories of finitely generated modules gr$_{%
\text{(}B_{n}\text{)z}}\cong $mod$_{A_{n}}$. In view of the previous
statements, to understand the module structure over a G- algebra $A_{n}$ it
is enough to study the relations between gr$_{B_{n}}$ and the finitely
generated ($B_{n}$)z-modules, gr$_{\text{(}B_{n}\text{)z}}$, but this is
just the localization process, which is rather known, then to use Koszul
duality to relate gr$_{B_{n}}$ and gr$_{B_{n}^{!}}$. \linebreak

\bigskip\ We recall some basic facts about selfinjective finite dimensional
algebras that will be needed, in the particular situation we are
considering. For more details we refer to the paper by Yamagata [Y] .

Let $\Lambda $ be a finite dimensional selfinjective algebra over a field $%
\Bbbk $. Denote by D($\Lambda _{\Lambda })$=Hom$_{\Bbbk }$($\Lambda $,$\Bbbk 
$) the standard bimodule. There is an isomorphism of left $\Lambda $-modules 
$\varphi $: $\Lambda \rightarrow $D($\Lambda $), which induces by adjunction
a map $\beta ^{\prime }$:$\Lambda \otimes _{\Lambda }\Lambda \rightarrow
\Bbbk .$ By definition, $\beta ^{\prime }$(a$\otimes $b)=$\varphi $(b)(a)$.$%
The composition: $\Lambda \times \Lambda \overset{p}{\rightarrow }\Lambda
\otimes _{\Lambda }\overset{\beta ^{\prime }}{\Lambda \rightarrow }\Bbbk $, $%
\beta =\beta ^{\prime }p$ is a non degenerated $\Lambda $-bilinear form, and 
$\Lambda \otimes _{\Lambda }\Lambda $ is the cokernel of the map $\Lambda
\otimes _{\Bbbk }\Lambda \otimes _{\Bbbk }\Lambda \rightarrow \Lambda
\otimes _{\Bbbk }$ $\Lambda $ given by a$\otimes $b$\otimes $c$\rightarrow $%
ab$\otimes $c-a$\otimes $bc. Let $\pi $:$\Lambda \otimes _{\Bbbk }\Lambda
\rightarrow \Lambda \otimes _{\Lambda }\Lambda $ be the cokernel map.

The map $\beta ^{\prime }$ induces a map $\overline{\beta }$:$\Lambda
\otimes _{\Bbbk }\Lambda \rightarrow \Bbbk $ by $\overline{\beta }$(x$%
\otimes $y)=$\beta ^{\prime }$(y$\otimes $x). Hence $\overline{\beta }$ is
also non degenerated. In consequence, there is a $\Bbbk $-linear isomorphism 
$\psi $:$\Lambda \rightarrow $D($\Lambda $) given by: $\psi $(a$_{1}$)(a$%
_{2} $)=$\overline{\beta }$(a$_{2}\otimes $a$_{1}$)=$\beta $(a$_{1}\otimes $a%
$_{2} $)=$\varphi $(a$_{2}$)(a$_{1}$).

Set $\sigma $=$\psi ^{-1}\varphi $. There is a chain of equalities:

$\beta $($\sigma $(y),x)=$\beta $($\psi ^{-1}\varphi $(y),x)=$\psi \psi
^{-1}\varphi $(y)(x)=$\varphi $(y)(x)=$\overline{\beta }$(y$\otimes $x).

The map $\sigma $:$\Lambda \rightarrow \Lambda $ is an isomorphism of $\Bbbk 
$-algebras.

$\beta $($\sigma $(y$_{1}$y$_{2}$)$\otimes $z)=$\beta $(z$\otimes $y$_{1}$y$%
_{2}$)=$\beta $(zy$_{1}\otimes $y$_{2}$)=$\overline{\beta }$(y$_{2}$,zy$_{1}$%
)=$\beta $($\sigma $(y$_{2}$),zy$_{1}$)=$\beta $($\sigma $(y$_{2}$)z,y$_{1}$%
)\linebreak =$\overline{\beta }$(y$_{1}$,$\sigma $(y$_{2}$)z)=$\beta $($%
\sigma $(y$_{1}$),$\sigma $(y$_{2}$)z)=$\beta $($\sigma $(y$_{1}$)$\sigma $(y%
$_{2}$),z).

Since z is arbitrary and $\beta $ non degenerated $\sigma $(y$_{1}$y$_{2}$)=$%
\sigma $(y$_{1}$)$\sigma $(y$_{2}$).

Let D($\Lambda $)$_{\sigma }$ ($_{\sigma ^{-1}}$D($\Lambda $)) be the $%
\Lambda $-$\Lambda $ bimodule with right (left) multiplication shifted by $%
\sigma $ ($\sigma ^{-1}$). Then $\varphi $:$\Lambda \rightarrow $D($\Lambda $%
)$_{\sigma }$ and $\psi $:$\Lambda \rightarrow _{\sigma ^{-1}}$D($\Lambda $)
are isomorphisms of $\Lambda $- $\Lambda $ bimodules.

$\varphi $(xa)(y)=$\beta $(y,xa)=$\beta $($\sigma $(x)$\sigma $(a),y)=$\beta 
$($\sigma $(x),$\sigma $(a)y)=$\beta $($\sigma $(a)y,x)=

$\varphi $(x)($\sigma $(a)y)=$\varphi $(x)$\sigma $(a)(y), for all y.
Therefore $\varphi $(xa)=$\varphi $(x)$\sigma $(a). As claimed.

In a similar way, $\psi $(xb)(y)=$\varphi $(y)(xb)=$\beta $(xb,y)=$\beta $%
(x,by)=$\varphi $(by)(x)=$\psi $(x)b(y).

Since y is arbitrary, $\psi $(xb)=$\psi $(x)b.

In the other hand, $\varphi $(y)(x)=($\psi $(x)(y)=$\beta $(x,y)=$\beta $($%
\sigma \sigma ^{-1}$(x),y)=$\beta $(y,$\sigma ^{-1}$(x))\newline
=$\varphi $($\sigma ^{-1}$(x))(y). Hence, $\psi $(x)=$\varphi $($\sigma
^{-1} $(x)).

It follows: $\psi $(bx)=$\varphi $($\sigma ^{-1}$(b)$\sigma ^{-1}$(x))=$%
\sigma ^{-1}$(b)$\varphi $($\sigma ^{-1}$(x))=$\sigma ^{-1}$(b)$\psi $(x).

Let $M$ be a finitely generated $\Lambda $-module. Since $_{\sigma }\Lambda
\cong $D($\Lambda $) as bimodule, there are natural isomorphisms:

D($M^{\ast }$)=Hom$_{\Lambda }$(Hom$_{\Lambda }$($M$,$\Lambda $), D($\Lambda 
$))=Hom$_{\Lambda }$(Hom$_{\Lambda }$($M$,$\Lambda $), $_{\sigma }\Lambda $)$%
\cong \sigma M^{\ast \ast }\cong \sigma M$, where $\sigma M$=$M$ as abelian
group and multiplication by $\Lambda $ shifted by $\sigma $.

We look now to the case $\Lambda $ a positively graded selfinjective $\Bbbk $%
-algebra and \linebreak $\varphi $: $\Lambda \rightarrow $D($\Lambda $)[n]
an isomorphism of graded $\Lambda $-modules. Let a, x be elements of $%
\Lambda $ of degrees k and j, respectively. Then $\varphi $(xa)=$\varphi $(x)%
$\sigma $(a) is an homogeneous element of degree k+j and $\sigma $(a)=$%
\Sigma \sigma $(a)$_{i}$, with $\sigma $(a)$_{i}$ homogeneous elements of
degree i. Hence $\varphi $(x)$\sigma $(a)$_{i}$=0 for all i$\neq $j. But $%
\varphi $(x)$\sigma $(a)$_{i}$(1)=$\varphi $(x)($\sigma $(a)$_{i}$)=$\beta $(%
$\sigma $(a)$_{i}$,x)=0 for all homogeneous elements x. Hence $\beta $($%
\sigma $(a)$_{i}$,$\Lambda $)=0 and $\sigma $(a)$_{i}$=0 for i$\neq $j.

We have proved $\sigma $ is an isomorphism of graded $\Bbbk $-algebras,
which induces isomorphisms of graded $\Lambda $-$\Lambda $ bimodules: $%
\varphi $:$\Lambda \rightarrow $D($\Lambda $)$_{\sigma }$[n], and $\psi $:$%
\Lambda \rightarrow _{\sigma ^{-1}}$D($\Lambda $)[n]$.$

We only need to check $\psi $ is an isomorphism of graded $\Bbbk $-vector
spaces.

Being $\varphi $ a graded map, $\varphi $=\{$\varphi _{\ell }$\} and $%
\varphi _{\ell }$:$\Lambda _{\ell }\rightarrow $Hom$_{\Bbbk }$($\Lambda
_{n-\ell }$,$\Bbbk $)[n] isomorphisms, inducing maps $\beta _{\ell }^{\prime
}$:$\Lambda _{n-\ell }\otimes \Lambda _{\ell }\rightarrow \Bbbk $ and $%
\overline{\beta }_{\ell }$:$\Lambda _{\ell }\otimes _{\Bbbk }\Lambda
_{n-\ell }\rightarrow \Bbbk $, with $\overline{\beta }_{\ell }$(x$\otimes $%
y)=$\beta _{\ell }^{\prime }$(y$\otimes $x)$.$\linebreak Each $\overline{%
\beta }_{\ell }$induces maps $\psi _{\ell }\Lambda $:$_{n-\ell }\rightarrow $%
D($\Lambda _{\ell }$)[n] such that $\psi $=\{$\psi _{\ell }$\} becomes a
graded map.

In case $M$ is a finitely generated graded left $\Lambda $-module there is a
chain of isomorphisms of graded $\Lambda $-modules:

D($M^{\ast }$)=Hom$_{\Lambda }$(Hom$_{A}$($M$,$\Lambda $),D($\Lambda $))=Hom$%
_{\Lambda }$(Hom$_{\Lambda }$($M$,$\Lambda $),$_{\sigma }\Lambda $[-n])$%
\cong \sigma M^{\ast \ast }$[-n]$\cong $\linebreak $\sigma M$[-n].

Assume now $\Lambda $ is Koszul sefinjective with Nakayama automorphism $%
\sigma $ and Yoneda algebra $\Gamma $. It was remarked in [M] that under
these conditions there is natural action of $\sigma $ as a graded
automorphism of $\Gamma $, we will recall now this construction.

Let x be an element of Ext$_{\Lambda }^{n}$($\Lambda _{0}\Lambda $,$_{0}$)=$%
\underset{i,j}{\oplus }$Ext$_{\Lambda }^{n}$($S_{i}$,$S_{j}$), x=(x$_{i,j}$)
with x$_{i,j}$ the extension:

\begin{center}
x$_{i,j}$:$0\rightarrow S_{j}$[-n]$\rightarrow E_{1}\rightarrow
E_{2}\rightarrow ...\rightarrow E_{n}\rightarrow S_{i}\rightarrow 0.$
\end{center}

Then $\sigma $x$_{i,j}$ is the extension:

\begin{center}
$\sigma $x$_{i,j}$:$0\rightarrow \sigma S_{j}$[-n]$\rightarrow \sigma
E_{1}\rightarrow \sigma E_{2}\rightarrow ...\rightarrow \sigma
E_{n}\rightarrow \sigma S_{i}\rightarrow 0.$
\end{center}

Since $\sigma $ is a permutation of the graded simple, $\sigma $x=($\sigma $x%
$_{i,j}$) is an element of Ext$_{\Lambda }^{n}$($\Lambda _{0}$,$\Lambda
_{0}) $ and $\sigma :$Ext$_{\Lambda }^{n}$($\Lambda _{0}$,$\Lambda _{0}$)$%
\rightarrow $Ext$_{\Lambda }^{n}$($\Lambda _{0}$,$\Lambda _{0}$) is an
isomorphism of $\Bbbk $-vector spaces which extends to a graded automorphism
of $\Gamma $=$\underset{n\geq 0}{\oplus }$Ext$_{\Lambda }^{n}$($\Lambda _{0}$%
,$\Lambda _{0}$).

Let $M$ be a finitely generated (graded) $A$-module and x=(x$_{j}$)$\in $Ext$%
_{{}}^{n}$($M$,$\Lambda _{0}$)\linebreak =$\underset{j\geq 0}{\oplus }$Ext$%
_{\Lambda }^{n}$($M$,$S_{j}$).

\begin{center}
\ x$_{j}$: $0\rightarrow S_{j}$[-n]$\rightarrow E_{1}\rightarrow
E_{2}\rightarrow ...\rightarrow E_{n}\rightarrow M\rightarrow 0.$
\end{center}

Then $\sigma $x=($\sigma $x$_{j}$) with

\begin{center}
$\sigma $x$_{j}:$ $0\rightarrow \sigma S_{j}$[-n]$\rightarrow \sigma
E_{1}\rightarrow \sigma E_{2}\rightarrow ...\rightarrow \sigma
E_{n}\rightarrow \sigma M\rightarrow 0.$
\end{center}

In this case there is an isomorphism of $\Bbbk $-vector spaces: \newline
$\sigma $: Ext$_{\Lambda }^{n}$($M$,$\Lambda _{0}$)$\rightarrow $Ext$%
_{\Lambda }^{n}$($\sigma M$,$\Lambda _{0}$), which induces a graded
isomorphism: \newline
$\sigma $:$\underset{n\geq 0}{\oplus }$Ext$_{\Lambda }^{n}$($M$,$\Lambda
_{0} $)$\rightarrow \underset{n\geq 0}{\oplus }$Ext$_{\Lambda }^{n}$($\sigma
M$,$\Lambda _{0}$).

We also call the Nakayama automorphism to the automorphism $\sigma $ of $%
\Gamma $.

We now look in more detail to the Nakayama automorphism $\sigma $ of the
shriek algebra $B_{n}^{!}$ of the homogeneous algebra $B_{n}$.

The graded ring $B_{n}^{!}$ has a decomposition: ($B_{n}^{!}$)$_{0}$=($%
C_{n}^{!}$)$_{0}$, ($B_{n}^{!}$)$_{1}$=($C_{n}^{!}$)$_{1}\oplus $Z($%
C_{n}^{!} $)$_{0}$, ($B_{n}^{!}$)$_{2}$=($C_{n}^{!}$)$_{2}\oplus $Z($%
C_{n}^{!}$)$_{1}$, .... ($B_{n}^{!}$)$_{i}$=($C_{n}^{!}$)$_{i}\oplus $Z($%
C_{n}^{!}$)$_{i-1}$,... ($B_{n}^{!}$)$_{n}$=($C_{n}^{!}$)$_{n}\oplus $Z($%
C_{n}^{!}$)$_{n-1}$, ($B_{n}^{!}$)$_{n+1}$=Z($C_{n}^{!}$)$_{n}$

The algebra $C_{n}^{!}$ is the quantized exterior algebra in n variables,
hence

dim$_{\Bbbk }$($C_{n}^{!}$)$_{j}$=$\left( 
\begin{array}{c}
\text{n} \\ 
\text{j}%
\end{array}%
\right) $=$\left( 
\begin{array}{c}
\text{n} \\ 
\text{n-j}%
\end{array}%
\right) $=dim$_{\Bbbk }$($C_{n}^{!}$)$_{n-j}.$

Since ($B_{n}^{!}$)$_{j}$=($C_{n}^{!}$)$_{j}\oplus $Z($C_{n}^{!}$)$_{j-1}$,
it follows:

dim$_{\Bbbk }$($B_{n}^{!}$)$_{j}$=$\left( 
\begin{array}{c}
\text{n} \\ 
\text{j}%
\end{array}%
\right) $+$\left( 
\begin{array}{c}
\text{n} \\ 
\text{j-1}%
\end{array}%
\right) $=$\left( 
\begin{array}{c}
\text{n} \\ 
\text{n+1-j}%
\end{array}%
\right) $+$\left( 
\begin{array}{c}
\text{n} \\ 
\text{n-j}%
\end{array}%
\right) $=dim$_{\Bbbk }$($B_{n}^{!}$)$_{n+1-j}$.

The graded left module D( ($B_{n}^{!}$) decomposes in homogeneous components:

D( ($B_{n}^{!}$)=D(($B_{n}^{!}$)$_{n+1}$)+D(($B_{n}^{!}$)$_{n}$)+...D( ($%
B_{n}^{!}$)$_{0}$).

Each component ($B_{n}^{!}$)$_{i}$ has as basis paths of length i, either of
the form

X$_{j_{1}}$X$_{j_{2}}$...X$_{j_{i-1}}$Z, or of the form X$_{j_{1}}$X$%
_{j_{2}} $...X$_{j_{i}}$, and D((B$_{n}^{!}$)$_{\text{n+1-i}}$) has as basis
the dual basis of paths of length n+1-i.

The isomorphism of graded left modules: $\varphi $: $B_{n}^{!}\rightarrow $D(%
$B_{n}^{!}$)[-n-1], sends a path of, either of the form $\gamma $=X$_{j_{1}}$%
X$_{j_{2}}$...X$_{j_{i-1}}$Z, or of the form

$\gamma $=X$_{j_{1}}$X$_{j_{2}}$...X$_{j_{i}}$, to the dual basis $f_{\rho 
\text{-}\gamma }$ of the path $\rho $-$\gamma $ of length n+1-i, with $\rho $
the path of maximal length $\rho $=X$_{1}$X$_{2}$...X$_{n}$Z.

Since ($B_{n}^{!}$)$_{i}$=($C_{n}^{!}$)$_{i}\oplus $Z($C_{n}^{!}$)$_{i-1}$,
the isomorphisn $\phi $ restricts to isomorphisms of $\Bbbk $-vector spaces $%
\varphi $:($C_{n}^{!}$)$_{i}\rightarrow $D(Z($C_{n}^{!}$)$_{n-i-1}$) and $%
\varphi $:(Z$C_{n}^{!}$)$_{i-1}\rightarrow $D(($C_{n}^{!}$)$_{n-i}$), hence, 
$\varphi $ induces isomorphisms of $C_{n}^{!}$-modules $\varphi $:($%
C_{n}^{!} $)$\rightarrow $D(Z($C_{n}^{!}$)) and $\varphi $ : (Z$C_{n}^{!}$)$%
\rightarrow $D(($C_{n}^{!}$).

Now the isomorphism $\psi :B_{n}^{!}\rightarrow $D($B_{n}^{!}$)[-n-1] given $%
\psi $(b$_{1}$)(b$_{2}$)=$\varphi $(b$_{2}$)(b$_{1}$) is such that for c$%
_{1}\in $($C_{n}^{!}$)$_{i}$ and b$\in $B$_{n}$,b=$\overset{n+1}{\underset{%
i=0}{\sum }}$b$_{i}$, $\psi $(c$_{1}$)(b)=$\overset{n+1}{\underset{k=0}{\sum 
}}\varphi $(b$_{k}$)(c$_{1}$), since for all k$\neq $i the length of b$_{k}$
is different from n+1-i, then $\varphi $(b$_{k}$)(c$_{1}$)=0, $\psi $(c$_{1}$%
)$\in $D($Z$(C$_{n}^{!}$)$_{\text{n-i-1}}$), hence, $\psi $ induces an
isomorphism of graded $C_{n}^{!}$-modules $\psi $ : ($C_{n}^{!}$)$%
\rightarrow $D($Z$($C_{n}^{!}$)) and in a similar way an isomorphism $\psi $%
:(Z$C_{n}^{!}$)$\rightarrow $D(($C_{n}^{!}$). It follows the Nakayama
automorphism $\sigma $ restricts to an automorphisms of graded rings: $%
\sigma $ : $C_{n}^{!}\rightarrow C_{n}^{!}$ and of $C_{n}^{!}$-bimodules $%
\sigma $ : $ZC_{n}^{!}\rightarrow ZC_{n}^{!}$.

Any automorphism $\sigma $ of a ring $\Lambda $ takes the center to the
center, since $z\in Z(\Lambda )$ implies that for any b$\in \Lambda $, $%
\sigma $(Zb)=$\sigma $(Z)$\sigma $(b)=$\sigma $(bZ)=$\sigma $(b)$\sigma $(Z).

In case $\Lambda $ is the homogeneous Weyl algebra it is not difficult to
see that the center of $\Lambda $ is $\Bbbk \lbrack Z]$ and since $\sigma $%
(Z) is an homogeneous element of degree one in Z($B$)=$\Bbbk $[Z], it is
clear $\sigma $(Z)=kZ with k a non zero element of the field $\Bbbk $%
.\linebreak

\section{The derived Categories D$^{b}$(Qgr$_{B_{n}}$) and D$^{b}$(gr$%
_{(B_{n})_{Z}}$).}

We describe now the main results of the paper: let's consider a homogeneous
G-algebra $B_{n}$, and let Qgr$_{B_{n}}$ be the quotient category of gr$%
_{B_{n}}$, module the objects of finite length, this is: the category of
"tails" consider in [AZ],[G], [P]. Qgr$_{B_{n}}$ is an abelian category and
there is an exact functor $\pi $: gr$_{B_{n}}\rightarrow $Qgr$_{B_{n}}$. It
was proved in [MM],[MS] that there is an exact duality \underline{gr}$%
_{B_{n}^{!}}\cong $D$^{b}$(Qgr$_{B_{n}}$), with D$^{b}$(Qgr$_{B_{n}}$) the
derived category of bounded complex of objects in Qgr$_{B_{n}}$, we must
consider the relations between the categories D$^{b}$(Qgr$_{B_{n}}$) and D$%
^{b}$(gr$_{\text{(}B_{n}\text{)z}}$), which in view of the results of [MMo],
the last one is equivalent as triangulated categories to D$^{b}$(mod$%
_{A_{n}} $ ). We prove that there exists an exact dense functor: D$^{b}$($%
\psi $): D$^{b}$(Qgr$_{B_{n}}$) $\rightarrow $D$^{b}$(gr$_{\text{(}B_{n}%
\text{)z}}$) with kernel $\mathcal{T}$=KerD$^{b}$($\psi $)=\{$\pi $($%
M^{\circ }$)$\mid M^{\circ }\in $D$^{b}$(gr$_{B_{n}}$) for all i, H$^{i}$( $%
M^{\circ }$) is of Z-torsion\}. The main theorem of the section is that
there exists an equivalence of triangulated categories D$^{b}$(Qgr$_{B_{n}}$%
) /$\mathcal{T}\cong $D$^{b}$(gr$_{\text{(}B_{n}\text{)z}}$), where the
category $\mathcal{T}$ is an "epasse" subcategory of D$^{b}$(Qgr$_{B_{n}}$)
and D$^{b}$(Qgr$_{B_{n}}$) /$\mathcal{T}$ is the Verdier quotient.[Mi]

In section three, together with the quotient of categories considered in
section two, we consider a full embedding of a subcategory $\mathcal{F}$ of D%
$^{b}$(Qgr$_{B_{n}}$) in D$^{b}$(gr$_{\text{(}B_{n}\text{)z}}$). Here $%
\mathcal{F}$ is the full subcategory of D$^{b}$(Qgr$_{B_{n}}$) consisting of
the $\mathcal{T}$-local objects [Mi], this is: $\mathcal{F}$=\{ $X^{\circ
}\mid $Hom$_{D^{b}(Qgr_{B_{n}})}$ ($\mathcal{T}$, $X^{\circ }$)=0\}.

Using the duality $\overline{\phi }$: \underline{gr}$_{B_{n}^{!}}\rightarrow 
$D$^{b}$(Qgr$_{B_{n}}$) we obtain a pair of triangulated subcategories $(%
\mathcal{F}^{\prime },\mathcal{T}^{\prime })$ of \underline{gr}$_{B_{n}^{!}}$
such that $\mathcal{F}^{\prime }\rightarrow \mathcal{F}$ and $\mathcal{T}%
^{\prime }\rightarrow \mathcal{T}$ under the duality $\phi $. We obtain the
following characterization of the subcategories $\mathcal{T}^{\prime }$ and $%
\mathcal{F}^{\prime }$ of \underline{gr}$_{B^{!}}$: $\mathcal{T}^{\prime }$
is the smallest triangulated subcategories of \underline{gr}$_{B_{n}^{!}}$
containing the induced modules $M\otimes _{C_{n}^{!}}$ $B_{n}^{!}$ and
closed under the Nakayama automorphism, $\mathcal{T}^{\prime }$ has
Auslander-Reiten triangles and they are of the type $%
\mathbb{Z}
A_{\infty }$. For the category $\mathcal{F}^{\prime }$ we obtain the
following characterization: $\mathcal{F}^{\prime }$ consists of the graded
finitely generated $B_{n}^{!}$-modules whose restriction to $C_{n}^{!}$ is
injective. Furthermore, the category $\mathcal{F}^{\prime }$ is closed under
the Nakayama automorphism, it has Auslander-Reiten triangles and they are of
type $%
\mathbb{Z}
A_{\infty }$.

In order to obtain an equivalence instead of a duality we apply the usual
duality D:\underline{gr}$_{B_{n}^{!}}\rightarrow $\underline{gr}$_{B_{n}^{%
\text{!op}}}$ to obtain subcategories $T$ and $F$ of \underline{gr}$_{B_{n}^{%
\text{!op}}}$ with $T$=D($\mathcal{T}^{\prime }$) and $F$=D($\mathcal{F}%
^{\prime }$) where $T$ is the smallest triangulated subcategory of 
\underline{gr}$_{B_{n}^{\text{!op}}}$ containing the induced modules and $F$
is the full subcategory of \underline{gr}$_{B_{n}^{\text{!op}}}$ consisting
of those modules whose restriction to $C_{n}^{!}$ is projective.

Finally we obtain the main results of the paper: there is an equivalence of
triangulated categories \underline{gr}$_{B_{n}^{!op}}$/ $T\cong $D$^{b}$(mod$%
_{A_{n}}$) and there is a full embedding of triangulated categories $%
F\rightarrow $D$^{b}$(mod$_{A_{n}}$).

We start recalling the construction of the categories of "tails" QGr$%
_{B_{n}} $ and Qgr$_{B_{n}}.$

Let $M$ be a graded $B_{n}$-module, t($M$)=$\underset{L\in J}{\sum }L$ and $%
J $=\{$L\mid $sub module of $M$ dim$_{\Bbbk }L<\infty $\}.

Claim: t($M$/t($M$))=0.

Let $N$ be a finitely generated sub module of $M$ such that $N$+t($M$)/t($M$%
)=\linebreak $N$/$N\cap $t($M$) is finite dimensional over $\Bbbk $. Since $%
B_{n}$ is noetherian $N\cap $t($M$) is a finitely generated submodule of $%
t(M)$, hence of finite dimension over $\Bbbk $. It follows $N$ is finite
dimensional, so $N\subset $t($M$).

Let $N$ be an arbitrary sub module of $M$ with $N$+t($M$)/t($M$) finite
dimensional over $\Bbbk $, then $N$=$\sum N_{i}$, with $N_{i}$ finitely
generated, each $N_{i}$+t($M$)/t($M$) is finite dimensional, therefore $%
N_{i}\subset $t($M$). It follows $N\subset $t($M$) and $t$ is an idempotent
radical.

If we denote by gr$_{B_{n}}$ and gr$_{\text{(}B_{n}\text{)}_{Z}}$ the
categories of finitely generated graded $B_{n}$ and ($B_{n}$)$_{Z}$-modules,
respectively, then the localization functor $Q$ restricts to a functor $Q$:
gr$_{B_{n}}$ $\rightarrow $gr$_{\text{(}B_{n}\text{)}_{Z}}$.

\begin{definition}
Given a $B_{n}$-module $M,$ define the $Z$-torsion of $M$ as: t$_{Z}$($M$%
)=\linebreak \{m$\in $M$\mid $there exists n$>$0 with $Z^{n}m=0$\}. It is
clear t$_{Z}$ is an idempotent radical. We say $M$ is Z-torsion when t$_{Z}$(%
$M$)=$M$ and Z-torsion free if t$_{Z}$($M$)=0.
\end{definition}

The kernel of the natural map $M\longrightarrow M_{Z}$ is t$_{Z}$($M$).

\begin{definition}
We say that a (graded) $B_{n}$-module is torsion if t($M$)=$M$ and torsion
free if t($M$)=0.
\end{definition}

It is clear t($M$) is Z-torsion and t($M$)$\subset $t$_{Z}$($M$). Therefore
if $M$ is torsion then it is Z -torsion and if $M$ is Z-torsion free then it
is torsion free.

The torsion free modules form a Serre (or thick) subcategory of Gr$_{B_{n}}$%
we localize with respect to this subcategory as explained in [Ga],[P].
Denote by QGr$_{B_{n}}$ the quotient category and let $\pi $:Gr$%
_{B_{n}}\rightarrow $QGr$_{B_{n}}$ be the quotient functor, QGr$_{B_{n}}$%
=\linebreak Gr$_{B_{n}}$/Torsion, is an abelian category with enough
injectives and $\pi $ is an exact functor. When taking this quotient we are
inverting the maps of $B_{n}$-graded modules, f: $M\rightarrow N$ such that
Kerf and Cokerf are torsion.

The category QGr$_{B_{n}}$ has the same objects as Gr$_{B_{n}}$ and maps:

Hom$_{\text{QG}r_{B_{n}}}$($\pi $($M$),$\pi $($N$))=$\underrightarrow{\text{%
lim}}$Hom$_{\text{Gr}_{B_{n}}}$($M^{\prime }$,$N$/t($N$)), the limit running
through all the sub modules $M^{\prime }$of $M$ such that $M$/$M^{\prime }$
is torsion.

If $M$ is a finitely generated module, then the limit has a simpler form:

Hom$_{\text{QG}r_{B_{n}}}$($\pi $($M$),$\pi $($N$))=$\underset{\text{k}}{%
\underrightarrow{\text{lim}}}$Hom$_{\text{Gr}_{B_{n}}}$($M_{\geq k}$,$N$/t($%
N $))$.$

In case $N$ is torsion free: Hom$_{\text{QG}r_{B_{n}}}$($\pi $($M$),$\pi $($%
N $))=$\underset{\text{k}}{\underrightarrow{\text{lim}}}$Hom$_{\text{Gr}%
_{B_{n}}}$($M_{\geq k}$,$N$).

The functor $\pi $: Gr$_{B_{n}}\rightarrow $QGr$_{B_{n}}$ has a right
adjoint: $\varpi $: QGr$_{B_{n}}\rightarrow $Gr$_{B_{n}}$such that $\pi
\varpi \cong $1. [P].

If we denote by gr$_{B_{n}}$ the category of finitely generated graded \ $%
B_{n}$-modules, and by Qgr$_{B_{n}}$ the full subcategory of QGr$_{B_{n}}$
consisting of the objects $\pi $($N$) with $N$ finitely generated, then the
functor $\pi $ induces by restriction a functor: $\pi $:gr$%
_{B_{n}}\rightarrow $Qgr$_{B_{n}}$. The kernel of $\pi $ is: Ker$\pi $=\{$%
M\in $ gr$_{B_{n}}\mid \pi $($M$)=0\}=\{$M\in $ gr$_{B_{n}}\mid $t($M$)=$M$%
\}.

In the other hand, the functor

$Q$=($B_{n}$)$_{Z}\underset{B}{\otimes }$-: gr$_{B_{n}}\rightarrow $gr$_{%
\text{(}B_{n}\text{)}_{Z}}$ has kernel \{M$\in $gr$_{B_{n}}\mid $M$_{Z}$%
=0\}=\{M$\in $gr$_{B_{n}}\mid $t$_{Z}$($M$)=$M$\}.

It follows: Ker$\pi \subset $Ker(($B_{n}$)$_{Z}\underset{B}{\otimes }$-).

According to [P] (pag. 173 Cor. 3.11) there exists a unique functor $\psi $
such that the following diagram commutes:

\begin{center}
$%
\begin{array}{cccccc}
\text{gr}_{B_{n}} &  & \overset{\pi }{\rightarrow } &  & \text{Qgr}_{B_{n}}
&  \\ 
\text{(}B_{n}\text{)}_{Z}\underset{B}{\otimes }\text{-} & \searrow &  & 
\swarrow & \psi &  \\ 
&  & \text{gr}_{\text{(}B_{n}\text{)}_{Z}} &  &  & 
\end{array}%
$
\end{center}

This is: $\psi \pi $=($B_{n}$)$_{Z}\underset{B}{\otimes }$-.

\begin{proposition}
The functor $\psi $: Qgr$_{B_{n}}\rightarrow $gr$_{\text{(}B_{n}\text{)}%
_{Z}} $ is exact.
\end{proposition}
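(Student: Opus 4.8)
The plan is to deduce exactness of $\psi $ from the exactness of the two functors in the commutative triangle, namely $\pi $ and $Q=(B_{n})_{Z}\otimes _{B}-$, exploiting that $\pi $ is the Serre quotient functor and that $Q$ factors through it. First I would record that $Q$ is exact: the set $\{Z^{i}\mid i\geq 0\}$ is a central multiplicative subset of $B_{n}$, so the graded localization $(B_{n})_{Z}$ is a flat $B_{n}$-module on both sides, whence $M\mapsto M_{Z}=(B_{n})_{Z}\otimes _{B}M$ carries short exact sequences of graded $B_{n}$-modules to short exact sequences of graded $(B_{n})_{Z}$-modules and preserves finite generation. Consequently, if $f\colon M\rightarrow N$ is a morphism of $\mathrm{gr}_{B_{n}}$ with $\mathrm{Ker}\,f,\mathrm{Coker}\,f\in \mathrm{Ker}\,\pi $, then these are finite dimensional, hence (as $Z$ has positive degree) annihilated by a power of $Z$, hence killed by $Q$; exactness of $Q$ then forces $Q(f)=\psi \pi (f)$ to be an isomorphism.

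Next I would lift a short exact sequence from $\mathrm{Qgr}_{B_{n}}$ to $\mathrm{gr}_{B_{n}}$. Let $\xi \colon 0\rightarrow X\rightarrow Y\rightarrow W\rightarrow 0$ be exact in $\mathrm{Qgr}_{B_{n}}$. Since $\pi $ is dense, $Y\cong \pi (N)$ for some $N\in \mathrm{gr}_{B_{n}}$; using the explicit description of $\mathrm{Hom}_{\mathrm{Qgr}_{B_{n}}}(\pi (N),\pi (P))$ recalled above, the epimorphism $Y\rightarrow W\cong \pi (P)$ is represented by an honest morphism of finitely generated graded modules, and the noetherianity of $B_{n}$ keeps kernels and quotients finitely generated. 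In this way one obtains a short exact sequence $\widetilde{\xi }\colon 0\rightarrow M\rightarrow N\rightarrow P\rightarrow 0$ in $\mathrm{gr}_{B_{n}}$ together with an isomorphism $\pi (\widetilde{\xi })\cong \xi $; this is the standard lifting of short exact sequences through the quotient functor of a locally noetherian category ([Ga], [P]). Alternatively, one may simply invoke that an exact functor killing the localizing subcategory induces an exact functor on the Serre quotient, which combined with the uniqueness in [P] (pag. 173, Cor. 3.11) already identifies $\psi $ with that exact functor.

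Applying the relation $\psi \pi =Q$ and the exactness of $Q$ then finishes it: $\psi (\xi )\cong \psi (\pi (\widetilde{\xi }))=Q(\widetilde{\xi })$, which is exact in $\mathrm{gr}_{(B_{n})_{Z}}$; hence $\psi $ preserves short exact sequences, i.e.\ it is exact.

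I expect the only real obstacle to be the lifting step: one must be careful to produce $\widetilde{\xi }$ inside the category of \emph{finitely generated} graded modules, and to check that the natural isomorphisms $\psi \pi (-)\cong (-)_{Z}$ are compatible with morphisms, so that $\psi $ applied to the maps of $\xi $ is genuinely identified with the localized maps of $\widetilde{\xi }$. The remaining ingredients — flatness of $(B_{n})_{Z}$, the inclusion $\mathrm{Ker}\,\pi \subseteq \mathrm{Ker}\,Q$ already noted in the text, and the exactness of $Q$ — are immediate.
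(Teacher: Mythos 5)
Your proposal is correct and follows essentially the same route as the paper: lift the short exact sequence from $\mathrm{Qgr}_{B_{n}}$ to $\mathrm{gr}_{B_{n}}$ by representing the maps by honest morphisms on suitable truncations (the paper carries out precisely the truncation argument you flag as the delicate step), and then conclude from $\psi \pi =Q$ together with the exactness of the central localization $Q=(B_{n})_{Z}\otimes _{B}-$. Your side remark that exactness also follows formally, since the exact functor $Q$ kills the Serre subcategory and therefore induces an exact functor on the quotient, is a legitimate shortcut not taken in the paper, but your main argument coincides with the paper's proof.
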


\begin{proof}
Let $0\rightarrow \pi $($M$)$\overset{\overset{\wedge }{f}}{\rightarrow }\pi 
$($N$)$\overset{\overset{\wedge }{g}}{\rightarrow }\pi $($L$)$\rightarrow 0$
be an exact sequence in Qgr$_{B_{n}}$. We may assume $M$,$N$,$L$ torsion
free. Then::

$\overset{\wedge }{\text{f}}\in \underset{k}{\underrightarrow{\text{lim}}}$%
Hom$_{\text{Gr}B_{n}}$($M_{\geq k}$,$N$) and $\overset{\wedge }{\text{g}}\in 
\underset{s}{\underrightarrow{\text{lim}}}$Hom$_{\text{Gr}B_{n}}$($N_{\geq
s} $,$L$).

There exist exact sequences: $0\rightarrow M_{\geq k+1}\rightarrow M_{\geq
k}\rightarrow M_{\geq k}/M_{\geq k+1}\rightarrow 0$ which induces an exact
sequence:

$0\rightarrow $Hom$_{\text{Gr}_{B_{n}}}$($M_{\geq k}$/$M_{\geq k+1}$,$N$)$%
\rightarrow Hom_{\text{Gr}_{B_{n}}}$($M_{\geq k}$,$N$)$\rightarrow $Hom$_{%
\text{Gr}_{B_{n}}}$($M_{\geq k+1}$,$N$).

Since we are assuming $N$ is torsion free Hom$_{\text{Gr}_{B_{n}}}$($M_{\geq
k}$/$M_{\geq k+1}$,$N$)=0.

Hence $\overset{\wedge }{\text{f}}\in $Hom$_{\text{Gr}_{B_{n}}}$($\pi $($M$),%
$\pi $($N$))=$\underset{k\geq 0}{\cup }$Hom$_{\text{Gr}_{B_{n}}}$($M_{\geq
k} $,$N$).

The map $\overset{\wedge }{\text{f }}$is represented by f: $M_{\geq
k}\rightarrow N.$ Similarly, $\overset{\wedge }{\text{g}}$ is represented by
a map g: $N_{\geq \ell }\rightarrow L$ and we have a sequence: $M_{\geq
k+\ell }\overset{f}{\rightarrow }N_{\geq \ell }\overset{g}{\rightarrow }L$
with $\overset{\wedge }{\text{(gf)}}$=$\overset{\wedge }{\text{g}}\overset{%
\wedge }{\text{f}}$=0, which implies gf factors through a torsion module,
but $L$ torsion free implies gf=0. Since $M_{\geq k+\ell }$ is torsion free,
f is a monomorphism. If Cokerg is torsion, there exists an s$\geq $0 such
that Cokerg$_{\geq s}$=0. Taking a large enough truncation we obtain a
sequence: $M_{\geq s}\overset{f}{\rightarrow }N_{\geq s}\overset{g}{%
\rightarrow }L_{\geq s}$ with f a monomorphism, g an epimorphism and gf=0.

Consider the exact sequences: $0\rightarrow $ $M_{\geq s}\overset{f^{\prime
\prime }}{\rightarrow }$Kerg$\rightarrow H\rightarrow 0$,

$0\rightarrow $Kerg$\rightarrow N_{\geq s}\rightarrow L_{\geq s}\rightarrow
0.$

Applying $\pi $ we obtain the following isomorphism of exact sequences:

$%
\begin{array}{ccccccc}
0\rightarrow & \pi \text{(}M_{\geq s}\text{)} & \overset{\pi f^{\prime
\prime }}{\rightarrow } & \pi \text{(Kerg)} & \rightarrow & \pi \text{(}H%
\text{)} & \rightarrow 0 \\ 
& \downarrow \cong &  & \downarrow \cong &  &  &  \\ 
0\rightarrow & \pi \text{(}M\text{)} & \overset{\overset{\wedge }{f}}{%
\rightarrow } & \text{Ker}\overset{\wedge }{\text{g}} & \rightarrow & 0 & 
\end{array}%
$

It follows $\pi $($H$)=0 and $H$ is torsion, so there exists an integer t$%
\geq $0 such that $H_{\geq t}$=0. Finally taking a large enough truncation
we get an exact sequence:

$0\rightarrow M_{\geq s}\overset{f}{\rightarrow }N_{\geq s}\overset{g}{%
\rightarrow }L_{\geq s}\rightarrow 0$ such that the following sequences are
isomorphic:

$%
\begin{array}{ccccccc}
0\rightarrow & \pi \text{(}M_{\geq s}\text{)} & \overset{\pi f}{\rightarrow }
& \pi \text{(}N_{\geq s}\text{)} & \overset{\pi g}{\rightarrow } & \pi \text{%
(}L_{\geq s}\text{)} & \rightarrow 0 \\ 
& \downarrow \cong &  & \downarrow \cong &  & \downarrow \cong &  \\ 
0\rightarrow & \pi \text{(}M\text{)} & \overset{\overset{\wedge }{f}}{%
\rightarrow } & \pi \text{(}N\text{)} & \overset{\overset{\wedge }{g}}{%
\rightarrow } & \pi \text{(}L\text{)} & \rightarrow 0%
\end{array}%
$

Applying $\psi $ we have an exact sequence: $0\rightarrow \psi \pi $($M$)$%
\overset{\psi \overset{\wedge }{f}}{\rightarrow }\psi \pi $($N$)$\overset{%
\psi \overset{\wedge }{g}}{\rightarrow }\psi \pi $($L$)$\rightarrow 0$,
which is isomorphic to $0\rightarrow $($M_{\geq s}$)$_{Z}\overset{f_{Z}}{%
\rightarrow }$($N_{\geq s}$)$_{Z}\overset{g_{Z}}{\rightarrow }$($L_{\geq s}$)%
$_{Z}\rightarrow 0.$

We have proved $\psi $ is exact.
\end{proof}

The functor $\psi $ has a derived functor: D($\psi $):D$^{b}$(Qgr$_{B_{n}}$)$%
\rightarrow $D$^{b}$(gr$_{(B_{n})_{Z}}$), we will study next its properties.

Observe Qgr$_{B_{n}}$ does not have neither enough projective nor enough
injective objects.

\begin{lemma}
Let $0\rightarrow N_{1}\overset{d_{1}}{\rightarrow }N_{2}\overset{d_{2}}{%
\rightarrow }...N_{\ell -1}\overset{d_{\ell -1}}{\rightarrow }N_{\ell
}\rightarrow 0$ be a sequence of $B_{n}$- modules and assume the
compositions d$_{i}$d$_{i-1}$ factors through a module of Z-torsion. Then
there exists a complex:

$0\rightarrow N_{1}\overset{\overset{\wedge }{d_{1}}}{\rightarrow }%
N_{2}\oplus $t$_{Z}$($N_{1}$)$\overset{\overset{\wedge }{d_{2}}}{\rightarrow 
}N_{3}\oplus $t$_{Z}$($N_{2}$)...$N_{\ell -1}$ $\oplus $t$_{Z}$($N_{\ell })%
\overset{\overset{\wedge }{d_{\ell -1}}}{\rightarrow }N_{\ell }\rightarrow 0$%
, where $\overset{\wedge }{\text{d}_{1}}$=$\left[ 
\begin{array}{c}
\text{-d}_{1} \\ 
\text{s}_{1}%
\end{array}%
\right] $, $\overset{\wedge }{\text{d}_{i}}$= $\left[ 
\begin{array}{cc}
\text{(-1)}^{i}\text{d}_{i} & \text{j}_{i+1} \\ 
\text{s}_{i} & \text{(-1)}^{i}\text{d}_{i+1}^{\prime }%
\end{array}%
\right] $ and $\overset{\wedge }{\text{d}_{\ell -1}}$=$\left[ 
\begin{array}{cc}
\text{-d}_{\ell -1} & \text{j}_{\ell }%
\end{array}%
\right] $, and the maps j$_{i}$:t$_{Z}$($N_{i}$)$\rightarrow N_{i}$ are the
natural inclusions.
\end{lemma}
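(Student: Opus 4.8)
The plan is to build the claimed complex by hand and then check directly that consecutive differentials compose to zero; since only a complex (and not an exact sequence) is asserted, this is all that is needed. Structurally this is a mapping-cone type construction: the row $N_{1}\to \cdots \to N_{\ell }$ is a ``complex up to $Z$-torsion'', and the extra summands $t_{Z}(N_{i+1})$ together with correction maps absorb the failure of $d_{i+1}d_{i}$ to vanish.

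First I would isolate two elementary facts about $Z$-torsion, both valid because $Z$ is central. (1) Every $B_{n}$-homomorphism sends $Z$-torsion elements to $Z$-torsion elements, so each $d_{i}$ restricts to a map $d_{i}^{\prime }\colon t_{Z}(N_{i})\rightarrow t_{Z}(N_{i+1})$, and, writing $j_{i}\colon t_{Z}(N_{i})\hookrightarrow N_{i}$ for the inclusion, one has $j_{i+1}d_{i}^{\prime }=d_{i}j_{i}$. (2) If a homomorphism $\varphi \colon X\rightarrow Y$ factors through a $Z$-torsion module, then $\operatorname{im}\varphi $ is a homomorphic image of a $Z$-torsion module, hence itself $Z$-torsion, hence contained in $t_{Z}(Y)$; since $j_{Y}$ is a monomorphism, $\varphi $ factors uniquely as $\varphi =j_{Y}\widetilde{\varphi }$. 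Applying (2) to the hypothesis, each composite $d_{i+1}d_{i}\colon N_{i}\rightarrow N_{i+2}$ factors uniquely as $d_{i+1}d_{i}=j_{i+2}s_{i}$ for a map $s_{i}\colon N_{i}\rightarrow t_{Z}(N_{i+2})$.

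Next I would assemble the complex with terms $P_{1}=N_{1}$, $P_{i}=N_{i}\oplus t_{Z}(N_{i+1})$ for $2\leq i\leq \ell -1$, and $P_{\ell }=N_{\ell }$, and with the block differentials $\widehat{d_{1}},\dots ,\widehat{d_{\ell -1}}$ as displayed in the statement, built from the $\pm d_{i}$, the inclusions $j_{i+1}$, the maps $s_{i}$ and the restrictions $d_{i+1}^{\prime }$; by fact (1) and the construction of $s_{i}$ each block lands in the summand it is placed over, so the $\widehat{d_{i}}$ are well defined. The verification that $\widehat{d_{i+1}}\widehat{d_{i}}=0$ is then a block-matrix computation which, for an interior pair, reduces to four identities: the $N_{i}\rightarrow N_{i+2}$ block is (up to sign) $d_{i+1}d_{i}-j_{i+2}s_{i}$, which vanishes by the definition of $s_{i}$; the $t_{Z}(N_{i+1})\rightarrow N_{i+2}$ block is (up to sign) $d_{i+1}j_{i+1}-j_{i+2}d_{i+1}^{\prime }$, which vanishes by fact (1); and the two blocks landing in $t_{Z}(N_{i+3})$ vanish after composing with the monomorphism $j_{i+3}$, because $j_{i+3}s_{i+1}=d_{i+2}d_{i+1}$ and $j_{i+3}d_{i+2}^{\prime }=d_{i+2}j_{i+2}$ turn both of them into the single expressions $d_{i+2}d_{i+1}d_{i}$, respectively $d_{i+2}d_{i+1}j_{i+1}$, in which the two contributions cancel because of the alternating signs $(-1)^{i}$. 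The two end composites $\widehat{d_{2}}\widehat{d_{1}}$ and $\widehat{d_{\ell -1}}\widehat{d_{\ell -2}}$ obey the same identities with fewer blocks present, and the cases $\ell =2,3$ are immediate.

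I expect the only genuine difficulty to be the sign bookkeeping: one must pin down the sign in $d_{i+1}d_{i}=\pm j_{i+2}s_{i}$ together with the signs $(-1)^{i}$ in the matrices (the last differential carrying the sign $(-1)^{\ell -1}$ on $d_{\ell -1}$) so that every cross-term produced by the block multiplication cancels, and keep straight which $t_{Z}$-summand each matrix entry maps into. Everything else is mechanical once facts (1) and (2) are in hand; in particular, no finiteness or noetherian hypothesis on the $N_{i}$ is required.
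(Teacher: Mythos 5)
Your proposal is correct and follows essentially the same route as the paper's proof: restrict each $d_{i}$ to the torsion submodules to get $d_{i}'$, factor $d_{i+1}d_{i}=j_{i+2}s_{i}$ through $t_{Z}(N_{i+2})$, and use that the inclusions $j_{i}$ are monomorphisms to obtain the commutation identities ($s_{i+1}j_{i+1}=d_{i+2}'d_{i+1}'$, $s_{i+1}d_{i}=d_{i+2}'s_{i}$) that make the block products vanish. The paper only derives the first identity explicitly and declares the rest "easily checked," so your fuller block verification (including reading the terms as $N_{i}\oplus t_{Z}(N_{i+1})$ and putting the sign $(-1)^{\ell-1}$ on the last differential, which corrects typos in the statement) simply fills in what the paper leaves implicit.
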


\begin{proof}
Each morphism d$_{i}$:$N_{i}\rightarrow N_{i+1}$ induces by restriction a
map \linebreak d$_{i}^{\prime }$:t$_{Z}$($N_{i}$)$\rightarrow $t$_{Z}$($%
N_{i+1}$) such that the following diagram commutes:

$%
\begin{array}{ccccccccc}
\text{t}_{Z}\text{(}N_{1}\text{)} & \overset{d_{1}^{\prime }}{\rightarrow }
& \text{t}_{Z}\text{(}N_{2}\text{)} & \overset{d_{2}^{\prime }}{\rightarrow }
& \text{t}_{Z}\text{(}N_{3}\text{)} & \rightarrow \text{...} & \text{t}_{Z}%
\text{(}N_{\ell -1}\text{)} & \overset{d_{\ell -1}^{\prime }}{\rightarrow }
& \text{t}_{Z}\text{(}N_{\ell }\text{)} \\ 
\downarrow \text{j}_{1} &  & \downarrow \text{j}_{2} &  & \downarrow \text{j}%
_{3} &  & \downarrow \text{j}_{\ell -1} &  & \downarrow \text{j}_{\ell } \\ 
N_{1} & \overset{d_{1}}{\rightarrow } & N_{2} & \overset{d_{2}}{\rightarrow }
& N_{3} &  & N_{\ell -1} & \overset{d_{\ell -1}}{\rightarrow } & N_{\ell }%
\end{array}%
$

Since the compositions d$_{i}$d$_{i-1}$ factors through a module of $Z$%
-torsion, there exist maps s$_{i}$:$N_{i}\rightarrow $t$_{Z}$($N_{i+2}$)
such that j$_{i+2}$s$_{i}$=d$_{i+1}$d$_{i}$.

We have the following equalities: j$_{i+2}$s$_{i}$j$_{i}$=d$_{i+1}$d$_{i}$j$%
_{i}$=d$_{i+1}$j$_{i}$d$_{i}^{\prime }$=j$_{i+2}$d$_{i+1}^{\prime }$d$%
_{i}^{\prime }$ and j$_{i+2}$ a monomorphism implies s$_{i}$j$_{i}$=d$%
_{i+1}^{\prime }$d$_{i}^{\prime }$ .

We can easily check $\overset{\wedge }{\text{d}_{i+1}}\overset{\wedge }{%
\text{d}_{i}}$=0.
\end{proof}

\begin{proposition}
Denote by $Q$ the localization functor $Q$=($B_{n}$)$_{Z}\underset{B}{%
\otimes }$- and by $C^{b}$(-), the category of bounded complexes. The
induced functor $C^{b}$($Q$):$C^{b}$(gr$_{_{B_{n}}}$)$\rightarrow $ $C^{b}$%
(gr$_{(B_{n})_{Z}})$ is dense.
\end{proposition}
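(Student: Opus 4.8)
The goal is to show that $C^{b}(Q)$ is essentially surjective up to isomorphism of complexes, so the plan is to first establish the module-level statement and then lift it to complexes. At the module level, every finitely generated graded $(B_{n})_{Z}$-module $N$ is isomorphic to $M_{Z}=Q(M)$ for some finitely generated graded $B_{n}$-module $M$: since $B_{n}\hookrightarrow (B_{n})_{Z}$, the module $N$ is also a graded $B_{n}$-module, and choosing homogeneous $(B_{n})_{Z}$-generators $n_{1},\dots ,n_{r}$ of $N$ and setting $M=\sum_{j}B_{n}n_{j}\subseteq N$ gives a finitely generated graded $B_{n}$-submodule with $M_{Z}=(B_{n})_{Z}\{n_{1},\dots ,n_{r}\}=N$; the inclusion $M\hookrightarrow N$ is injective, so upon localizing (which is exact) it becomes an isomorphism $M_{Z}\overset{\cong }{\rightarrow }N_{Z}=N$.

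Now fix a bounded complex in $C^{b}(\mathrm{gr}_{(B_{n})_{Z}})$ and, after reindexing, write it as $0\rightarrow N_{1}\overset{\delta _{1}}{\rightarrow }N_{2}\overset{\delta _{2}}{\rightarrow }\cdots \overset{\delta _{\ell -1}}{\rightarrow }N_{\ell }\rightarrow 0$. I would build a subcomplex $M^{\bullet}$ with $M_{i}\subseteq N_{i}$ finitely generated and graded, $(M_{i})_{Z}=N_{i}$, and differentials $d_{i}=\delta _{i}|_{M_{i}}$, by downward induction on $i$. Choose $M_{\ell }\subseteq N_{\ell }$ finitely generated graded with $(M_{\ell })_{Z}=N_{\ell }$. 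Given $M_{i+1}$, first choose any finitely generated graded $M_{i}'\subseteq N_{i}$ with $(M_{i}')_{Z}=N_{i}$. Then $\delta _{i}(M_{i}')$ is a finitely generated graded $B_{n}$-submodule of $N_{i+1}$ whose localization is $\delta _{i}(N_{i})\subseteq N_{i+1}=(M_{i+1})_{Z}$, so the finitely generated $B_{n}$-module $(\delta _{i}(M_{i}')+M_{i+1})/M_{i+1}$ localizes to $0$, hence is $Z$-torsion, hence annihilated by a single power $Z^{N}$. As $Z$ is central, $\delta _{i}(Z^{N}M_{i}')=Z^{N}\delta _{i}(M_{i}')\subseteq M_{i+1}$; set $M_{i}=Z^{N}M_{i}'$, which is still finitely generated and graded and still satisfies $(M_{i})_{Z}=N_{i}$ (multiplication by $Z^{N}$ is invertible on $(B_{n})_{Z}$-modules, so $(M_{i})_{Z}$ contains all generators of $M_{i}'$), and put $d_{i}=\delta _{i}|_{M_{i}}\colon M_{i}\rightarrow M_{i+1}$.

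Because $N^{\bullet}$ is a complex, $d_{i+1}d_{i}$ is the restriction of $\delta _{i+1}\delta _{i}=0$, so $M^{\bullet}=(M_{i},d_{i})$ is a genuine object of $C^{b}(\mathrm{gr}_{B_{n}})$. Applying $C^{b}(Q)$, the inclusions $j_{i}\colon M_{i}\hookrightarrow N_{i}$ localize to isomorphisms $(j_{i})_{Z}\colon (M_{i})_{Z}\overset{\cong}{\rightarrow }N_{i}$, and localizing the identity $j_{i+1}d_{i}=\delta _{i}j_{i}$ shows these commute with the differentials $(d_{i})_{Z}$ and $\delta _{i}$; hence $C^{b}(Q)(M^{\bullet})\cong N^{\bullet}$ in $C^{b}(\mathrm{gr}_{(B_{n})_{Z}})$, and $C^{b}(Q)$ is dense.

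The only step requiring care is the compatibility of the differentials: arbitrary $B_{n}$-lattices $M_{i}\subseteq N_{i}$ need not be stable under the $\delta _{i}$, so the sequence $(M_{i},\delta _{i}|_{M_{i}})$ may not even be defined. This is what forces the construction to proceed from the right-hand end and to multiply $M_{i}'$ by a power of $Z$, using the elementary fact that a finitely generated $Z$-torsion $B_{n}$-module is killed by a single power of $Z$; with that, the induction closes up. Alternatively one could fix all the $M_{i}$ at once and replace each by $\{m\in M_{i}\colon \delta _{i}(m)\in M_{i+1}\}$, or invoke the preceding Lemma to repair the compositions $\delta _{i}\delta _{i-1}$, but the downward induction is the most economical route.
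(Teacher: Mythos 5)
Your proof is correct, but it takes a genuinely different route from the paper's. The paper lifts each term $\hat{M}_i$ to a finitely generated graded $B_n$-submodule $M_i$ with $(M_i)_Z\cong \hat{M}_i$, and lifts each differential $\delta_i$ only to a map $d_i\colon Z^{k_i}M_i\rightarrow M_{i+1}$ with $(d_i)_Z$ identified with $\delta_i$; the resulting diagram is not a complex, since the compositions $d_i d_{i-1}$ merely localize to zero, i.e.\ factor through $Z$-torsion, and the preceding Lemma is then invoked to repair it into an honest complex with terms $M_{i+1}\oplus t_Z(M_i)$ and modified differentials, whose localization is the given complex because the torsion summands vanish under $Q$. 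You avoid the repair step entirely: by realizing the lifts as submodules of the $N_i$ themselves, proceeding by downward induction, and multiplying by a single power of the central element $Z$ so that $\delta_i(M_i)\subseteq M_{i+1}$, your differentials are literal restrictions of the $\delta_i$, so $d^2=0$ holds on the nose and the inclusions localize to an isomorphism of complexes; the only inputs are flatness of the central localization and the fact that a finitely generated module with zero localization is killed by one power $Z^N$. Your version is the more economical and self-contained proof of this particular proposition; the paper's torsion-correction lemma, on the other hand, is machinery it deliberately sets up and reuses later (in the lifting lemmas behind the faithfulness and fullness of $\theta$), which explains the longer detour taken there.
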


\begin{proof}
Let $0\rightarrow \overset{\wedge }{M}_{1}\overset{\delta _{1}}{\rightarrow }
$ $\overset{\wedge }{M}_{2}\overset{\delta _{2}}{\rightarrow }...\overset{%
\wedge }{M}$ $_{\ell -1}\overset{\delta _{\ell -1}}{\rightarrow }\overset{%
\wedge }{M}$ $_{\ell }\rightarrow 0$ be a complex in $C^{b}$(gr$%
_{(B_{n})_{Z}}$).

For each $\overset{\wedge }{M}_{i}$there exists a finitely generated graded $%
B_{n}$-submodule $M_{i}$ such that ($M_{i}$)$_{Z}\cong \overset{\wedge }{M}%
_{i}$ and a graded morphism d$_{i}$:Z$^{k_{i}}M_{i}\rightarrow M_{i+1}$ of $%
B_{n}$-modules such that (d$_{i}$)$_{Z}$:(Z$^{k_{i}}M_{i}$)$_{Z}\rightarrow $%
($M_{i+1}$)$_{Z}$ is isomorphic $\delta _{i}$: $\overset{\wedge }{M}%
_{i}\rightarrow \overset{\wedge }{M}_{i+1}$. Let k be $\overset{\ell }{%
\underset{i=0}{\sum }}$k$^{i}$. We then have a chain of $B_{n}$-morphisms:

Z$^{k}M_{1}\overset{d_{1}}{\rightarrow }$Z$^{k_{2}+..k_{\ell }}M_{2}\overset{%
d_{2}}{\rightarrow }$Z$^{k_{3}+..k_{\ell }}M_{3}\overset{d_{3}}{\rightarrow }
$...Z$^{k_{\ell -1}+..k_{\ell }}M_{\ell -1}\overset{d_{\ell -1}}{\rightarrow 
}$ Z$^{k_{\ell }}M_{\ell }$. Changing notation write $M_{i}$ instead of Z$%
^{k_{i}+..k_{\ell }}M_{i}$.

We then have a chain of morphisms:\newline
$M_{1}$ $\overset{d_{1}}{\rightarrow }M_{2}\overset{d_{2}}{\rightarrow }%
...M_{\ell -1}\overset{d_{\ell -1}}{\rightarrow }M_{\ell }$ such that ($%
M_{1} $)$_{Z}$ $\overset{d_{1_{Z}}}{\rightarrow }$($M_{2}$)$_{Z}\overset{%
d_{2_{Z}}}{\rightarrow }$...($M_{\ell -1}$)$_{Z}\overset{d_{\ell -1_{Z}}}{%
\rightarrow } $($M_{\ell }$)$_{Z}$ is isomorphic to the complex: $%
0\rightarrow \overset{\wedge }{M}_{1}\overset{\delta _{1}}{\rightarrow }$ $%
\overset{\wedge }{M}_{2}\overset{\delta _{2}}{\rightarrow }...\overset{%
\wedge }{M}$ $_{\ell -1}\overset{\delta _{\ell -1}}{\rightarrow }\overset{%
\wedge }{M}$ $_{\ell }\rightarrow 0.$

This implies (d$_{i}$d$_{i-1}$)$_{Z}$=0, which means d$_{i}$d$_{i-1}$
factors through a Z-torsion module. By lemma? there exists a complex:\newline
0$\rightarrow M_{1}\overset{\overset{\wedge }{d_{1}}}{\rightarrow }%
M_{2}\oplus $t$_{Z}$($M_{1}$)$\overset{\overset{\wedge }{d_{2}}}{\rightarrow 
}M_{3}\oplus $t$_{Z}$($M_{2}$)...$M_{\ell -1}$ $\oplus $t$_{Z}$($M_{\ell }$)$%
\overset{\overset{\wedge }{d_{\ell -1}}}{\rightarrow }M_{\ell }\rightarrow $%
0 such that \linebreak 0$\rightarrow M_{1Z}\overset{\overset{\wedge }{%
d_{1_{Z}}}}{\rightarrow }$($M_{2}\oplus $t$_{Z}$($M_{1}$))$_{Z}\overset{%
\overset{\wedge }{d_{2_{Z}}}}{\rightarrow }$($M_{3}\oplus $t$_{Z}$($M_{2}$))$%
_{Z}$...($M_{\ell -1}$ $\oplus $t$_{Z}$($M_{\ell }$))$_{Z}\overset{\overset{%
\wedge }{d_{\ell -1_{Z}}}}{\rightarrow }M_{\ell Z}\rightarrow $0 is
isomorphic to: $0\rightarrow \overset{\wedge }{M}_{1}\overset{\delta _{1}}{%
\rightarrow }$ $\overset{\wedge }{M}_{2}\overset{\delta _{2}}{\rightarrow }%
...\overset{\wedge }{M}$ $_{\ell -1}\overset{\delta _{\ell -1}}{\rightarrow }%
\overset{\wedge }{M}$ $_{\ell }\rightarrow 0$.
\end{proof}

\begin{corollary}
The functor $C^{b}$($\psi $):$C^{b}$(Qgr$_{_{B_{n}}}$)$\rightarrow $ $C^{b}$%
(gr$_{(B_{n})_{Z}}$) is dense.
\end{corollary}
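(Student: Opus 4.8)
The statement asserts that every bounded complex of finitely generated $(B_n)_Z$-modules is isomorphic, as a complex, to one in the image of $C^b(\psi)$.

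The plan is to deduce this directly from the previous Proposition together with the commutativity $\psi\pi = Q$. First I would take an arbitrary bounded complex $\widehat{M}^\bullet$ in $C^b(\mathrm{gr}_{(B_n)_Z})$. By the Proposition, $C^b(Q)$ is dense, so there is a bounded complex $M^\bullet$ of finitely generated graded $B_n$-modules whose image $C^b(Q)(M^\bullet)$ is isomorphic (as a complex in $C^b(\mathrm{gr}_{(B_n)_Z})$) to $\widehat{M}^\bullet$; indeed the proof of the Proposition produces an explicit such $M^\bullet$ (the complex with terms $M_i\oplus \mathrm t_Z(M_{i-1})$ and differentials $\widehat{d_i}$). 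Next I would apply the quotient functor $\pi\colon \mathrm{gr}_{B_n}\to \mathrm{Qgr}_{B_n}$ termwise to $M^\bullet$ to obtain a bounded complex $\pi(M^\bullet)$ in $C^b(\mathrm{Qgr}_{B_n})$.

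The key point is then just the identity of functors: since $\psi\pi = Q = (B_n)_Z\otimes_B -$ on $\mathrm{gr}_{B_n}$, applying $C^b(\psi)$ to $\pi(M^\bullet)$ gives, termwise and differential-by-differential,
\[
C^b(\psi)\bigl(\pi(M^\bullet)\bigr) \;=\; C^b(\psi)\circ C^b(\pi)(M^\bullet) \;=\; C^b(\psi\pi)(M^\bullet) \;=\; C^b(Q)(M^\bullet)\;\cong\;\widehat{M}^\bullet .
\]
Hence $\widehat{M}^\bullet$ lies in the essential image of $C^b(\psi)$, which is exactly density. One should note that $\psi$ is a genuine functor (from the cited result in [P], and shown exact in Proposition~2), so that $C^b(\psi)$ is well-defined on complexes and morphisms; no new argument is needed for this.

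There is no real obstacle here: the content is entirely carried by the preceding Proposition, and the corollary is a formal consequence of functoriality of $C^b(-)$ applied to the commuting triangle $\psi\pi = Q$. The only thing to be slightly careful about is that the isomorphism $C^b(Q)(M^\bullet)\cong \widehat{M}^\bullet$ obtained in the Proposition is an isomorphism of complexes (compatible with all differentials), not merely a termwise isomorphism — but this is precisely what the Proposition delivers, so transporting it through $C^b(\psi)$ presents no difficulty.
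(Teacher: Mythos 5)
Your argument is correct and is exactly the paper's proof: use the previous Proposition to lift an arbitrary bounded complex along $C^{b}(Q)$, then apply $C^{b}(\pi)$ and conclude from the factorization $C^{b}(\psi)C^{b}(\pi)=C^{b}(Q)$ that $C^{b}(\psi)$ is dense. The extra care you take about the isomorphism being one of complexes is already guaranteed by the Proposition, so nothing further is needed.
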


\begin{proof}
There are functors $C^{b}$($\pi $):$C^{b}$(gr$_{B_{n}}$)$\rightarrow C^{b}$%
(Qgr$_{B_{n}}$) and $C^{b}$($\psi $):$C^{b}$(Qgr$_{B_{n}}$)$\rightarrow
C^{b} $(gr$_{(B_{n})_{Z}}$) such that $C^{b}$($\psi $) $C^{b}$($\pi $)=$%
C^{b} $($Q$) and $C^{b}$($Q$) dense implies $C^{b}$($\psi $) is dense.
\end{proof}

\begin{corollary}
The induced functors $K^{b}(Q):$ $K^{b}(gr_{B_{n}})\rightarrow $ $%
K^{b}(gr_{(B_{n})_{Z}})$ and $K^{b}(\psi ):$ $K^{b}(Qgr_{B_{n}})\rightarrow $
$K^{b}(gr_{(B_{n})_{Z}})$ are dense.
\end{corollary}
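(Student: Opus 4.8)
The statement asserts that passing from bounded complexes to the homotopy category preserves density for both functors $K^b(Q)$ and $K^b(\psi)$. The plan is to reduce the homotopy-category statement to the already-established density of $C^b(Q)$ and $C^b(\psi)$ (the preceding Proposition and Corollary). First I would observe that the projection functor $C^b(\mathcal{A}) \to K^b(\mathcal{A})$ is the identity on objects for any additive category $\mathcal{A}$, and it is surjective on objects; in particular every object of $K^b(gr_{(B_n)_Z})$ is literally a bounded complex of finitely generated graded $(B_n)_Z$-modules. So density of $K^b(Q)$ amounts to: given a bounded complex $\widehat{M}^\bullet$ of $(B_n)_Z$-modules, find a bounded complex $M^\bullet$ of $B_n$-modules with $Q(M^\bullet)$ \emph{isomorphic in $K^b$} to $\widehat{M}^\bullet$. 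But Proposition (density of $C^b(Q)$) already produces such an $M^\bullet$ with $C^b(Q)(M^\bullet)$ isomorphic to $\widehat{M}^\bullet$ \emph{in $C^b$}, hence a fortiori isomorphic in $K^b$, since the quotient functor sends isomorphisms of complexes to isomorphisms.

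**Key steps.** Step one: record the general fact that if $F\colon \mathcal{A}\to\mathcal{B}$ is an additive functor between additive categories and $C^b(F)\colon C^b(\mathcal{A})\to C^b(\mathcal{B})$ is dense, then $K^b(F)\colon K^b(\mathcal{A})\to K^b(\mathcal{B})$ is dense; this follows because the canonical functors $C^b(\mathcal{A})\to K^b(\mathcal{A})$ and $C^b(\mathcal{B})\to K^b(\mathcal{B})$ are bijective on objects and there is a commutative square $C^b(F)$ over $K^b(F)$. Step two: apply this with $F=Q=(B_n)_Z\otimes_B-$ and with $F=\psi$, invoking the Proposition and its Corollary for the density of $C^b(Q)$ and $C^b(\psi)$ respectively. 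Step three: conclude both $K^b(Q)$ and $K^b(\psi)$ are dense. Alternatively, one may bypass even Step one and argue directly: take $\widehat{M}^\bullet\in K^b(gr_{(B_n)_Z})$, view it as an object of $C^b(gr_{(B_n)_Z})$, apply $C^b(Q)$-density (resp. $C^b(\psi)$-density) to get $M^\bullet$ with an isomorphism of complexes $Q(M^\bullet)\xrightarrow{\cong}\widehat{M}^\bullet$, and note this isomorphism of complexes is in particular an isomorphism in the homotopy category.

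**The main obstacle.** There is essentially no deep obstacle here — the content is entirely in the already-proved density of $C^b(Q)$, which rests on the key Lemma about correcting a chain of maps whose consecutive compositions factor through $Z$-torsion into an honest complex by adding $t_Z(N_i)$ summands. The only point requiring a moment's care is the observation that a homotopy equivalence is weaker than an isomorphism of complexes, so density upstairs ($C^b$) is genuinely stronger than density downstairs ($K^b$) and the implication goes the easy way; one must not confuse "dense" with "essentially surjective up to homotopy in a stronger sense." Since both functors in the Corollary are the $K^b$-versions of functors already shown $C^b$-dense, and density of $C^b(\psi)$ is exactly the preceding Corollary, the proof is a one-line deduction: $K^b$ of a $C^b$-dense additive functor is dense, because the object maps $C^b\to K^b$ are surjective and compatible with the functors.
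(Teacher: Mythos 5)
Your proof is correct and takes essentially the same approach as the paper: both reduce $K^b$-density to the already-established $C^b$-density via the object-surjective projection $C^b(\mathcal{A})\rightarrow K^b(\mathcal{A})$ and the commutative square it fits into. The only (immaterial) difference is that for $K^b(\psi)$ you invoke the $C^b(\psi)$-density corollary directly, whereas the paper instead uses the factorization $K^b(\psi)K^b(\pi)\cong K^b(Q)$ together with the density of $K^b(Q)$.
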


\begin{proof}
Interpreting $K^{b}$($\mathcal{A}$) as the stable category of $C^{b}$($%
\mathcal{A}$), denote by $\tau $:$C^{b}$($\mathcal{A}$)$\rightarrow K^{b}$($%
\mathcal{A}$) the corresponding functor. There is a commutative diagram:

$%
\begin{array}{ccc}
C^{b}\text{(gr}_{B_{n}}\text{)} & \overset{C^{b}(Q)}{\rightarrow } & C^{b}%
\text{(gr}_{(B_{n})_{Z}}\text{)} \\ 
\downarrow \tau &  & \downarrow \tau \\ 
K^{b}\text{(gr}_{B_{n}}\text{)} & \overset{K^{b}(Q)}{\rightarrow } & K^{b}%
\text{(gr}_{(B_{n})_{Z}}\text{)}%
\end{array}%
$

Since the functors $\tau $ and $C^{b}$($Q$) are dense, the functor $K^{b}$($%
Q $) is dense.

As above we have isomorphisms: $K^{b}$($\psi $) $K^{b}$($\pi $)$\cong $ $%
K^{b}$($Q$). It follows $K^{b}$($\psi $) is dense.
\end{proof}

\begin{corollary}
The induced functors D$^{b}$($Q$):D$^{b}$(gr$_{B_{n}}$)$\rightarrow $D$^{b}$%
(gr$_{(B_{n})_{Z}}$) and \linebreak D$^{b}$($\psi $):D$^{b}$(Qgr$_{B_{n}}$)$%
\rightarrow $D$^{b}$(gr$_{(B_{n})_{Z}}$) are dense.
\end{corollary}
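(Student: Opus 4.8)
The plan is to lift the argument of the two preceding corollaries one further level, from the homotopy categories to the bounded derived categories. For any abelian category $\mathcal{A}$ the canonical localization functor $\rho _{\mathcal{A}}$:$K^{b}$($\mathcal{A}$)$\rightarrow $D$^{b}$($\mathcal{A}$) is the identity on objects, hence dense. Since $Q$=($B_{n}$)$_{Z}\underset{B}{\otimes }$- is exact it induces both $K^{b}$($Q$) and D$^{b}$($Q$), and these fit into a commutative square

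\begin{center}
$\begin{array}{ccc}
K^{b}\text{(gr}_{B_{n}}\text{)} & \overset{K^{b}(Q)}{\rightarrow } & K^{b}\text{(gr}_{(B_{n})_{Z}}\text{)} \\
\downarrow \rho &  & \downarrow \rho \\
D^{b}\text{(gr}_{B_{n}}\text{)} & \overset{D^{b}(Q)}{\rightarrow } & D^{b}\text{(gr}_{(B_{n})_{Z}}\text{)}
\end{array}$
\end{center}

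By the previous corollary $K^{b}$($Q$) is dense, and $\rho $ is dense, so the composite $\rho \,K^{b}$($Q$)=D$^{b}$($Q$)$\,\rho $ is dense, being a composite of dense functors. A composite of functors can be dense only if its last (outer) factor is dense, so D$^{b}$($Q$) is dense.

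For D$^{b}$($\psi $) I would use that both $\pi $ and, by the Proposition above, $\psi $ are exact functors between abelian categories. An exact functor carries acyclic complexes to acyclic complexes, hence descends to the bounded derived categories with no resolution needed, and the identity $\psi \pi =Q$ on the abelian level passes verbatim to the derived level, giving $D^{b}$($\psi $)$\,D^{b}$($\pi $)=$D^{b}$($\psi \pi $)=$D^{b}$($Q$). Since D$^{b}$($Q$) is dense, so is D$^{b}$($\psi $) (again: the outer factor of a dense composite is dense). One can also argue directly, as for $Q$, via the commutative square relating $K^{b}$($\psi $) to D$^{b}$($\psi $) and the density of $K^{b}$($\psi $) from the previous corollary; both routes give the same conclusion.

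The only delicate point — and it is mild — is that Qgr$_{B_{n}}$ has neither enough projectives nor enough injectives, as was remarked just above, so D$^{b}$($\psi $) cannot be built through a resolution functor in the usual way. This is exactly why the whole chain of corollaries is organized around the \emph{exactness} of $\psi $ (the content of the Proposition) rather than around injective or projective resolutions: once exactness is in hand the passage to D$^{b}$ and the verification that the square commutes are purely formal, and the real work was already done in the density statement for $C^{b}$($Q$) and its transfer to $K^{b}$($Q$). So here the main (very small) obstacle is just the bookkeeping: checking the square commutes and applying the "dense $\circ$ dense" principle to the correct factor.
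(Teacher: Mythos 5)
Your proof is correct and follows essentially the same route as the paper: the commutative square relating $K^{b}(Q)$ to $D^{b}(Q)$ via the (dense) localization functors gives density of $D^{b}(Q)$, and the factorization $D^{b}(\psi)D^{b}(\pi)=D^{b}(Q)$, available because $\pi$ and $\psi$ are exact, then yields density of $D^{b}(\psi)$ since the outer factor of a dense composite is dense.
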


\begin{proof}
Since the functors $\pi $:gr$_{B_{n}}\rightarrow $Qgr$_{B_{n}}$ and $\psi $%
:Qgr$_{B_{n}}\rightarrow $gr(B$_{n}$)$_{Z}$ are exact they induce derived
functors D$^{b}$($\pi $):D$^{b}$(gr$_{B_{n}}$)$\rightarrow $D$^{b}$(Qgr$%
_{B_{n}}$), D$^{b}$($\psi $):D$^{b}$(Qgr$_{B_{n}}$)$\rightarrow $D$^{b}$(gr$%
_{B_{n}{}_{Z}}$) such that D$^{b}$($\psi $)D$^{b}$($\pi $)=D$^{b}$($Q$).

There is a commutative exact diagram:

$%
\begin{array}{ccc}
K^{b}\text{(gr}_{B_{n}}\text{)} & \overset{K^{b}(Q)}{\rightarrow } & K^{b}%
\text{(gr}_{(B_{n})_{Z}}\text{)} \\ 
\downarrow &  & \downarrow \\ 
\text{D}^{b}\text{(gr}_{B_{n}}\text{)} & \overset{D^{b}(Q)}{\rightarrow } & 
\text{D}^{b}\text{(gr}_{(B_{n})_{Z}}\text{)}%
\end{array}%
$

where the functors corresponding to the columns are dense, hence D$^{b}$($Q$%
) is dense, which in turn implies D$^{b}$($\psi $) is dense.
\end{proof}

We will describe next he kernel of the functor D$^{b}$($\psi $). By
definition, KerD$^{b}$($\psi $)=\linebreak \{$\overset{\wedge }{M^{\circ }}%
\mid $D$^{b}$($\psi $)($\overset{\wedge }{M^{\circ }}$) is acyclic\}.

\begin{proposition}
There is the following description of $\mathcal{T=}$KerD$^{b}$($\psi $).KerD$%
^{b}$($\psi $)=\linebreak \{$\pi M^{\circ }\mid M^{\circ }\in $D$^{b}$(gr$%
_{B_{n}}$) such that for all i, H$^{i}$($M^{\circ }$) is of Z-torsion\}.
\end{proposition}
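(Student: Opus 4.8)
The plan is to prove the two inclusions separately, using the already-established properties of $\psi$ and the explicit description of morphisms in $\mathrm{Qgr}_{B_n}$. First I would observe that every object of $D^{b}(\mathrm{Qgr}_{B_n})$ is isomorphic to $\pi M^{\circ}$ for some $M^{\circ}\in D^{b}(\mathrm{gr}_{B_n})$: indeed $C^{b}(\pi)$ is surjective on objects (every complex of objects of $\mathrm{Qgr}_{B_n}$ is $C^{b}(\pi)$ of a complex of finitely generated graded $B_n$-modules, since we may replace each $\pi(N_i)$ by $N_i$ and lift the differentials up to a torsion ambiguity exactly as in Lemma?), so it suffices to describe when $\pi M^{\circ}$ lies in the kernel. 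Since $\psi$ and $\pi$ are exact functors, $D^{b}(\psi)$ and $D^{b}(\pi)$ compute cohomology by applying the functors degreewise, and $D^{b}(\psi)D^{b}(\pi)=D^{b}(Q)$ with $Q=(B_n)_Z\otimes_{B}-$. Hence $H^{i}\big(D^{b}(\psi)\pi M^{\circ}\big)=H^{i}\big(D^{b}(Q)M^{\circ}\big)=\big(H^{i}(M^{\circ})\big)_Z$, because localization at the central element $Z$ is exact and therefore commutes with taking cohomology of a complex.

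With that computation in hand both inclusions are short. If $H^{i}(M^{\circ})$ is $Z$-torsion for all $i$, then $\big(H^{i}(M^{\circ})\big)_Z=0$ for all $i$ (the kernel of $M\to M_Z$ is $t_Z(M)$, so a $Z$-torsion module localizes to zero), whence $D^{b}(\psi)\pi M^{\circ}$ is acyclic and $\pi M^{\circ}\in\mathcal{T}$. Conversely, suppose $X^{\circ}\in\mathcal{T}$; write $X^{\circ}\cong\pi M^{\circ}$ as above. Then $0=H^{i}\big(D^{b}(\psi)\pi M^{\circ}\big)=\big(H^{i}(M^{\circ})\big)_Z$ for every $i$, so each $H^{i}(M^{\circ})$ is $Z$-torsion, i.e. $X^{\circ}$ has the required form. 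This proves equality of the two classes.

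The step I expect to carry the real weight is the first one: that every object of $D^{b}(\mathrm{Qgr}_{B_n})$ is isomorphic (in the derived category) to $\pi$ of a complex of finitely generated graded $B_n$-modules. One must be a little careful because $\pi$ is only surjective on objects, not on morphisms, so a complex in $\mathrm{Qgr}_{B_n}$ lifts only to a \emph{diagram} $M_1\to M_2\to\cdots\to M_\ell$ of $B_n$-modules whose successive compositions vanish after localization, hence factor through $Z$-torsion modules; applying Lemma? (the mapping-cone-type replacement with the summands $t_Z(M_i)$) produces an honest complex $N^{\circ}$ of finitely generated graded $B_n$-modules, and then $\pi N^{\circ}$ is isomorphic in $D^{b}(\mathrm{Qgr}_{B_n})$ to the given complex since $\pi$ kills the torsion summands $t_Z(M_i)\subset t(M_i)$... here one checks that $t_Z(N_i)$ is torsion in the sense of finite length, which holds because $N_i$ is noetherian so $t_Z(N_i)$ is finitely generated and $Z$-torsion, hence finite dimensional. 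Once this lifting is in place, the cohomology computation $H^i(D^b(\psi)\pi M^\circ)=(H^i M^\circ)_Z$ and the two trivial vanishing arguments finish the proof.
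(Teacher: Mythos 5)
Your overall strategy is the same as the paper's: lift an object of $D^{b}(\mathrm{Qgr}_{B_{n}})$ to a complex $N^{\circ}$ of finitely generated graded $B_{n}$-modules, use $\psi\pi = Q = (B_{n})_{Z}\otimes_{B_{n}}-$ together with exactness of localization to get $H^{i}(\psi\pi N^{\circ})\cong (H^{i}(N^{\circ}))_{Z}$, and read off both inclusions from the fact that a module localizes to zero at $Z$ exactly when it is $Z$-torsion. That part is fine. The problem is in the lifting step, which you yourself flag as the one carrying the real weight. Your justification that $\pi$ kills the correction summands rests on the claim that $t_{Z}(N_{i})$, being finitely generated and $Z$-torsion, is finite dimensional over $\Bbbk$. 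This is false: $B_{n}/ZB_{n}\cong C_{n}=\Bbbk_{q}[X_{1},\dots ,X_{n}]$ is finitely generated, annihilated by $Z$, and infinite dimensional. The paper is careful to distinguish the two torsion theories precisely for this reason: torsion (finite length) implies $Z$-torsion, but not conversely. Consequently the complex you build from Lemma~1, with terms $N_{i+1}\oplus t_{Z}(N_{i})$, is generally \emph{not} sent by $\pi$ to something termwise (or obviously derived-)isomorphic to the complex you started with, so your reduction to objects of the form $\pi M^{\circ}$ does not go through as written.

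The gap is repairable, and the repair is what the paper actually does. A morphism $\pi N_{i}\to \pi N_{i+1}$ in $\mathrm{Qgr}_{B_{n}}$ is represented by a genuine map $(N_{i})_{\geq k}\to N_{i+1}$ on a truncation, and vanishing of the composite in $\mathrm{Qgr}_{B_{n}}$ means the composite factors through the finite-length torsion $t(N_{i+2})$ (not merely through $t_{Z}(N_{i+2})$). Since $t(N_{i+2})$ is finite dimensional, truncating the sources in sufficiently high degree makes the composites literally zero, so one obtains an honest complex $N^{\circ}_{\geq k}$ with $\pi(N^{\circ}_{\geq k})\cong$ the given complex (because $\pi M\cong \pi M_{\geq k}$ always). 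Alternatively, you could run your correction-lemma argument with the summands $t(N_{i})$ instead of $t_{Z}(N_{i})$, since those are killed by $\pi$; but the $t_{Z}$-version of the lemma is the tool designed for lifting along the localization $Q$ (where composites only factor through $Z$-torsion), not along $\pi$. With either repair, your cohomology computation and the two short vanishing arguments complete the proof exactly as in the paper.
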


\begin{proof}
The kernel of the functor D$^{b}$($\psi $) is the category $\mathcal{T}$ of
complexes:

$\overset{\sim }{N}^{\circ }:0\rightarrow \pi N_{1}\overset{\overset{\wedge }%
{d_{1}}}{\rightarrow }\pi N_{2}\overset{\overset{\wedge }{d_{2}}}{%
\rightarrow }\pi N_{3}...\pi N_{\ell -1}$ $\overset{\overset{\wedge }{%
d_{\ell -1}}}{\rightarrow }\pi N_{\ell }\rightarrow 0$, such that:

$0\rightarrow \psi \pi N_{1}\overset{\psi \overset{\wedge }{d_{1}}}{%
\rightarrow }\psi \pi N_{2}\overset{\psi \overset{\wedge }{d_{2}}}{%
\rightarrow }\psi \pi N_{3}...\psi \pi N_{\ell -1}$ $\overset{\overset{%
\wedge }{\psi d_{\ell -1}}}{\rightarrow }\psi \pi N_{\ell }\rightarrow 0$ is
acyclic.

Proceeding as above, we may assume each $N_{i}$ and each map $\overset{%
\wedge }{\text{d}_{i}}$ lifts to a map d$_{i}$:($N_{i}$)$_{\geq
k}\rightarrow N_{i+1}$ such that the map $\pi $(d$_{i}$):$\pi $(($N_{i}$)$%
_{\geq k}$)$\rightarrow \pi $($N_{i+1}$) is isomorphic to $\overset{\wedge }{%
\text{d}_{i}}$: $\pi N_{i}\rightarrow \pi N_{i+1}.$

Taking a large enough truncation we get a complex of $B_{n}$-modules:

$N_{\geq k}^{\circ }$: 0$\rightarrow $($N_{1}$)$_{\geq k}\overset{d_{1}}{%
\rightarrow }$($N_{2}$)$_{\geq k}\overset{d_{2}}{\rightarrow }$...($N_{\ell
-1}$)$_{\geq k}\overset{d_{\ell -1}}{\rightarrow }$($N_{\ell }$)$_{\geq
k}\rightarrow $0, \linebreak such that $\pi $($N_{\geq k}^{\circ }$)$\cong 
\overset{\sim }{N}^{\circ }.$

The complex:

($N_{\geq k}^{\circ }$)$_{Z}$: 0$\rightarrow $($N_{1}$)$_{\geq k}$)$_{Z}%
\overset{d_{1_{Z}}}{\rightarrow }$(($N_{2}$)$_{\geq k}$)$_{Z}\overset{%
d_{2_{Z}}}{\rightarrow }$...(($N_{\ell -1}$)$_{\geq k}$)$_{Z}\overset{%
d_{\ell -1_{Z}}}{\rightarrow }$(($N_{\ell }$)$_{\geq k}$)$_{Z}\rightarrow $0
is isomorphic to 0$\rightarrow \psi \pi N_{1}\overset{\psi \overset{\wedge }{%
d_{1}}}{\rightarrow }\psi \pi N_{2}\overset{\psi \overset{\wedge }{d_{2}}}{%
\rightarrow }\psi \pi N_{3}$...$\psi \pi N_{\ell -1}$ $\overset{\overset{%
\wedge }{\psi d_{\ell -1}}}{\rightarrow }\psi \pi N_{\ell }\rightarrow $0,
hence it is acyclic.

Changing notation we have a complex of $B_{n}$-modules:

$N^{\circ }$: 0$\rightarrow N_{1}\overset{d_{1}}{\rightarrow }N_{2}\overset{%
d_{2}}{\rightarrow }...N_{\ell -1}\overset{d_{\ell -1}}{\rightarrow }N_{\ell
}\rightarrow $0 such that \linebreak ($N^{\circ }$)$_{Z}$: 0$\rightarrow $($%
N_{1}$)$_{Z}\overset{d_{1_{Z}}}{\rightarrow }$($N_{2}$)$_{Z}\overset{%
d_{2_{Z}}}{\rightarrow }$...($N_{\ell -1}$)$_{Z}\overset{d_{\ell -1_{Z}}}{%
\rightarrow }$($N_{\ell }$)$_{Z}\rightarrow $0 is acyclic.

We have exact sequences:

$%
\begin{array}{cccccccccc}
&  &  &  &  & 0 &  &  &  &  \\ 
&  &  &  &  & \downarrow &  &  &  &  \\ 
0\rightarrow & \text{Kerd}_{1} & \rightarrow & N_{1} & \overset{d_{1}}{%
\rightarrow } & \text{Imd}_{1} & \rightarrow 0 &  &  &  \\ 
&  &  &  &  & \downarrow \text{j} &  &  &  &  \\ 
&  &  &  & 0\rightarrow & \text{Kerd}_{2} & \rightarrow & N_{2} & \overset{%
d_{2}}{\rightarrow } & N_{3} \\ 
&  &  &  &  & \downarrow &  &  &  &  \\ 
&  &  &  &  & \text{H}^{1}\text{(}N^{\circ }\text{)} &  &  &  &  \\ 
&  &  &  &  & \downarrow &  &  &  &  \\ 
&  &  &  &  & 0 &  &  &  & 
\end{array}%
$

Localizing we get exact sequences:

$%
\begin{array}{cccccccccc}
&  &  &  &  & 0 &  &  &  &  \\ 
&  &  &  &  & \downarrow &  &  &  &  \\ 
0\rightarrow & \text{(Kerd}_{1}\text{)}_{Z} & \rightarrow & \text{(}N_{1}%
\text{)}_{Z} & \overset{d_{1_{Z}}}{\rightarrow } & \text{(Imd}_{1}\text{)}%
_{Z} & \rightarrow 0 &  &  &  \\ 
&  &  &  &  & \downarrow \text{j}_{Z} &  &  &  &  \\ 
&  &  &  & 0\rightarrow & \text{(Kerd}_{2}\text{)}_{Z} & \rightarrow & \text{%
(}N_{2}\text{)}_{Z} & \overset{d_{2_{Z}}}{\rightarrow } & \text{(}N_{3})_{Z}
\\ 
&  &  &  &  & \downarrow &  &  &  &  \\ 
&  &  &  &  & \text{H}^{1}\text{(}N^{\circ }\text{)}_{Z} &  &  &  &  \\ 
&  &  &  &  & \downarrow &  &  &  &  \\ 
&  &  &  &  & 0 &  &  &  & 
\end{array}%
$

where j$_{Z}$, d$_{1_{Z}}$ are isomorphisms, hence H$^{0}$($N^{\circ }$)$%
_{Z} $=0, H$^{1}$($N^{\circ }$)$_{Z}$=0. Therefore H$^{0}$($N^{\circ }$) and
H$^{1}$($N^{\circ }$) are Z-torsion. More generally for each i the modules H$%
^{i}$($N^{\circ }$) are Z-torsion.
\end{proof}

\begin{remark}
Let $M$ be a Koszul left $B^{!}$-module such that $F$($M$)=$\underset{n\geq 0%
}{\oplus }$Ext$_{B^{!}}^{n}$($M$,$B_{0}^{!}$) is a $B$-module of Z-torsion
and $\sigma :B_{n}\rightarrow B_{n}$ the Nakayama automorphism defined in
Section 1. Then $F$($\sigma M$) is of Z-torsion, in particular $F$(D($%
M^{\ast }$)[n]) is of Z-torsion.
\end{remark}

Since $F$($\sigma M$)=$\sigma FM$ for x$\in F(M)$ there is an integer k$\geq 
$0 such that Z$^{k}$x=0 and in $\sigma FM$, Z$^{k}\ast $x=$\sigma $(Z$^{k}$%
)x=c$^{k}$Z$^{k}$x=0.

\begin{corollary}
\bigskip The Nakayama automorphism $\sigma $: $B_{n}\rightarrow B_{n}$
induces an autoequivalence D$^{b}$($\sigma $): D$^{b}$(gr$_{B_{n}}$)$%
\rightarrow $D$^{b}$(gr$_{B_{n}}$) and $\mathcal{T}$ is invariant under D$%
^{b}$($\sigma $).
\end{corollary}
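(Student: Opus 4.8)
The plan is to establish the two assertions separately; the first is formal and the second reduces to the computation displayed just after the Remark above. For the first I would record that $\sigma$ is a graded $\Bbbk$-algebra automorphism of $B_{n}$ (established in Section 1), so twisting the module action by $\sigma$ defines a functor $\sigma$: Gr$_{B_{n}}\rightarrow$Gr$_{B_{n}}$, $M\mapsto\sigma M$, where $\sigma M=M$ as a graded $\Bbbk$-vector space and $b\cdot m=\sigma(b)m$. This functor is $\Bbbk$-linear and exact, since kernels and cokernels are computed on the underlying vector spaces, which are unchanged; it preserves finite generation, the generating sets being literally the same, and finite length, the $\Bbbk$-dimensions being unchanged; and it is an equivalence with quasi-inverse the twist by $\sigma^{-1}$. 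Restricting to gr$_{B_{n}}$ and using that an exact equivalence of abelian categories induces a triangulated autoequivalence of their bounded derived categories (by applying the functor degreewise to complexes), I obtain the autoequivalence D$^{b}$($\sigma$): D$^{b}$(gr$_{B_{n}}$)$\rightarrow$D$^{b}$(gr$_{B_{n}}$), with quasi-inverse D$^{b}$($\sigma^{-1}$). This gives the first assertion.

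For the statement about $\mathcal{T}$ I first have to observe that D$^{b}$($\sigma$) also makes sense on D$^{b}$(Qgr$_{B_{n}}$): since $\sigma$ preserves finite length it preserves the Serre subcategory of torsion modules, so $\pi\sigma$ kills torsion and, by the universal property of the quotient category [P], there is a unique exact functor $\overline{\sigma}$: Qgr$_{B_{n}}\rightarrow$Qgr$_{B_{n}}$ with $\overline{\sigma}\pi\cong\pi\sigma$; it is an autoequivalence with quasi-inverse $\overline{\sigma^{-1}}$ and therefore induces an autoequivalence D$^{b}$($\sigma$) of D$^{b}$(Qgr$_{B_{n}}$) commuting with D$^{b}$($\pi$). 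Next I would take $\pi M^{\circ}\in\mathcal{T}$ with $M^{\circ}\in$D$^{b}$(gr$_{B_{n}}$) and every H$^{i}$($M^{\circ}$) of Z-torsion, using the preceding Proposition describing $\mathcal{T}$. Since $\sigma$ is exact, D$^{b}$($\sigma$)($\pi M^{\circ}$)$\cong\pi$($\sigma M^{\circ}$) and H$^{i}$($\sigma M^{\circ}$)$\cong\sigma$H$^{i}$($M^{\circ}$) for all $i$. It then suffices to see that a $\sigma$-twist of a Z-torsion $B_{n}$-module $N$ is again Z-torsion: if $Z^{m}x=0$ in $N$, then in $\sigma N$ one has $Z^{m}\cdot x=\sigma(Z)^{m}x=c^{m}Z^{m}x=0$, using $\sigma(Z)=cZ$ with $c$ a nonzero scalar (Section 1, and exactly the computation after the Remark). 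Hence every H$^{i}$($\sigma M^{\circ}$) is Z-torsion, so D$^{b}$($\sigma$)($\pi M^{\circ}$)$\in\mathcal{T}$; the same argument with $\sigma^{-1}$, which satisfies $\sigma^{-1}(Z)=c^{-1}Z$, gives D$^{b}$($\sigma^{-1}$)($\mathcal{T}$)$\subseteq\mathcal{T}$, and therefore D$^{b}$($\sigma$)($\mathcal{T}$)$=\mathcal{T}$.

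I do not anticipate a genuine obstacle. The only point requiring care is making the statement well posed, namely promoting D$^{b}$($\sigma$) from D$^{b}$(gr$_{B_{n}}$) to D$^{b}$(Qgr$_{B_{n}}$) and checking it commutes with $\pi$, which in turn only requires that $\sigma$ preserve torsion modules. Once the cohomological description of $\mathcal{T}$ from the preceding Proposition is in hand, the invariance is immediate from $\sigma(Z)=cZ$.
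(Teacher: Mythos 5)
Your proposal is correct and follows essentially the same route as the paper: twist the module action by $\sigma$ to get an exact autoequivalence preserving finite length, descend to Qgr$_{B_{n}}$ and hence to the derived category, and then use $\sigma(Z)=cZ$ (the computation after the Remark) to see that twists of Z-torsion modules stay Z-torsion, so $\mathcal{T}$ is preserved. Your write-up is merely more explicit than the paper's (universal property of the quotient, commutation with $\pi$, and the $\sigma^{-1}$ argument upgrading stability to invariance), but the underlying argument is the same.
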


\begin{proof}
We saw in Section 1 that given an automorphism of graded algebras $\sigma $:$%
B_{n}\rightarrow B_{n}$, there is an autoequivalence gr$_{B_{n}}\rightarrow $%
gr$_{B_{n}}$, that we also denote by $\sigma $, such that $\sigma $($M$) is
the module $M$ with twisted multiplication b$\in B_{n}$ and m$\in M$, b$\ast 
$m=$\sigma $(b)m, clearly $\sigma $ is an exact functor that sends modules
of finite length into modules of finite length. Then $\sigma $ induces an
exact functor: $\sigma $: Qgr$_{B_{n}}\rightarrow $Qgr$_{B_{n}}$. Therefore
an autoequivalence: D$^{b}$($\sigma $):D$^{b}$(gr$_{B_{n}}$)$\rightarrow $D$%
^{b}$(gr$_{B_{n}}$)$.$ If $M$ is a module of Z-torsion, then $\sigma M$ is
of Z-torsion. From this it is clear that D$^{b}$($\sigma $) sends an element
of $\mathcal{T}$ to an element of $\mathcal{T}$.
\end{proof}

The category $\mathcal{T}$=KerD($\psi $) is "epasse" (thick) and we can take
the Verdier quotient D$^{b}$(QgrB$_{n}$) /$\mathcal{T}.$[Mi]

Our aim is to prove the main result of the section:

\begin{theorem}
There exists an equivalence of triangulated categories:

D$^{b}$(Qgr$_{B_{n}}$)/$\mathcal{T}\cong $D$^{b}$(gr$_{(B_{n})_{Z}}$).
\end{theorem}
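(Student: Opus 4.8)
The plan is to deduce the theorem from the universal property of the Verdier quotient together with the density and kernel computations already established. We have the exact functor $\psi:\mathrm{Qgr}_{B_{n}}\rightarrow\mathrm{gr}_{(B_{n})_{Z}}$, hence its derived functor $\mathrm{D}^{b}(\psi):\mathrm{D}^{b}(\mathrm{Qgr}_{B_{n}})\rightarrow\mathrm{D}^{b}(\mathrm{gr}_{(B_{n})_{Z}})$, which is a triangulated functor whose kernel is precisely $\mathcal{T}=\{\pi M^{\circ}\mid M^{\circ}\in\mathrm{D}^{b}(\mathrm{gr}_{B_{n}}),\ H^{i}(M^{\circ})\text{ is }Z\text{-torsion for all }i\}$ by the preceding Proposition, and which is dense by the final Corollary. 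Since $\mathcal{T}$ is a thick (épaisse) triangulated subcategory, the Verdier quotient $\mathrm{D}^{b}(\mathrm{Qgr}_{B_{n}})/\mathcal{T}$ exists, and the universal property of localization gives a unique triangulated functor $\overline{\mathrm{D}^{b}(\psi)}:\mathrm{D}^{b}(\mathrm{Qgr}_{B_{n}})/\mathcal{T}\rightarrow\mathrm{D}^{b}(\mathrm{gr}_{(B_{n})_{Z}})$ through which $\mathrm{D}^{b}(\psi)$ factors. The goal is to show $\overline{\mathrm{D}^{b}(\psi)}$ is an equivalence.

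First I would check that $\overline{\mathrm{D}^{b}(\psi)}$ is dense. This is immediate: $\mathrm{D}^{b}(\psi)$ is dense, it factors through $\overline{\mathrm{D}^{b}(\psi)}$ composed with the quotient functor, so $\overline{\mathrm{D}^{b}(\psi)}$ is dense as well. Next, fullness: given a morphism $g:\mathrm{D}^{b}(\psi)(X^{\circ})\rightarrow\mathrm{D}^{b}(\psi)(Y^{\circ})$ in $\mathrm{D}^{b}(\mathrm{gr}_{(B_{n})_{Z}})$, I would use the density of $C^{b}(\psi)$ (and hence of $K^{b}(\psi)$) to lift data at the level of complexes: realize $X^{\circ}$ and $Y^{\circ}$ by honest complexes of finitely generated graded $B_{n}$-modules truncated far enough in degree, represent $g$ by a roof $\mathrm{D}^{b}(\psi)(X^{\circ})\xleftarrow{s}W^{\circ}\xrightarrow{t}\mathrm{D}^{b}(\psi)(Y^{\circ})$, use density to write $W^{\circ}$ as $(P^{\circ})_{Z}$ for a complex $P^{\circ}$ over $B_{n}$, push the roof back through $\pi$, and observe that $s$ becoming an isomorphism after localization means $\mathrm{cone}(s)$ is $Z$-torsion in cohomology, i.e. lies in $\mathcal{T}$; thus the roof descends to a morphism in the Verdier quotient mapping to $g$. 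The key input here is Proposition (the one about $C^{b}(Q)$ dense) and its corollaries, together with the description of $\mathcal{T}$.

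Faithfulness is where the real work lies, and I expect it to be the main obstacle. Suppose $\alpha:X^{\circ}\rightarrow Y^{\circ}$ in $\mathrm{D}^{b}(\mathrm{Qgr}_{B_{n}})$ has $\mathrm{D}^{b}(\psi)(\alpha)=0$; I must show $\alpha$ becomes zero in the Verdier quotient, equivalently that $\alpha$ factors through an object of $\mathcal{T}$. Again I would lift $\alpha$ to a morphism of complexes $a:M_{\geq k}^{\circ}\rightarrow N_{\geq k}^{\circ}$ over $B_{n}$ after sufficient truncation, so that $(a)_{Z}$ is null-homotopic; the null-homotopy lives over $(B_{n})_{Z}$ and, using that localization is exact and that a chosen homotopy can be cleared of denominators by multiplying by a power of $Z$, one shows that $a$ itself, up to maps through $Z$-torsion modules, factors through a complex whose cohomology is $Z$-torsion. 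Concretely, the cone on $a$ applied after $\psi$ is acyclic, hence lies in $\mathcal{T}$ by the kernel description, and the standard triangle argument — $\alpha$ factors through $\mathrm{cone}(a)[-1]$ only if the connecting map vanishes — must be massaged: rather, one shows directly that the mapping cone of $a$ is isomorphic in $\mathrm{D}^{b}(\mathrm{Qgr}_{B_{n}})$ to an object of $\mathcal{T}$ and that the factorization $X^{\circ}\rightarrow Z^{\circ}\rightarrow Y^{\circ}$ with $Z^{\circ}\in\mathcal{T}$ is forced by $\mathrm{D}^{b}(\psi)(\alpha)=0$. The delicate point is controlling homotopies and roofs simultaneously over $\mathrm{Qgr}_{B_{n}}$, which has neither enough projectives nor enough injectives; the truncation functor $M\mapsto M_{\geq k}$ and Lemma (on completing sequences with near-complexes using $t_{Z}$) are the tools that replace the missing projective/injective resolutions, and assembling them into a clean "calculus of fractions" compatibility statement is the technical heart of the argument.
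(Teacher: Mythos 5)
Your overall skeleton --- factor $\mathrm{D}^{b}(\psi)$ through the Verdier quotient by its kernel, inherit density, then prove fullness and faithfulness by lifting roofs from $(B_{n})_{Z}$-complexes back to $B_{n}$-complexes after clearing denominators by powers of $Z$ --- is exactly the paper's strategy. But the two statements that carry the real weight are missing, and the one concrete mechanism you do offer is wrong. For fullness, density of $C^{b}(Q)$ only lets you lift the \emph{object} $W^{\circ}$ of the roof; you must also lift the two legs, and a morphism $(P^{\circ})_{Z}\rightarrow (M^{\circ})_{Z}$ of localized complexes need not come from any chain map $P^{\circ}\rightarrow M^{\circ}$: after multiplying by $Z^{k}$ the squares commute only up to maps factoring through $Z$-torsion, so the source must be replaced by a corrected complex with terms $Z^{k}P^{i}\oplus t_{Z}(M^{i})$ and modified differentials (this is precisely the paper's lemma on lifting morphisms of localized complexes, used twice, once for each leg of the roof). ``Push the roof back through $\pi$'' silently assumes such a lemma, and it is the nontrivial part of the fullness proof.

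For faithfulness, the claim that ``the cone on $a$ applied after $\psi$ is acyclic, hence lies in $\mathcal{T}$'' is false: if $\mathrm{D}^{b}(\psi)(\alpha)=0$ then the cone of $\psi(a)$ is isomorphic to $\psi(X^{\circ})[1]\oplus \psi(Y^{\circ})$, which is acyclic only when $\psi(X^{\circ})$ and $\psi(Y^{\circ})$ are; acyclicity of the cone characterizes $\psi(a)$ being \emph{invertible}, not zero, so the mapping cone of $a$ is not in $\mathcal{T}$ in general and $\alpha$ does not factor through it. What is actually needed --- and what the paper proves as its second lemma --- is a homotopy-lifting statement: from the fact that $f_{Z}\upsilon$ is null-homotopic over $(B_{n})_{Z}$ for some quasi-isomorphism $\upsilon$, one produces a genuine chain map $\nu\colon N^{\circ}\rightarrow X^{\circ}$ over $B_{n}$ (again with a $t_{Z}$-correction on the source) such that $\nu_{Z}$ is identified with $\upsilon$ and the composite $f\circ\nu$ is null-homotopic as a map of $B_{n}$-complexes; since $\nu_{Z}$ is a quasi-isomorphism, the cone of $\pi\nu$ has $Z$-torsion homology, so $\pi\nu\in\Psi(\mathcal{T})$ and $Q(\pi f)\circ Q(\pi\nu)=0$ forces $Q(\pi f)=0$. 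You gesture at ``clearing denominators'' and explicitly defer ``assembling a clean calculus-of-fractions statement'' as the technical heart, but that assembly, together with the two lifting lemmas, \emph{is} the proof; as written the argument has a genuine gap at both fullness and faithfulness.
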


Let $\overset{\wedge }{\text{s}}^{-1}\overset{\wedge }{\text{f}}:$ $\overset{%
\wedge }{X}^{\circ }\rightarrow \overset{\wedge }{Y}^{\circ }$be a map in $%
\Psi $($\mathcal{T}$) (in Miyachi's notation). This is a roof

\begin{center}
$%
\begin{array}{ccccc}
&  & \overset{\wedge }{K}^{\circ } &  &  \\ 
& \overset{\wedge }{\text{f}}\nearrow &  & \nwarrow \overset{\wedge }{\text{s%
}} &  \\ 
\overset{\wedge }{X}^{\circ } &  &  &  & \overset{\wedge }{Y}^{\circ }%
\end{array}%
$,
\end{center}

where $\overset{\wedge }{X}^{\circ }$is a complex of the form:

$\bigskip \overset{\wedge }{X}^{\circ :}$:$0\rightarrow \pi X^{n_{0}}\overset%
{\overset{\wedge }{d}}{\rightarrow }\pi X^{n_{0+1}}\overset{\overset{\wedge }%
{d}}{\rightarrow }$...$\rightarrow \pi X^{n_{0}+\ell -1}\overset{\overset{%
\wedge }{d}}{\rightarrow }$ $\pi X^{n_{0}+\ell }\rightarrow 0.$

After a proper truncation there exists complexes of graded $B_{n}$-modules $%
X^{\circ }$, $K^{\circ }$, $Y^{\circ }$ such that $\pi X^{\circ }\cong $ $%
\overset{\wedge }{X}^{\circ }$, $\pi K^{\circ }\cong \overset{\wedge }{K}%
^{\circ :}$,$\pi $ $Y^{\circ }\cong \overset{\wedge }{Y}^{\circ :}$and
graded maps f:$X^{\circ }\rightarrow K^{\circ }$, s: $Y^{\circ }\rightarrow
K^{\circ }$ such that $\pi $f=$\overset{\wedge }{\text{f}}$, $\pi s=\overset{%
\wedge }{\text{s}}$, the roof $\overset{\wedge }{\text{s}}^{-1}\overset{%
\wedge }{\text{f}}$ becomes:

\begin{center}
$%
\begin{array}{ccccc}
&  & \pi K^{\circ } &  &  \\ 
& \pi \text{f}\nearrow &  & \nwarrow \pi \text{s} &  \\ 
\pi X^{\circ } &  &  &  & \pi Y^{\circ }%
\end{array}%
$,
\end{center}

where $\pi $s is a quasi isomorphism.

There is a triangle in $K^{b}$(gr$_{B_{n}}$): $X^{\circ }\overset{f}{%
\rightarrow }K^{\circ }\overset{g}{\rightarrow }Z^{\circ }\overset{h}{%
\rightarrow }X^{\circ }$[-1] which induces a morphism of triangles:

\begin{center}
$%
\begin{array}{ccccccc}
X^{\circ } & \overset{f}{\rightarrow } & K^{\circ } & \overset{g}{%
\rightarrow } & Z^{\circ } & \overset{h}{\rightarrow } & X^{\circ }\text{[-1]%
} \\ 
\uparrow \text{u} &  & \uparrow \text{s} &  & \uparrow \text{1} &  & 
\uparrow \text{u[-1]} \\ 
X^{\prime \circ } & \rightarrow & Y^{\circ } & \overset{gs}{\rightarrow } & 
Z^{\circ } & \rightarrow & X^{\prime \circ }\text{[-1]}%
\end{array}%
$
\end{center}

Applying $\pi $ we obtain a morphism of triangles:

\begin{center}
$%
\begin{array}{ccccccc}
\pi X^{\circ } & \overset{\pi f}{\rightarrow } & \pi K^{\circ } & \overset{%
\pi g}{\rightarrow } & \pi Z^{\circ } & \pi \overset{h}{\rightarrow } & \pi
X^{\circ }\text{[-1]} \\ 
\uparrow \pi \text{u} &  & \uparrow \pi \text{s} &  & \uparrow \text{1} &  & 
\uparrow \pi \text{u[-1]} \\ 
\pi X^{\prime \circ } & \rightarrow & \pi Y^{\circ } & \overset{\pi gs}{%
\rightarrow } & \pi Z^{\circ } & \rightarrow & \pi X^{\prime \circ }\text{%
[-1]}%
\end{array}%
$
\end{center}

By definition of $\Psi $($\mathcal{T}$) the object $\pi Z^{\circ }\in 
\mathcal{T},$which means $Z^{\circ }$ has homology of Z-torsion. The maps $%
\pi $s, $\pi $u are quasi isomorphisms. Applying the functor $\psi $ we
obtain a triangle: $\psi \pi X^{\circ }\overset{\psi \pi f}{\rightarrow }%
\psi \pi K^{\circ }\overset{\psi \pi g}{\rightarrow }\psi \pi Z^{\circ }%
\overset{\psi \pi h}{\rightarrow }\psi \pi X^{\circ }$[-1] where $\psi \pi
Z^{\circ }$is acyclic. It follows $\psi \pi $ f is invertible in D$^{b}$(gr$%
_{(B_{n})_{Z}}$).

We have proved the functor: D$^{b}$($\psi $):D$^{b}$(Qgr$_{B_{n}}$)$%
\rightarrow $D$^{b}$(gr$_{(B_{n})_{Z}}$) sends elements of $\Psi $($\mathcal{%
T}$) to invertible elements in D$^{b}$(gr(B$_{n}$)$_{Z}$). By [Mi] Prop.
712, there exists a functor $\theta $:D$^{b}$(Qgr$_{B_{n}}$)/$\mathcal{T}%
\rightarrow $D$^{b}$(gr$_{(B_{n})_{Z}}$) such that the triangle:

\begin{center}
$%
\begin{array}{ccccc}
D^{b}\text{(Qgr}_{B_{n}}\text{)} &  & \overset{D^{b}(\psi )}{\rightarrow } & 
& \text{D}^{b}\text{(gr}_{\text{(B}_{n}\text{)}_{Z}}\text{)} \\ 
& Q\searrow &  & \nearrow \theta &  \\ 
&  & \text{D}^{b}\text{(Qgr}_{B_{n}}\text{)/}\mathcal{T} &  & 
\end{array}%
$,
\end{center}

commutes.

Since $D^{b}$($\psi $) is dense, so is $\theta .$

Before proving $\theta $ is an equivalence, we will need two lemmas:

\begin{lemma}
Let $K^{\circ }$, $L^{\circ }$ be complexes in $C^{b}$(gr$_{B_{n}}$) and let 
$\overset{\wedge }{\text{f}}$:$K_{Z}^{\circ }\rightarrow $ $L_{Z}^{\circ }$
be a morphism of complexes of graded ($B_{n}$)$_{Z}$-modules. Then there
exists a bounded complex of graded $B_{n}$-modules $N^{\circ }$ and a map of
complexes f:$N^{\circ }\rightarrow L^{\circ }$ such that $N_{Z}^{\circ
}\cong K_{Z}^{\circ }$ and f$_{Z}$=$\overset{\wedge }{\text{f}}.$
\end{lemma}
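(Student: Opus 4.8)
The plan is to realize the given morphism $\overset{\wedge}{\text{f}}\colon K_Z^{\circ}\to L_Z^{\circ}$ degreewise, using the density statement already available for single modules, and then to correct the resulting diagram so that it becomes an honest chain map over $B_n$ after localization. Concretely, write $K^{\circ}\colon 0\to K^{n_0}\to\cdots\to K^{n_0+\ell}\to 0$ and similarly for $L^{\circ}$. For each degree $i$, the localized component $\overset{\wedge}{\text{f}}^{\,i}\colon (K^i)_Z\to (L^i)_Z$ is a morphism of graded $(B_n)_Z$-modules; since $K^i,L^i$ are finitely generated graded $B_n$-modules, such a morphism is represented by a genuine graded $B_n$-map $f^i\colon Z^{k_i}K^i\to L^i$ with $(f^i)_Z$ isomorphic to $\overset{\wedge}{\text{f}}^{\,i}$ (this is the same lifting device used in Proposition with the functor $Q=(B_n)_Z\otimes_{B}-$). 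Replacing $K^i$ by a suitable shift $Z^{m}K^i$ uniformly (taking $m$ large enough to dominate all the $k_i$ simultaneously, exactly as in the proof of the density of $C^b(Q)$), we get graded $B_n$-maps $f^i\colon K^i\to L^i$ — note $(Z^{m}K^i)_Z\cong (K^i)_Z$ — whose localizations reassemble $\overset{\wedge}{\text{f}}$.

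The point that needs care is that this collection $\{f^i\}$ need not commute with the differentials on the nose: we only know $f^{i+1}d_K^i - d_L^i f^i$ localizes to zero, hence factors through a $Z$-torsion module. This is exactly the hypothesis of the Lemma preceding the theorem (the one producing the complex $N^{\circ}$ with the $\overset{\wedge}{\text{d}_i}$ matrices). So the next step is to invoke that Lemma — applied to the mapping-cone-type setup — to replace $K^{\circ}$ by the complex $N^{\circ}$ whose $i$-th term is $K^i\oplus \mathrm{t}_Z(\text{something})$ and whose differentials are the matrix differentials built from $d_K^i$, the connecting maps $j_i$, and the $Z$-torsion correction maps $s_i$ coming from the factorization of $f^{i+1}d_K^i-d_L^if^i$; on this corrected complex the (suitably extended) $f^i$ do assemble into an honest chain map $f\colon N^{\circ}\to L^{\circ}$. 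Since all the correction terms $\mathrm{t}_Z(-)$ are $Z$-torsion, they vanish under localization, so $N_Z^{\circ}\cong K_Z^{\circ}$ and $f_Z$ is (isomorphic to) $\overset{\wedge}{\text{f}}$, which is what is claimed.

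In carrying this out I would first set up the lifting of the components and record the uniform-shift bookkeeping (identical to what appears in Proposition on the density of $C^b(Q)$, so I would just cite that computation rather than redo it). Then I would spell out the factorization $f^{i+1}d_K^i - d_L^i f^i = j \circ s_i$ through $\mathrm{t}_Z$ of the appropriate term, observing that this is legitimate precisely because $L^{\circ}$ consists of finitely generated $B_n$-modules and $B_n$ is noetherian, so that the relevant $Z$-torsion is finitely generated and the maps exist. Finally I would apply the Lemma verbatim to get $N^{\circ}$ and the chain map, and check compatibility under localization — which is immediate since localization kills $\mathrm{t}_Z$ and $(d_i)_Z$, $(j_i)_Z=0$ make the matrix differentials $\overset{\wedge}{\text{d}_i}$ localize back to the $d^i_{K,Z}$.

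The main obstacle is the middle step: getting from a degreewise-defined family that is only "a chain map up to $Z$-torsion" to a genuine chain map, without disturbing the localization. The honest bookkeeping here is exactly packaged by the earlier Lemma, so the real work is to recognize that the failure-to-commute terms $f^{i+1}d_K^i - d_L^i f^i$ are $Z$-torsion-valued (which follows from $(f^{i+1}d_K^i - d_L^i f^i)_Z = \overset{\wedge}{\text{f}}^{\,i+1}d^i_{K,Z}-d^i_{L,Z}\overset{\wedge}{\text{f}}^{\,i}=0$ because $\overset{\wedge}{\text{f}}$ is a chain map), and to feed them correctly into that Lemma; everything else is the routine truncation-and-shift manipulation already used repeatedly in this section.
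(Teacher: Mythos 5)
Your proposal follows essentially the same route as the paper's proof: lift $\hat{f}$ degreewise to graded maps $f_i\colon Z^{k}K^{i}\rightarrow L^{i}$ with one uniform power of $Z$, observe that the defects $f_{i}d-df_{i-1}$ localize to zero and hence factor through $t_Z(L^{i})$ via maps $s_{i-1}$, and repair the failure to be a chain map by passing to $N^{i}=Z^{k}K^{i}\oplus t_Z(L^{i})$ with the matrix differentials built from $d$, the restricted differential $d'$ and the $s_i$, the chain map to $L^{\circ}$ being $(f_i,-j_i)$, all of which disappears under localization. The one caveat is that the earlier lemma, even applied to the mapping cone, only produces a corrected complex and not the chain map to $L^{\circ}$, so you still owe the short direct verification (as the paper gives) that $s_{i-1}d+d's_{i-2}=0$, so the matrix differentials square to zero, and that $(f_i,-j_i)$ commutes with them; also the factorization through $t_Z(L^{i})$ needs no noetherian hypothesis, since a map whose localization vanishes has image inside the $Z$-torsion submodule of its target.
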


\begin{proof}
Let $K^{\circ }$, $L^{\circ }$ be the complexes: $K^{\circ }$: $0\rightarrow
K^{0}\overset{d}{\rightarrow }$ $K^{1}\overset{d}{\rightarrow }...K^{n-1}%
\overset{d}{\rightarrow }K^{n}\rightarrow 0$ and $L^{\circ }$: $0\rightarrow
L^{0}\overset{d}{\rightarrow }$ $L^{1}\overset{d}{\rightarrow }...L^{n-1}%
\overset{d}{\rightarrow }L^{n}\rightarrow 0$.

Each map $\overset{\wedge }{\text{f}}_{i}$:$K_{Z}^{i}\rightarrow $ $%
L_{Z}^{i} $ lifts to a map f$_{i}$:Z$^{k_{i}}K^{i}\rightarrow L^{i}$ such
that (f$_{i}$)$_{Z}$=$\overset{\wedge }{\text{f}}_{i}$. Let k be max\{k$_{j}$%
\}. Then we have the following diagram:

$%
\begin{array}{ccccccccc}
0\rightarrow & \text{Z}^{k}K^{0} & \overset{d}{\rightarrow } & \text{Z}%
^{k}K^{1} & \overset{d}{\rightarrow } & \text{Z}^{k}K^{2} & \overset{d}{%
\rightarrow }\text{...} & \text{Z}^{k}K^{n} & \rightarrow 0 \\ 
& \downarrow \text{f}_{0} &  & \downarrow \text{f}_{1} &  & \downarrow \text{%
f}_{2} &  & \downarrow \text{f}_{n} &  \\ 
0\rightarrow & L^{0} & \overset{d}{\rightarrow } & L^{1} & \overset{d}{%
\rightarrow } & L^{2} & \overset{d}{\rightarrow }\text{...} & L^{n} & 
\rightarrow 0%
\end{array}%
$

where (df$_{i-1}$-f$_{i}$d)$_{Z}$=d$\overset{\wedge }{f}_{i-1}$-$\overset{%
\wedge }{f}_{i}$d=0.The map df$_{i-1}$-f$_{i}$d factors though t$_{Z}$(L$%
^{i} $).

There exist maps s$_{i-1}$:Z$^{k}K^{i-1}\rightarrow $t$_{Z}$($L^{i}$), j$%
_{i} $:t$_{Z}$($L^{i}$)$\rightarrow L^{i}$ such that f$_{i}$d-df$_{i-1}$=j$%
_{i}$s$_{i-1}$ and the diagrams:

$%
\begin{array}{ccc}
\text{t}_{Z}\text{(}L^{i-1}\text{)} & \overset{d^{\prime }}{\rightarrow } & 
\text{t}_{Z}\text{(}L^{i}\text{)} \\ 
\downarrow \text{j}_{i-1} &  & \downarrow \text{j}_{i} \\ 
L^{i-1} & \overset{d}{\rightarrow } & L^{i}%
\end{array}%
$

commute.

We have the following equalities: (f$_{i}$d-df$_{i-1}$)d=j$_{i}$s$_{i-1}$d,
-df$_{i-1}$d=j$_{i}$s$_{i-1}$d and \linebreak d(f$_{i-1}$d-df$_{i-2}$)=df$%
_{i-1}$f=dj$_{i-1}$s$_{i-2}$=j$_{i}$ds$_{i-2}$.

But j$_{i}$ mono implies s$_{i-1}$d+ds$_{i-2}$=0.

We have proved that the sequence\newline
$N^{\circ }$: 0$\rightarrow $Z$^{k}K^{0}\overset{\overset{\wedge }{d_{0}}}{%
\rightarrow }$Z$^{k}K^{1}\oplus $t$_{Z}$($L^{1}$)$\overset{\overset{\wedge }{%
d_{1}}}{\rightarrow }$Z$^{k}K^{2}\oplus $t$_{Z}$($L^{2}$)...Z$%
^{k}K^{n-1}\oplus $t$_{Z}$($L^{n-1}$)$\overset{\overset{\wedge }{d_{\ell -1}}%
}{\rightarrow }$Z$^{k}K^{n}\rightarrow $0, with maps: $\overset{\wedge }{%
\text{d}_{0}}$ = $\left[ 
\begin{array}{c}
\text{d} \\ 
\text{s}_{0}%
\end{array}%
\right] $, $\overset{\wedge }{\text{d}_{i}}$ =$\left[ 
\begin{array}{cc}
\text{d} & \text{0} \\ 
\text{s}_{i} & \text{d}^{\prime }%
\end{array}%
\right] $, $\overset{\wedge }{\text{d}_{\ell -1}}$=$\left[ 
\begin{array}{cc}
\text{d} & \text{0}%
\end{array}%
\right] $ is a complex of $B_{n}$-modules and (f$_{i}$,-j$_{i}$):Z$%
^{k}K^{i}\oplus $t$_{Z}$($L^{i}$)$\rightarrow L^{i}$, (f,-j):$N^{\circ
}\rightarrow L^{\circ }$ is a map of complexes such that $N_{Z}^{\circ
}\cong K_{Z}^{\circ }$ and (f,j)$_{Z}$=$\overset{\wedge }{\text{f}}$.
\end{proof}

\begin{lemma}
Let $K^{\circ }$, $L^{\circ }$ be complexes in $C^{b}$(gr$_{B_{n}}$) and $%
\overset{\wedge }{\text{f}}$:$L_{Z}^{\circ }\rightarrow $ $K_{Z}^{\circ }$
be a morphism of complexes of graded ($B_{n}$)$_{Z}$-modules which is
homotopic to zero. Then there exist bounded complexes of $B_{n}$-modules, $%
M^{\circ }$, $N^{\circ }$and a map of complexes f:$N^{\circ }\rightarrow
M^{\circ }$, such that f is homotopic to zero, $N_{Z}^{\circ }\cong
L_{Z}^{\circ }$, $M_{Z}^{\circ }\cong K_{Z}^{\circ }$ and f$_{Z}\cong $ $%
\overset{\wedge }{\text{f}}$.
\end{lemma}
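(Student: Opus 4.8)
The plan is to mimic the construction of the previous lemma (Lemma 14, the lift of a morphism of complexes), but now carrying along a homotopy instead of just a chain map. The hypothesis gives homogeneous $(B_n)_Z$-maps $\overset{\wedge}{\text{h}}_i:L_Z^i\to K_Z^{i-1}$ with $\overset{\wedge}{\text{f}}_i=\overset{\wedge}{\text{h}}_{i+1}d+d\overset{\wedge}{\text{h}}_i$. First I would use denseness (Proposition via $C^b(Q)$) to realize $K^\circ$, $L^\circ$ as $(-)_Z$ of bounded complexes of graded $B_n$-modules, and then lift each $\overset{\wedge}{\text{h}}_i$ to a graded $B_n$-map $h_i:Z^{k}L^i\to K^{i-1}$ after a single common truncation $Z^k$ (taking $k$ the max of the finitely many exponents needed), exactly as in Lemma 14. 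Setting $f_i:=h_{i+1}d+dh_i:Z^kL^i\to K^i$ gives honest $B_n$-maps with $(f_i)_Z=\overset{\wedge}{\text{f}}_i$; these $f_i$ automatically commute with the differentials up to the visible error term, so in fact $f$ will be a chain map from $Z^kL^\circ$ to $K^\circ$ which is honestly null-homotopic via the $h_i$ — except that $Z^kL^\circ$ and $K^\circ$ need not be genuine complexes of $B_n$-modules: the compositions $d_Kd_K$ and $d_Ld_L$ only vanish after localizing, i.e. they factor through $Z$-torsion.

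The main technical work, and the expected obstacle, is therefore to replace $Z^kL^\circ$ and $K^\circ$ by genuine $B_n$-complexes $N^\circ$ and $M^\circ$ — the ``corrected'' complexes built by adjoining the $Z$-torsion modules $t_Z(-)$ with the matrix differentials $\overset{\wedge}{\text{d}}_i=\bigl[\begin{smallmatrix} d & 0\\ s_i & d'\end{smallmatrix}\bigr]$ of Lemma 13/14 — and to check that the lifted map and the homotopy extend compatibly to these enlarged complexes. Concretely, $M^\circ$ is the correction of $K^\circ$ with terms $K^i\oplus t_Z(K^i)$ (shifted appropriately) and $N^\circ$ the correction of $Z^kL^\circ$ with terms $Z^kL^i\oplus t_Z(L^i)$; one extends $h_i$ to $\overset{\wedge}{\text{h}}_i:N^i\to M^{i-1}$ by $h_i$ on the first summand and (say) the relevant inclusion/zero on the $t_Z$-summand, and similarly extends $f_i$. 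Because $t_Z(-)_Z=0$, applying $(-)_Z$ kills all the correction summands and recovers $\overset{\wedge}{\text{f}}$ together with its given homotopy $\overset{\wedge}{\text{h}}$, giving $f_Z\cong\overset{\wedge}{\text{f}}$ with $f$ null-homotopic; and $N_Z^\circ\cong L_Z^\circ$, $M_Z^\circ\cong K_Z^\circ$ as required.

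The verification that splits into routine but fiddly diagram-chases is: (i) $\overset{\wedge}{\text{d}}_{i+1}\overset{\wedge}{\text{d}}_i=0$ (already done in Lemma 13); (ii) the extended $\overset{\wedge}{\text{f}}$ is a chain map $N^\circ\to M^\circ$ — this uses $s_id+d's_{i-1}=0$ from Lemma 14 plus the compatibility of $h_i$ with $d'$ on the torsion parts; and (iii) $\overset{\wedge}{\text{f}}=\overset{\wedge}{\text{d}}_{M}\overset{\wedge}{\text{h}}+\overset{\wedge}{\text{h}}\,\overset{\wedge}{\text{d}}_{N}$ on the enlarged complexes, where again the off-diagonal entries involving the $s_i$ and the inclusions $j_i$ must be arranged to cancel. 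I would choose the extension of $h_i$ to the torsion summand — most naturally $h_i$ acts as the composite $t_Z(L^i)\hookrightarrow Z^kL^i\xrightarrow{h_i}K^{i-1}$ precomposed with a suitable projection, or simply $0$, picking whichever makes (iii) hold — and define the component of $\overset{\wedge}{\text{h}}$ landing in $t_Z(K^{i-1})$ to absorb the discrepancy; that choice is exactly the delicate point. Once these three matrix identities are checked, applying $Q=(-)_Z$ throughout and invoking that $t_Z(-)$ is $Z$-torsion (hence killed by localization) finishes the proof.
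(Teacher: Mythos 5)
Your first paragraph already contains the paper's entire proof; the rest of the proposal rests on a misreading of the hypothesis. The lemma assumes $K^{\circ}$ and $L^{\circ}$ are complexes in $C^{b}(\mathrm{gr}_{B_{n}})$, so $d_{K}d_{K}=0$ and $d_{L}d_{L}=0$ hold on the nose; there is nothing to ``realize'' via denseness (they are already honest complexes of graded $B_{n}$-modules, and $\hat{f}$ lives between their localizations), and no torsion-corrected complexes are needed. Once you lift the homotopy $\hat{h}_{i}$ to graded maps $h_{i}:Z^{k}L^{i}\rightarrow K^{i-1}$ and set $f_{i}:=h_{i+1}d+dh_{i}$, you are done: $df_{i}=dh_{i+1}d=f_{i+1}d$ (using both $dd=0$ identities), so $f$ is a genuine chain map $Z^{k}L^{\circ}\rightarrow K^{\circ}$, null-homotopic via the $h_{i}$ by construction, and $(f_{i})_{Z}=\hat{h}_{i+1}d_{Z}+d_{Z}\hat{h}_{i}=\hat{f}_{i}$; take $N^{\circ}=Z^{k}L^{\circ}$ and $M^{\circ}=K^{\circ}$. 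This is exactly the paper's argument, phrased slightly differently there: the paper also lifts $\hat{f}_{i}$ to $f_{i}'$, observes $(f_{i}'-(t_{i+1}d+dt_{i}))_{Z}=0$ so the discrepancy factors through $t_{Z}(K^{i})$, and replaces $f_{i}'$ by $f_{i}=f_{i}'+j_{i}v_{i}=t_{i+1}d+dt_{i}$, which leaves the localization unchanged.

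The defect in the proposal as written is in the second and third paragraphs: the claim that $Z^{k}L^{\circ}$ and $K^{\circ}$ ``need not be genuine complexes'' is false under the stated hypotheses (you appear to be conflating this lemma with the preceding one, where torsion summands $t_{Z}(L^{i})$ are adjoined to the source --- but there the purpose is to repair the chain-map property of the lifted map, not to repair a failure of $dd=0$). Moreover, the construction you then call ``the main technical work'' --- extending $f$ and the homotopy to the enlarged complexes and verifying your identity (iii) --- is not actually carried out: you explicitly defer the crucial choice to ``whichever makes (iii) hold''. If that route were genuinely required, this would be a real gap. As it stands it is simply unnecessary: deleting those paragraphs leaves a complete and correct proof that coincides with the paper's.
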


\begin{proof}
Consider the following diagram:

$%
\begin{array}{ccccccccc}
0\rightarrow & L_{Z}^{0} & \overset{d_{Z}}{\rightarrow } & L_{Z}^{1} & 
\overset{d_{Z}}{\rightarrow } & L_{Z}^{2} & \overset{d_{Z}}{\rightarrow }%
\text{...} & L_{Z}^{m} & \rightarrow 0 \\ 
& \downarrow \overset{\wedge }{\text{f}}_{0} & \text{s}_{1}\swarrow & 
\downarrow \overset{\wedge }{\text{f}_{1}} & \text{s}_{2}\swarrow & 
\downarrow \overset{\wedge }{\text{f}}_{2} & \text{s}_{m}\swarrow & 
\downarrow \overset{\wedge }{\text{f}}_{m} &  \\ 
0\rightarrow & K_{Z}^{0} & \overset{d_{Z}}{\rightarrow } & K_{Z}^{1} & 
\overset{d_{Z}}{\rightarrow } & K_{Z}^{2} & \overset{d_{Z}}{\rightarrow }%
\text{...} & L_{Z}^{n} & \rightarrow 0%
\end{array}%
$,

where s:$L_{Z}^{\circ }\rightarrow K_{Z}^{\circ }$[-1] is the homotopy,
hence $\overset{\wedge }{\text{f}}_{i}$=d$_{Z}$s$_{i}$+s$_{i+1}$d$_{Z}$.

For each $i$ there exist integers k$_{i}$, k$_{i}^{\prime }$ and maps t$_{i}$%
: Z$^{k_{i}}L^{i}\rightarrow K^{i-1}$ and f$_{i}^{\prime }$: Z$%
^{k_{i}^{\prime }}L^{i}\rightarrow K^{i}$ such that (t$_{i}$)$_{Z}$=s$_{i}$
and (f$_{i}^{\prime }$)$_{Z}$=$\overset{\wedge }{\text{f}}_{i}$. Taking
k=max\{k$_{j}$\}we have maps:

$%
\begin{array}{ccccccccc}
0\rightarrow & \text{Z}^{k}L^{0} & \overset{d}{\rightarrow } & \text{Z}%
^{k}L^{1} & \overset{d}{\rightarrow } & \text{Z}^{k}L^{2} & \overset{d}{%
\rightarrow }\text{...} & \text{Z}^{k}L^{m} & \rightarrow 0 \\ 
& \downarrow \text{f}_{0}^{\prime } & \text{t}_{1}\swarrow & \downarrow 
\text{f}_{1}^{\prime } & \text{t}_{2}\swarrow & \downarrow \text{f}%
_{2}^{\prime } & \text{t}_{m}\swarrow & \downarrow \text{f}_{m}^{\prime } & 
\\ 
0\rightarrow & K^{0} & \overset{d}{\rightarrow } & K^{1} & \overset{d}{%
\rightarrow } & K^{2} & \overset{d}{\rightarrow }\text{...} & K^{m} & 
\rightarrow 0%
\end{array}%
$

Consider the map: (f$_{i}^{\prime }$-(t$_{i+1}$d+dt$_{i}$))$_{Z}$=$\overset{%
\wedge }{\text{f}}_{i}$-(s$_{i+1}$d$_{Z}$+d$_{Z}$s$_{i}$=0. As
above,\linebreak\ f$_{i}^{\prime }$-(t$_{i+1}$d+dt$_{i}$) factors through a
Z-torsion module and there exist maps: \linebreak v$_{i}$:Z$%
^{k}L^{i}\rightarrow $t$_{Z}$($K^{i}$) and inclusions j$_{i}$:t$_{Z}$($K^{i}$%
)$\rightarrow K^{i}$ such that f$_{i}^{\prime }$-(t$_{i+1}$d+dt$_{i}$)=-j$%
_{i}$v$_{i}$ or f$_{i}^{\prime }$+j$_{i}$v$_{i}$=t$_{i+1}$d+dt$_{i}$.

Set f$_{i}$=f$_{i}^{\prime }$+j$_{i}$v$_{i}$.Then (f$_{i}$)$_{Z}$=(f$%
_{i}^{\prime }$)$_{Z}$=$\overset{\wedge }{\text{f}}_{i}$.

But we have now f$_{i}$=t$_{i+1}$d+dt$_{i}$,f$_{i-1}$=t$_{i}$d+dt$_{i-1}$
imply f$_{i}$d=dt$_{i}$d=f$_{i-1}$d.
\end{proof}

We can prove now the theorem.

i) $\theta $ is faithful.

Let $\mathcal{T}_{1}$ be $\mathcal{T}_{1}$=\{$X^{\circ }\in C^{b}$(gr$%
_{B_{n}}$)$\mid $H$^{i}$($X^{\circ }$) is torsion for all i\} and $\mathcal{T%
}_{2}$=\{$X^{\circ }\in C^{b}$(gr$_{B_{n}}$)$\mid $H$^{i}$($X^{\circ }$) is
Z-torsion for all i\}.

A map in D$^{b}$(Qgr$_{B_{n}}$)/$\mathcal{T}$ can be written as follows:

\begin{center}
$%
\begin{array}{ccccccccc}
&  & \pi K^{\circ } &  &  &  & \pi L^{\circ } &  &  \\ 
& \pi \text{f}\nearrow &  & \nwarrow \pi \text{s} &  & \pi \text{t}\nearrow
&  & \nwarrow \pi \text{g} &  \\ 
\pi X^{\circ } &  &  &  & \pi Y^{\circ } &  &  &  & \pi Z^{\circ }%
\end{array}%
$
\end{center}

where t, s$\in \Psi $($\mathcal{T}_{1}$) and g$\in \Psi $($\mathcal{T}_{2}$).

In $K^{b}$(gr$_{B_{n}}$) we have maps:

\begin{center}
$%
\begin{array}{ccccccccc}
&  & K^{\circ } &  &  &  & L^{\circ } &  &  \\ 
& \text{f}\nearrow &  & \nwarrow \text{s} &  & \text{t}\nearrow &  & 
\nwarrow \text{g} &  \\ 
X^{\circ } &  &  &  & Y^{\circ } &  &  &  & Z^{\circ }%
\end{array}%
$
\end{center}

We have an exact sequence of complexes:

\begin{center}
$%
\begin{array}{ccccccc}
0\rightarrow & Y^{\circ } & \overset{\mu }{\rightarrow } & K^{\circ }\oplus
L^{\circ }\oplus I^{\circ } & \overset{\upsilon }{\rightarrow } & W^{\circ }
& \rightarrow 0%
\end{array}%
$
\end{center}

Where $I^{\circ }$ is a complex which is a sum of complexes of the form: $%
0\rightarrow X\overset{1}{\rightarrow }X\rightarrow 0$, hence acyclic. The
maps $\mu ,\upsilon $ are of the form: $\mu =\left[ 
\begin{array}{c}
\text{s} \\ 
\text{t} \\ 
\text{u}%
\end{array}%
\right] $ and $\upsilon =\left[ 
\begin{array}{ccc}
\text{t}^{\prime } & \text{s}^{\prime } & \text{v}%
\end{array}%
\right] .$

By the long homology sequence, there is an exact sequence: *)

\begin{center}
...$\rightarrow $H$^{i+1}$($W^{\circ }$)$\rightarrow $H$^{i}$($Y^{\circ }$)$%
\overset{H^{i}(\mu )}{\rightarrow }$H$^{i}$($K^{\circ }$)$\oplus $H$^{i}$($%
L^{\circ }$)$\overset{H^{i}(\upsilon )}{\rightarrow }$H$^{i}$($W^{\circ }$)$%
\rightarrow $H$^{i-1}$($Y^{\circ }$)$\rightarrow $...
\end{center}

Since $\pi $ is an exact functor, for any complex $\pi $H$^{i}$($X^{\circ }$)%
$\cong $H$^{i}$($\pi X^{\circ }$) and the exact sequence *) induces an exact
sequence: **)

\begin{center}
...$\rightarrow \pi $H$^{i+1}$($W^{\circ }$)$\rightarrow \pi $H$^{i}$($%
Y^{\circ }$)$\overset{\pi H^{i}(\mu )}{\rightarrow }\pi $H$^{i}$($K^{\circ }$%
)$\oplus \pi $H$^{i}$($L^{\circ }$)$\overset{\pi H^{i}(\upsilon )}{%
\rightarrow }\pi $H$^{i}$($W^{\circ }$)$\rightarrow \pi $H$^{i-1}$($Y^{\circ
}$)$\rightarrow $...
\end{center}

Which is isomorphic to the complex:

\begin{center}
...$\rightarrow $H$^{i+1}$($\pi W^{\circ }$)$\rightarrow $H$^{i}$($\pi
Y^{\circ }$)$\overset{H^{i}(\pi \mu )}{\rightarrow }$H$^{i}$($\pi K^{\circ }$%
)$\oplus $H$^{i}$($\pi L^{\circ }$)$\overset{H^{i}(\pi \upsilon )}{%
\rightarrow }$H$^{i}$($\pi W^{\circ }$)$\rightarrow $H$^{i-1}$($\pi Y^{\circ
}$)$\rightarrow $...
\end{center}

The maps H$^{i}$($\pi $s), H$^{i}$($\pi $t) are isomorphisms. Hence it
follows H$^{i}$($\pi \mu $) is for each i a splittable monomorphism and for
each $i$ there is an exact sequence:

\begin{center}
0$\rightarrow $H$^{i}$($\pi Y^{\circ }$)$\overset{H^{i}(\pi \mu )}{%
\rightarrow }$H$^{i}$($\pi K^{\circ }$)$\oplus $H$^{i}$($\pi L^{\circ }$)$%
\overset{H^{i}(\pi \upsilon )}{\rightarrow }$H$^{i}$($\pi W^{\circ }$)$%
\rightarrow $0
\end{center}

which can be embedded in a commutative exact diagram:

\begin{center}
$%
\begin{array}{ccccccc}
&  &  & 0 &  & 0 &  \\ 
&  &  & \downarrow &  & \downarrow &  \\ 
& 0 &  & \text{H}^{i}\text{(}\pi L^{\circ }\text{)} & \overset{1}{%
\rightarrow } & \text{H}^{i}\text{(}\pi L^{\circ }\text{)} &  \\ 
& \downarrow &  & \downarrow \left[ 
\begin{array}{c}
\text{0} \\ 
\text{1}%
\end{array}%
\right] &  & \downarrow \text{H}^{i}\text{(}\pi \text{s}^{\prime }\text{)} & 
\\ 
0\rightarrow & \text{H}^{i}\text{(}\pi Y^{\circ }\text{)} & \rightarrow & 
\text{H}^{i}\text{(}\pi K^{\circ }\text{)}\oplus \text{H}^{i}\text{(}\pi
L^{\circ }\text{)} & \rightarrow & \text{H}^{i}\text{(}\pi W^{\circ }\text{)}
& \rightarrow 0 \\ 
& \downarrow \text{H}^{i}\text{(}\pi \text{s)} &  & \downarrow \left[ 
\begin{array}{cc}
\text{1} & \text{0}%
\end{array}%
\right] &  & \downarrow &  \\ 
0\rightarrow & \text{H}^{i}\text{(}\pi K^{\circ }\text{)} & \overset{1}{%
\rightarrow } & \text{H}^{i}\text{(}\pi K^{\circ }\text{)} & \rightarrow & 0
&  \\ 
& \downarrow &  & \downarrow &  &  &  \\ 
& 0 &  & 0 &  &  & 
\end{array}%
$
\end{center}

By this and a similar diagram it follows H$^{i}$($\pi $s$^{\prime }$),H$^{i}$%
($\pi $t$^{\prime }$) are isomorphisms.

We have a commutative diagram in $K^{b}$(Qgr$_{B_{n}}$):

\begin{center}
\bigskip $%
\begin{array}{ccccccccc}
&  &  &  &  &  &  &  &  \\ 
&  &  &  & \pi W^{\circ } &  &  &  &  \\ 
&  &  & \pi \text{t}^{\prime }\nearrow &  & \nwarrow \pi \text{s}^{\prime }
&  &  &  \\ 
&  & \pi K^{\circ } &  &  &  & \pi L^{\circ } &  &  \\ 
& \pi \text{f}\nearrow &  & \nwarrow \pi s &  & \pi t\nearrow &  & \nwarrow
\pi \text{g} &  \\ 
\pi X^{\circ } &  &  &  & \pi Y^{\circ } &  &  &  & \pi Z^{\circ }%
\end{array}%
$
\end{center}

Then $\theta $(($\pi $g)$^{-1}\pi $t($\pi $s)$^{-1}\pi $f)=D$^{b}$($\psi $)((%
$\pi $s$^{\prime }\pi $g)$^{-1}\pi $t$^{\prime }\pi $f))=(s$_{Z}^{\prime }$g$%
_{Z}$)$^{-1}$t$_{Z}^{\prime }$f$_{Z}$=0.

But s$_{Z}^{\prime }$, g$_{Z}$, t$_{Z}^{\prime }$ are isomorphisms in D$^{b}$%
(gr$_{B_{z}}$). It follows f$_{Z}$=0 in D$^{b}$(gr$_{B_{z}}$).

Then there is a quasi isomorphism of complexes $\upsilon :\overset{\wedge }{N%
}^{\circ }\rightarrow X_{Z}^{\circ }$ such that f$_{Z}\upsilon $ is
homotopic to zero. By Lemma 3, there is a bounded complex $N^{\circ }$ of $%
B_{n}$-modules and a map $\nu :N^{\circ }\rightarrow X^{\circ }$such that $%
N_{Z}^{\circ }\cong \overset{\wedge }{N}^{\circ }$and $\nu _{Z}$ can be
identified with $\upsilon .$

According to Lemma 3. there is an integer k$\geq $0 such that the
composition of maps Z$^{k}N^{\circ }\overset{res\nu }{\rightarrow }X^{\circ }%
\overset{f}{\rightarrow }K^{\circ }$ is homotopic to zero and (res$\nu $)$%
_{Z}$=$\nu _{Z}$ is a quasi isomorphism. This implies res$\nu \in \Psi $($%
\mathcal{T}_{2}$) and $\pi $f=0 in D$^{b}$(Qgr$_{B_{n}}$)/$\mathcal{T}$.

Therefore $\pi $g)$^{-1}\pi $t($\pi $s)$^{-1}\pi $f=0 in D$^{b}$(Qgr$%
_{B_{n}} $)/$\mathcal{T}.$

ii) $\theta $ is full.

Let

\begin{center}
$%
\begin{array}{ccccc}
&  & K_{Z}^{\circ } &  &  \\ 
& \overset{\wedge }{\text{s}}\swarrow &  & \searrow \overset{\wedge }{\text{f%
}} &  \\ 
X_{Z}^{\circ } &  &  &  & Y_{Z}^{\circ }%
\end{array}%
$
\end{center}

be a map in D$^{b}$(gr$_{(B_{n})_{Z}}$). By lemma 1, there exists a complex:%
\newline
$N^{\circ }$:0$\rightarrow $Z$^{k}K^{0}\overset{\overset{\wedge }{d_{0}}}{%
\rightarrow }$ Z$^{k}K^{1}\oplus $t$_{Z}$($Y^{1}$)$\overset{\overset{\wedge }%
{d_{1}}}{\rightarrow }$ Z$^{k}K^{2}\oplus $t$_{Z}$($Y^{2}$)...Z$%
^{k}K^{n-1}\oplus $t$_{Z}$($Y^{n-1}$)$\overset{\overset{\wedge }{d_{\ell -1}}%
}{\rightarrow }$Z$^{k}K^{n}\rightarrow $0, where the differentials are of
the form: $\overset{\wedge }{\text{d}_{0}}$ = $\left[ 
\begin{array}{c}
\text{d} \\ 
\text{s}_{0}%
\end{array}%
\right] $, $\overset{\wedge }{\text{d}_{i}}$ =$\left[ 
\begin{array}{cc}
\text{d} & \text{0} \\ 
\text{s}_{i} & \text{d}^{\prime }%
\end{array}%
\right] $, $\overset{\wedge }{\text{d}_{\ell -1}}$=$\left[ 
\begin{array}{cc}
\text{d} & \text{0}%
\end{array}%
\right] $ and a map f:$N^{\circ }\rightarrow Y^{\circ }$such that $%
N_{Z}^{\circ }\cong K_{Z}^{\circ }$ and f$_{Z}$= $\overset{\wedge }{\text{f}}
$. Changing $N^{\circ }$ for $K^{\circ }$ we may assume $\overset{\wedge }{%
\text{f}}$ is a localized map f$_{Z}$ and get a roof:

\begin{center}
$%
\begin{array}{ccccc}
&  & N_{Z}^{\circ } &  &  \\ 
& \overset{\wedge }{\text{s}}\swarrow &  & \searrow \text{f}_{Z} &  \\ 
X_{Z}^{\circ } &  &  &  & Y_{Z}^{\circ }%
\end{array}%
.$
\end{center}

We now lift $\overset{\wedge }{\text{s}}$ to a map of complexes s:$\overset{%
\wedge }{N}^{\circ }\rightarrow X^{\circ }$:

\begin{center}
$%
\begin{array}{cccccccccc}
\text{0}\rightarrow & \text{Z}^{k}\text{N}^{0} & \overset{\overset{\wedge }{d%
}_{0}}{\rightarrow } & \text{Z}^{k}\text{N}^{1}\oplus \text{t}_{Z}\text{(X}%
^{1}\text{)} & \overset{\overset{\wedge }{d}_{1}}{\rightarrow } & \text{Z}%
^{k}\text{N}^{2}\oplus \text{t}_{Z}\text{(X}^{2}\text{)} & \text{...} & 
\overset{\overset{\wedge }{d_{m-1}}}{\rightarrow } & \text{Z}^{k}\text{N}^{m}
& \rightarrow \text{0} \\ 
\text{s:} & \downarrow \text{s}_{0} &  & \downarrow \text{s}_{1} &  & 
\downarrow \text{s}_{2} &  &  & \downarrow \text{s}_{m} &  \\ 
\text{0}\rightarrow & \text{X}^{0} & \overset{d}{\rightarrow } & \text{X}^{1}
& \overset{d}{\rightarrow } & \text{X}^{2} & \text{...} & \overset{d}{%
\rightarrow } & \text{X}^{m} & \rightarrow \text{0}%
\end{array}%
$
\end{center}

with s$_{z}$=$\overset{\wedge }{\text{s}}$.

We have a commutative diagram:\newline
$%
\begin{array}{cccccccccc}
\text{0}\rightarrow & \text{Z}^{k}\text{N}^{0} & \overset{\overset{\wedge }{d%
}_{0}}{\rightarrow } & \text{Z}^{k}\text{N}^{1}\oplus \text{t}_{Z}\text{(X}%
^{1}\text{)} & \overset{\overset{\wedge }{d}_{1}}{\rightarrow } & \text{Z}%
^{k}\text{N}^{2}\oplus \text{t}_{Z}\text{(X}^{2}\text{)} & \text{...} & 
\overset{\overset{\wedge }{d_{m-1}}}{\rightarrow } & \text{Z}^{k}\text{N}^{m}
& \rightarrow \text{0} \\ 
\text{(10}) & \downarrow \text{1} &  & \downarrow \text{(10)} &  & 
\downarrow \text{(10)} &  &  & \downarrow \text{1} &  \\ 
\text{0}\rightarrow & \text{Z}^{k}\text{N}^{0} & \overset{d}{\rightarrow } & 
\text{Z}^{k}\text{N}^{1} & \overset{d}{\rightarrow } & \text{Z}^{k}\text{N}%
^{2} & \text{...} & \overset{d}{\rightarrow } & \text{Z}^{k}\text{N}^{m} & 
\rightarrow \text{0}%
\end{array}%
$

We obtain the following roof:

\begin{center}
$%
\begin{array}{ccccccc}
&  & \overset{\wedge }{N}^{\circ } &  &  &  &  \\ 
& \text{{\small s}}\swarrow &  & \searrow \text{{\small (10)}} &  &  &  \\ 
X^{\circ } &  &  &  & \text{Z}^{k}N^{\circ } &  &  \\ 
&  &  &  &  & \searrow \text{{\small f}} &  \\ 
&  &  &  &  &  & Y^{\circ }%
\end{array}%
$
\end{center}

Localizing we obtain:

\begin{center}
$%
\begin{array}{ccccccc}
&  & \overset{\wedge }{N_{Z}}^{\circ } &  &  &  &  \\ 
& \text{{\small s}}_{Z}\swarrow &  & \searrow \text{{\small (10)}}_{Z} &  & 
&  \\ 
X_{Z}^{\circ } &  &  &  & \text{Z}^{k}N_{Z}^{\circ } &  &  \\ 
&  &  &  &  & \searrow \text{{\small f}}_{Z} &  \\ 
&  &  &  &  &  & Y_{Z}^{\circ }%
\end{array}%
$
\end{center}

with $\overset{\wedge }{N_{Z}}^{\circ }\overset{{\small (10)}_{Z}}{%
\rightarrow }$Z$^{k}N_{Z}^{\circ }\cong N_{Z}^{\circ }$ isomorphisms, s$_{Z}$%
=$\overset{\wedge }{\text{s}}$, f$_{z}$=$\overset{\wedge }{\text{f}}$.

We have proved $\theta $ is full.

\section{The category of $\mathcal{T}$-local objects.}

Let $\mathcal{F}$ be the full subcategory of D$^{b}$(Qgr$_{B_{n}}$)
consisting of $\mathcal{T}$-local objects, this is: $\mathcal{F}$=\{$%
X^{\circ }\in $D$^{b}$(Qgr$_{B_{n}}$)$\mid $Hom$_{D^{b}(Qgr_{B_{n}})}$($%
\mathcal{T}$,$X^{\circ }$)=0\}.

According to [Mi], Prop. 9.8, for each $Y^{\circ }\in $D$^{b}$(Qgr$_{B_{n}}$%
) and $X^{\circ }\in \mathcal{F}$, \linebreak\ Hom$_{D^{b}(Qgr_{B_{n}})}$($%
Y^{\circ }$,$X^{\circ }$)=Hom$_{D^{b}(Qgr_{B_{n}})/\mathcal{T}}$($QY^{\circ
} $,$QX^{\circ }$)$\cong $Hom$_{D^{b}(gr_{(B_{n})_{Z}})}$($\psi Y^{\circ }$,$%
\psi X^{\circ }$).

In particular there is a full embedding of $\mathcal{F}$ in D$^{b}$(gr$%
_{(B_{n})_{Z}}$).

According to [MM] and [MS] there is a duality of triangulated
categories\linebreak\ $\overline{\phi }$ : \underline{\text{gr}}$%
_{_{B_{n}^{!op}}}\rightarrow $D$^{b}$(Qgr$_{B_{n}}$) induced by the duality $%
\phi :$gr$_{_{_{B_{n}^{!op}}}}\rightarrow \mathcal{LC}$P$_{B_{n}}$, with $%
\mathcal{LC}$P$_{B_{n}}$ the category of linear complexes of graded
projective $B_{n}$-modules. If $M=\underset{i\geq k_{0}}{\oplus }M_{i}$ is a
graded $B_{n}^{!op}$-module, then $\phi $($M$) is a complex of the form:%
\newline
D(M)$\otimes _{B_{0}}B_{n}:\rightarrow $...D($M_{k_{0}+n}$)$\otimes
_{B_{0}}B_{n}$[-k$_{0}$-n]$\rightarrow D$(M$_{k_{0}+n-1}$)$\otimes
_{B_{0}}B_{n}$[-k$_{0}$-n+1]$\rightarrow $...

D($M_{k_{0}+1}$)$\otimes _{B_{0}}B_{n}$[-k$_{0}$-1]$\rightarrow $D($%
M_{k_{0}} $)$\otimes _{B_{0}}B_{n}$[-k$_{0}$]$\rightarrow $0$.$

$\overline{\phi }$($M$) is the complex:\newline
$\pi ($D(M)$\otimes _{B_{0}}B_{n}$)$:\rightarrow $...$\pi ($D($M_{k_{0}+n}$)$%
\otimes _{B_{0}}B_{n}$[-k$_{0}$-n])$\rightarrow \pi (D$(M$_{k_{0}+n-1}$)$%
\otimes _{B_{0}}B_{n}$[-k$_{0}$-n+1])\linebreak $\rightarrow $...$\pi ($D($%
M_{k_{0}+1}$)$\otimes _{B_{0}}B_{n}$[-k$_{0}$-1])$\rightarrow \pi ($D($%
M_{k_{0}}$)$\otimes _{B_{0}}B_{n}$[-k$_{0}$])$\rightarrow $0$.$

If we compose with the usual duality we obtain an equivalence of
triangulated categories: $\overline{\phi }$D: \underline{gr}$%
_{_{B_{n}^{!op}}}\rightarrow $D$^{b}$(Qgr$_{B_{n}}$).

Under the duality $\overline{\phi }$ there is a pair ($\mathcal{F}^{\prime }$%
,$\mathcal{T}^{\prime }$) such that $\mathcal{F}^{\prime }\rightarrow 
\mathcal{F}$ and $\mathcal{T}^{\prime }\rightarrow \mathcal{T}$ corresponds
to the pair ($\mathcal{T}$,$\mathcal{F}$).

We want to characterize the subcategories $\mathcal{F}^{\prime },\mathcal{T}%
^{\prime }$of \underline{gr}$_{_{B_{n}^{!}}}$.

We shall start by recalling some properties of the finitely generated graded 
$B_{n}$-modules.

The algebra $B_{n}$ is a Koszul algebra of finite global dimension, under
such conditions, for any finitely generated graded $B_{n}$-module $M$ there
is a truncation $M_{\geq k}$ such that $M_{\geq k}$[k] is Koszul [M2. But in
Qgr$_{B_{n}}$ the objects $\pi M$ and $\pi M_{\geq k}$ are isomorphic, hence
we can consider only Koszul $B_{n}$-modules and their shifts. Assume $M$ is
finitely generated but of infinite dimension over $\Bbbk $. The torsion part
t($M$) is finite dimensional over $\Bbbk $, hence there is a torsion free
truncation $M_{\geq k}$ of $M$, so we may assume $M$ torsion free and Koszul.

Let%
\'{}%
s suppose $M$ is of Z-torsion.There exists an integer n such that Z$%
^{n-1}M\neq $0 and Z$^{n}M$=0. There is a filtration $M\supset $Z$M\supset $Z%
$^{2}M$...$\supset $Z $^{n-1}M\supset 0$. Since $Z$ is an element of degree
one (Z$M$)$_{i}$=Z$M_{i-1}$, which implies (Z$^{j}M$)$_{\geq k}$=Z$^{j}$($M$ 
$_{\geq k-j}$).

Truncation of Koszul is Koszul and we can take large enough truncation in
order to have (Z$^{j}M$)$_{\geq k}$ Koszul for all $j$. Changing $M$ for $%
M_{\geq k}$ we may assume all Z$^{j}M$ are Koszul. [GM1],[GM2].

There is a commutative exact diagram:

\begin{center}
$%
\begin{array}{ccccccccc}
&  &  & 0 &  & 0 &  &  &  \\ 
&  &  & \downarrow &  & \downarrow &  &  &  \\ 
&  & 0\rightarrow & \Omega \text{(}M\text{)} & \rightarrow & \Omega \text{(}M%
\text{/Z}M\text{)} & \rightarrow & \text{Z}M & \rightarrow 0 \\ 
&  &  & \downarrow &  & \downarrow &  &  &  \\ 
&  &  & P & \overset{1}{\rightarrow } & P &  &  &  \\ 
&  &  & \downarrow &  & \downarrow &  &  &  \\ 
0\rightarrow & \text{Z}M & \rightarrow & M & \rightarrow & M\text{/Z}M & 
\rightarrow 0 &  &  \\ 
&  &  & \downarrow &  & \downarrow &  &  &  \\ 
&  &  & 0 &  & 0 &  &  & 
\end{array}%
$
\end{center}

the modules $\Omega $($M$), Z$M$ are Koszul generated in the same degree, it
follows $M$/Z$M$ is Koszul and for any integer k$\geq $1 there is an exact
sequence:

$%
\begin{array}{ccccccc}
0\rightarrow & \Omega ^{k}\text{(}M\text{)} & \rightarrow & \Omega ^{k}\text{%
(}M\text{/Z}M\text{)} & \rightarrow & \Omega ^{k-1}\text{(Z}M\text{)} & 
\rightarrow 0%
\end{array}%
$. By $[$GM1$]$ there is an exact sequence:

$0\rightarrow $Hom$_{B_{n}}$($\Omega ^{k-1}$(Z$M$),$B_{_{n}0}$)$\rightarrow $%
Hom$_{B_{n}}$($\Omega ^{k}$($M$/Z$M$),$B_{_{n}0}$)$\rightarrow $

Hom$_{B_{n}}$($\Omega ^{k}$($M$),$B_{_{n}0}$)$\rightarrow 0$ or an exact
sequence:

*) $0\rightarrow $Ext$_{B_{n}}^{k-1}$(Z$M$,$B_{_{n}0}$)$\rightarrow $Ext$%
_{B_{n}}^{k}$($M$/Z$M$,$B_{_{n}0}$)$\rightarrow $Ext$_{B_{n}}^{k}$($M$,$%
B_{_{n}0}$)$\rightarrow 0$.

We will denote by $F_{B_{n}}$($N$)=$\underset{k\geq 0}{\oplus }$Ext$%
_{B_{n}}^{k}$($N$,$B_{_{n}0}$) the Koszul duality functor \linebreak $%
F_{B_{n}}$:$K_{B_{n}}\rightarrow K_{B_{n}^{!}}$ .

Adding all sequences *) we obtain an exact sequence:

$0\rightarrow F_{B_{n}}$(Z$M$)[-1]$\rightarrow F_{B_{n}}$($M$/Z$M$)$%
\rightarrow F_{B_{n}}$($M$)$\rightarrow 0$

We can apply the same argument to any module $Z^{j}M$ to get an exact
sequence:

$0\rightarrow F_{B_{n}}$(Z$^{j+1}M$)[-j-1]$\rightarrow F_{B_{n}}$(Z$^{j}M$/Z$%
^{j+1}M$)[-j]$\rightarrow F_{B_{n}}$(Z$^{j}M$)[-j]$\rightarrow $0.

Gluing all short exact sequences we obtain a long exact sequence of Koszul
up to shifting $B_{n}^{!}$-modules:

**) $0\rightarrow F_{B_{n}}$(Z$^{n-1}M$)[-n+1]$\rightarrow F_{B_{n}}$(Z$%
^{n-2}M$/Z$^{n-1}M$)[-n+2]...$\rightarrow $

$F_{B_{n}}$($M$/Z$M$)$\rightarrow F_{B_{n}}$($M$)$\rightarrow 0$

It will be enough to study non semisimple Koszul $B_{n}$-modules $N$ such
that Z$N=0$. They can be considered as $C_{n}$-modules.

We have the following commutative exact diagram:

\begin{center}
$%
\begin{array}{ccccccc}
& 0 &  & 0 &  &  &  \\ 
& \downarrow &  & \downarrow &  &  &  \\ 
& \text{Z}B_{n}^{n_{0}} & \overset{\text{1}}{\rightarrow } & \text{Z}%
B_{n}^{n_{0}} &  &  &  \\ 
& \downarrow &  & \downarrow &  &  &  \\ 
0\rightarrow & \Omega _{B}\text{(}N\text{)} & \rightarrow & B_{n}^{n_{0}} & 
\rightarrow & N & \rightarrow 0 \\ 
& \downarrow &  & \downarrow &  & \downarrow \text{1} &  \\ 
0\rightarrow & \Omega _{C}\text{(}N\text{)} & \rightarrow & C_{n}^{n_{0}} & 
\rightarrow & N & \rightarrow 0 \\ 
& \downarrow &  & \downarrow &  &  &  \\ 
& 0 &  & 0 &  &  & 
\end{array}%
$
\end{center}

The algebra $B_{n}$ is an integral domain and in consequence the free $B_{n}$%
-modules are torsion free and Z$B_{n}^{n_{0}}$is isomorphic to $%
B_{n}^{n_{0}} $[-1].

The exact sequence: $0\rightarrow $Z$B_{n}^{n_{0}}\rightarrow \Omega _{B}$($%
N $)$\rightarrow \Omega _{C}$($N$)$\rightarrow 0$ consists of graded modules
generated in degree one and the first two term are Koszul, by [GM] this
implies $\Omega _{C}$($N)$ is Koszul as $B_{n}$-module.

There is a commutative exact diagram:

\begin{center}
$%
\begin{array}{ccccccc}
&  &  & 0 &  & 0 &  \\ 
&  &  & \downarrow &  & \downarrow &  \\ 
& 0 & \rightarrow & B_{n}^{n_{0}}\text{[-1]} & \rightarrow & \text{Z}%
B_{n}^{n_{0}} & \rightarrow 0 \\ 
& \downarrow &  & \downarrow &  & \downarrow &  \\ 
0\rightarrow & \Omega _{B}^{2}\text{(}N\text{)} & \rightarrow & 
B_{n}^{n_{0}+n_{1}}\text{[-1]} & \rightarrow & \Omega _{B}\text{(}N\text{)}
& \rightarrow 0 \\ 
& \downarrow &  & \downarrow &  & \downarrow &  \\ 
0\rightarrow & \Omega _{B}\Omega _{C}\text{(}N\text{)} & \rightarrow & 
B_{n}^{n_{1}}\text{[-1]} & \rightarrow & \Omega _{C}\text{(}N\text{)} & 
\rightarrow 0 \\ 
& \downarrow &  & \downarrow &  & \downarrow &  \\ 
& 0 &  & 0 &  & 0 & 
\end{array}%
$
\end{center}

In particular $\Omega _{B}^{2}$($N$)$\cong \Omega _{B}\Omega _{C}$($N$).

Since $\Omega _{C}$($N$)$\subset C_{n}^{n_{0}}$it is a $C_{n}$-module and we
have the following commutative exact diagram:

\begin{center}
$%
\begin{array}{ccccccc}
& 0 &  & 0 &  &  &  \\ 
& \downarrow &  & \downarrow &  &  &  \\ 
& \text{Z}B_{n}^{n_{1}}\text{[-1]} & \overset{1}{\rightarrow } & \text{Z}%
B_{n}^{n_{1}}\text{[-1]} &  &  &  \\ 
& \downarrow &  & \downarrow &  &  &  \\ 
0\rightarrow & \Omega _{B}\Omega _{C}\text{(}N\text{)} & \rightarrow & 
B_{n}^{n_{1}}\text{[-1]} & \rightarrow & \Omega _{C}\text{(}N\text{)} & 
\rightarrow 0 \\ 
& \downarrow &  & \downarrow &  & \downarrow 1 &  \\ 
0\rightarrow & \Omega _{C}^{2}\text{(}N\text{)} & \rightarrow & C_{n}^{n_{1}}
& \rightarrow & \Omega _{C}\text{(}N\text{)} & \rightarrow 0 \\ 
& \downarrow &  & \downarrow &  &  &  \\ 
& 0 &  & 0 &  &  & 
\end{array}%
$
\end{center}

and an exact sequence: $0\rightarrow B_{n}^{n_{1}}$[-2]$\rightarrow \Omega
_{B}^{2}$($N$)$\rightarrow \Omega _{C}^{2}$($N$)$\rightarrow 0$.

\bigskip In general there exist exact sequences:

$0\rightarrow B_{n}^{n_{k-1}}$[-k]$\rightarrow \Omega _{B}^{k}$($N$)$%
\rightarrow \Omega _{C}^{k}$($N$)$\rightarrow 0$.

which induce exact sequences:\newline
$0\rightarrow $Hom$_{B_{n}}$($\Omega _{C}^{k}$(N),B$_{_{n}0}$)$\rightarrow $%
Hom$_{B_{n}}$($\Omega _{B}^{k}$(N),B$_{_{n}0}$)$\rightarrow $Hom$_{B_{n}}$(B$%
_{n}^{n_{k-1}}$[-k],B$_{_{n}0}$)$\rightarrow 0.$

The module $\Omega _{C}^{k}$($N$) is annihilated by $Z$ which implies $%
J_{B}\Omega _{C}^{k}$($N$)=$J_{C}\Omega _{C}^{k}$($N$). The module $%
B_{_{n}0}\cong C_{_{n}0}\cong \Bbbk $

Hom$_{B_{n}}$($\Omega _{C}^{k}$($N$),$B_{_{n}0}$)$\cong $Hom$_{B_{n}0}$($%
\Omega _{C}^{k}$($N$)/$J_{B}\Omega _{C}^{k}$($N$),$B_{_{n}0}$)$\cong $

Hom$_{C_{n}0}$($\Omega _{C}^{k}$($N$)/$J_{c}\Omega _{C}^{k}$($N$),$C_{_{n}0}$%
)$\cong $Hom$_{C_{n}}$($\Omega _{C}^{k}$($N$),$C_{_{n}0}$)$\cong $Ext$%
_{C_{n}}^{k}$($N$,$C_{_{n}0}$).

We then have an exact sequence: *) $0\rightarrow F_{C_{n}}$($N$)$\overset{%
\alpha }{\rightarrow }$ $F_{B_{n}}$($N$)$\rightarrow \overset{m}{\underset{%
k=1}{\oplus }}S^{n_{k-1}}$[k]$\rightarrow 0$

\begin{lemma}
The map $\alpha $ is a morphism of $C_{n}^{!}$-modules.
\end{lemma}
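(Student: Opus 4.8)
The plan is to recognize $\alpha$ as the \emph{inflation} map on $\mathrm{Ext}$ induced by the ring epimorphism $B_n\twoheadrightarrow C_n=B_n/ZB_n$, and then to observe that inflation is automatically compatible with Yoneda composition, hence with the action of the subalgebra $C_n^{!}\subseteq B_n^{!}$.

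First I would unwind how $\alpha$ was constructed. In homological degree $k$ it is the composite
\[
\mathrm{Ext}_{C_n}^{k}(N,C_{n0})\;\cong\;\mathrm{Hom}_{B_n}\!\big(\Omega_C^{k}(N),B_{n0}\big)\;\longrightarrow\;\mathrm{Hom}_{B_n}\!\big(\Omega_B^{k}(N),B_{n0}\big)\;\cong\;\mathrm{Ext}_{B_n}^{k}(N,B_{n0}),
\]
where the middle arrow is restriction along the epimorphism $p_k\colon\Omega_B^{k}(N)\twoheadrightarrow\Omega_C^{k}(N)$ from the sequence $0\to B_n^{n_{k-1}}[-k]\to\Omega_B^{k}(N)\to\Omega_C^{k}(N)\to 0$, and the outer isomorphisms use $Z\Omega_C^{k}(N)=0$ together with minimality of the resolutions. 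By the very way those sequences were produced from the comparison diagrams of the preceding pages, $p_k$ is the restriction to $k$-th syzygies of a comparison chain map $\sigma_\bullet\colon P_\bullet\to Q_\bullet$ over $\mathrm{id}_N$, where $P_\bullet\to N$ and $Q_\bullet\to N$ are the minimal graded projective resolutions over $B_n$ and over $C_n$ respectively (the second regarded over $B_n$ through $B_n\twoheadrightarrow C_n$). Hence $\alpha$ sends the class of a $k$-fold exact sequence $0\to B_{n0}\to E_k\to\cdots\to E_1\to N\to 0$ of $C_n$-modules to its class as a sequence of $B_n$-modules; that is, $\alpha$ is the inflation homomorphism $\mathrm{Ext}_{C_n}^{\ast}(N,C_{n0})\to\mathrm{Ext}_{B_n}^{\ast}(N,B_{n0})$.

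Next I would recall that the inclusion $C_n^{!}\hookrightarrow B_n^{!}$ of [MMo1] is itself the inflation map $\mathrm{Ext}_{C_n}^{\ast}(B_{n0},B_{n0})\to\mathrm{Ext}_{B_n}^{\ast}(B_{n0},B_{n0})$: it is a graded ring homomorphism which in degree one identifies $(C_n^{!})_1$ with the span of the $X_i^{\ast}$ inside $(B_n^{!})_1$, so its image is the subalgebra generated in degree one, namely $C_n^{!}$, and it is injective by the dimension count $\dim_{\Bbbk}(C_n^{!})_j=\binom{n}{j}$. By definition the $C_n^{!}$-module structure on $F_{B_n}(N)$ is the restriction, along this inclusion, of the $B_n^{!}$-module structure given by Yoneda composition, and $C_n^{!}=\mathrm{Ext}_{C_n}^{\ast}(B_{n0},B_{n0})$ acts on $F_{C_n}(N)$ by Yoneda composition as well. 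Therefore $C_n^{!}$-linearity of $\alpha$ follows formally: for $\xi\in\mathrm{Ext}_{C_n}^{j}(B_{n0},B_{n0})$ and $\eta\in\mathrm{Ext}_{C_n}^{k}(N,B_{n0})$ the product $\xi\cdot\eta$ is the splice of the exact sequences representing $\xi$ and $\eta$, and splicing of sequences of $C_n$-modules coincides with splicing of the underlying sequences of $B_n$-modules, so $\alpha(\xi\cdot\eta)$ is the splice of $\alpha(\xi)$ and $\alpha(\eta)$, i.e. $\alpha(\xi\cdot\eta)=\xi\cdot\alpha(\eta)$ with $\xi$ on the right read inside $C_n^{!}\subseteq B_n^{!}$. (Equivalently, one can argue with resolutions: adding minimal resolutions $\bar P_\bullet\to B_{n0}$, $\bar Q_\bullet\to B_{n0}$ over $B_n$ and $C_n$ and a comparison $\bar P_\bullet\to\bar Q_\bullet$, the chain map $P_{\bullet+k}\to\bar Q_\bullet$ coming from $\xi\cdot\eta$ and the one obtained by lifting $\alpha(\eta)$ and inflating $\xi$ are both lifts of $\alpha(\eta)$, hence chain homotopic — note $\bar Q_\bullet$ is exact even though not $B_n$-projective — so they agree after composing with a cocycle representing $\xi$.)

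The only genuine work is in the last two points: verifying that the map $\alpha$ extracted from the diagrams is actually the inflation map (equivalently, that $p_k$ is the syzygy part of a comparison chain map), and that $C_n^{!}\subseteq B_n^{!}$ is the image of inflation on $\mathrm{Ext}^{\ast}(B_{n0},B_{n0})$. Both are essentially recorded in [MMo1], but they are exactly what converts the purely formal compatibility of inflation with Yoneda products into the asserted $C_n^{!}$-linearity of $\alpha$.
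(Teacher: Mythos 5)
Your argument is correct, but it takes a genuinely different route from the paper. You identify $\alpha$ with the change-of-rings (inflation) map $\mathrm{Ext}_{C_{n}}^{\ast }(N,\Bbbk )\rightarrow \mathrm{Ext}_{B_{n}}^{\ast }(N,\Bbbk )$ along $B_{n}\twoheadrightarrow B_{n}/ZB_{n}\cong C_{n}$, note that inflation is multiplicative for Yoneda splicing (the underlying exact sequences do not change under restriction of scalars), and then reduce $C_{n}^{!}$-linearity to the identification of the embedding $C_{n}^{!}\subseteq B_{n}^{!}$ with inflation on $\mathrm{Ext}^{\ast }(\Bbbk ,\Bbbk )$; that identification is legitimate because both are graded algebra maps agreeing in degree one, where inflation is dual to $J_{B}/J_{B}^{2}\twoheadrightarrow J_{C}/J_{C}^{2}$, and $C_{n}^{!}$ is generated in degree one. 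The paper instead stays entirely inside its explicit constructions: it takes a degree-one element $x\in \mathrm{Ext}_{C_{n}}^{1}(\Bbbk ,\Bbbk )$ (enough, since $C_{n}^{!}$ is generated in degree one), represents $xy$ and $x\alpha (y)$ by maps out of syzygies via pullback and gluing diagrams built from the sequences $0\rightarrow B_{n}^{n_{k-1}}[-k]\rightarrow \Omega _{B}^{k}(N)\rightarrow \Omega _{C}^{k}(N)\rightarrow 0$, and checks directly that the two representing cocycles coincide. Your approach is more conceptual, handles all degrees of $C_{n}^{!}$ at once, and makes the naturality behind the lemma visible; its cost is exactly the two points you flag, which do hold here: the maps $\Omega _{B}^{k}(N)\rightarrow \Omega _{C}^{k}(N)$ are, by the inductive construction in the paper, the syzygy components of a chain map from the minimal $B_{n}$-resolution to the minimal $C_{n}$-resolution lifting $\mathrm{id}_{N}$ (so the syzygy description of $\alpha $ really is inflation), and the subalgebra $C_{n}^{!}\subseteq B_{n}^{!}$ of [MMo1] is generated by the degree-one duals of the $X_{i}$, hence coincides with the image of inflation. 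The paper's hands-on check avoids appealing to this general machinery, at the price of a longer diagram chase that only treats degree-one multipliers explicitly.
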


\begin{proof}
Let $x$ be an element of Ext$_{C_{n}}^{k}$($\Bbbk $,$\Bbbk $) and y$\in $Ext$%
_{C_{n}}^{k}$($N$,$\Bbbk $) we want to prove $\alpha $(xy)=x$\alpha $(y).

The element x is an extension: $0\rightarrow \Bbbk \rightarrow E\rightarrow
\Bbbk \rightarrow 0$ and $y:$ $0\rightarrow \Bbbk \rightarrow V\rightarrow
\Omega _{C}^{k-1}$($N$)$\rightarrow 0$, the induced map $f$ given below
corresponds to y:

$%
\begin{array}{ccccccc}
0\rightarrow & \Omega _{C}^{k}\text{(}N\text{)} & \rightarrow & 
C_{n}^{n_{k-1}} & \rightarrow & \Omega _{C}^{k-1}\text{(}N\text{)} & 
\rightarrow 0 \\ 
& \downarrow \text{f} &  & \downarrow &  & \downarrow \text{1} &  \\ 
0\rightarrow & \Bbbk & \rightarrow & V & \rightarrow & \Omega _{C}^{k-1}%
\text{(}N\text{)} & \rightarrow 0%
\end{array}%
$

Consider the following pull back:

$%
\begin{array}{ccccccc}
0\rightarrow & \Bbbk & \rightarrow & L & \rightarrow & \Omega _{C}^{k}\text{(%
}N\text{)} & \rightarrow 0 \\ 
& \downarrow \text{1} &  & \downarrow &  & \downarrow \text{f} &  \\ 
0\rightarrow & \Bbbk & \rightarrow & E & \rightarrow & \Bbbk & \rightarrow 0%
\end{array}%
$

The exact sequence: $0\rightarrow B_{n}^{n_{k-1}}$[-k]$\rightarrow \Omega
_{B}^{k}$($N$)$\overset{\pi _{k}}{\rightarrow }\Omega _{C}^{k}$($N$)$%
\rightarrow 0$ induces a pull back of $B_{n}$-modules:

$%
\begin{array}{ccccccc}
0\rightarrow & \Bbbk & \rightarrow & W & \rightarrow & \Omega _{B}^{k}\text{(%
}N\text{)} & \rightarrow 0 \\ 
& \downarrow \text{1} &  & \downarrow &  & \downarrow \pi _{k} &  \\ 
0\rightarrow & \Bbbk & \rightarrow & L & \rightarrow & \Omega _{C}^{k}\text{(%
}N\text{)} & \rightarrow 0%
\end{array}%
$

It was proved above the existence of commutative exact diagrams:

$%
\begin{array}{ccccccc}
&  &  & 0 &  & 0 &  \\ 
&  &  & \downarrow &  & \downarrow &  \\ 
& 0 & \rightarrow & B_{n}^{n_{k-2}} & \rightarrow & B_{n}^{n_{k-2}} & 
\rightarrow 0 \\ 
& \downarrow &  & \downarrow &  & \downarrow &  \\ 
0\rightarrow & \Omega _{B}^{k}\text{(}N\text{)} & \rightarrow & 
B_{n}^{n_{k-2}+n_{k-1}} & \rightarrow & \Omega _{B}^{k-1}\text{(}N\text{)} & 
\rightarrow 0 \\ 
& \downarrow &  & \downarrow &  & \downarrow &  \\ 
0\rightarrow & \Omega _{B}^{k}\text{(}N\text{)} & \rightarrow & 
B_{n}^{n_{k-1}} & \rightarrow & \Omega _{C}^{k-1}\text{(}N\text{)} & 
\rightarrow 0 \\ 
& \downarrow &  & \downarrow &  & \downarrow &  \\ 
& 0 &  & 0 &  & 0 & 
\end{array}%
$

and

$%
\begin{array}{ccccccc}
0\rightarrow & \Omega _{B}^{k}\text{(}N\text{)} & \rightarrow & 
B_{n}^{n_{k-1}} & \rightarrow & \Omega _{C}^{k-1}\text{(}N\text{)} & 
\rightarrow 0 \\ 
& \downarrow &  & \downarrow &  & \downarrow \text{1} &  \\ 
0\rightarrow & \Omega _{C}^{k}\text{(}N\text{)} & \rightarrow & 
C_{n}^{n_{k-1}} & \rightarrow & \Omega _{C}^{k-1}\text{(}N\text{)} & 
\rightarrow 0 \\ 
& \downarrow &  & \downarrow &  &  &  \\ 
& 0 &  & 0 &  &  & 
\end{array}%
$

Gluing diagrams we obtain a commutative diagram with exact rows:\newline
$%
\begin{array}{cccccccccc}
& \text{0}\rightarrow  & \Omega _{B}^{k+1}\text{(N)} & \rightarrow  & \text{B%
}_{n}^{n_{k}+n_{k-1}} & \rightarrow  & \text{B}_{n}^{n_{k-2}+n_{k-1}} & 
\rightarrow  & \Omega _{B}^{k-1}\text{(N)} & \rightarrow \text{0} \\ 
&  & \varphi \downarrow  &  & \downarrow  & \searrow  & \downarrow  &  & 
\downarrow  &  \\ 
\alpha \text{(xy):} & \text{0}\rightarrow  & \Bbbk  & \rightarrow  & \text{W}
& \rightarrow  & \text{B}_{n}^{n_{k-1}} & \rightarrow  & \Omega _{C}^{k-1}%
\text{(N)} & \rightarrow \text{0} \\ 
&  & \downarrow \text{1} &  & \downarrow  &  & \downarrow  &  & \downarrow 
\text{1} &  \\ 
\text{xy:} & \text{0}\rightarrow  & \Bbbk  & \rightarrow  & \text{L} & 
\rightarrow  & \text{C}_{n}^{n_{k-1}} & \rightarrow  & \Omega _{C}^{k-1}%
\text{(N)} & \rightarrow \text{0} \\ 
&  & \downarrow \text{1} &  & \downarrow  &  & \downarrow  &  & \downarrow 
\text{1} &  \\ 
\text{xy:} & \text{0}\rightarrow  & \Bbbk  & \rightarrow  & \text{E} & 
\rightarrow  & \text{V} & \rightarrow  & \Omega _{C}^{k-1}\text{(N)} & 
\rightarrow \text{0}%
\end{array}%
$

$\alpha (xy)=\phi \in Ext_{B_{n}}^{k+1}(N,K)$ .

In the other hand we have the following commutative diagram with exact rows:

$%
\begin{array}{ccccccc}
\text{0}\rightarrow & \Omega _{B}^{k+1}\text{(N)} & \rightarrow & \text{B}%
_{n}^{n_{k}+n_{k-1}} & \rightarrow & \Omega _{B}^{k}\text{(N)} & \rightarrow 
\text{0} \\ 
& \downarrow \text{1} &  & \downarrow &  & \downarrow \pi _{k} &  \\ 
\text{0}\rightarrow & \Omega _{B}^{k+1}\text{(N)} & \rightarrow & \text{B}%
_{n}^{n_{k}} & \rightarrow & \Omega _{C}^{k}\text{(N)} & \rightarrow \text{0}
\\ 
& \downarrow \pi _{k+1} &  & \downarrow &  & \downarrow \text{1} &  \\ 
\text{0}\rightarrow & \Omega _{C}^{k+1}\text{(N)} & \rightarrow & \text{C}%
_{n}^{n_{k}} & \rightarrow & \Omega _{C}^{k}\text{(N)} & \rightarrow \text{0}
\\ 
& \downarrow \varphi ^{\prime } &  & \downarrow &  & \downarrow \text{1} & 
\\ 
\text{0}\rightarrow & \Bbbk & \rightarrow & \text{L} & \rightarrow & \Omega
_{C}^{k}\text{(N)} & \rightarrow \text{0} \\ 
& \downarrow \text{1} &  & \downarrow &  & \downarrow \text{f} &  \\ 
\text{0}\rightarrow & \Bbbk & \rightarrow & \text{E} & \rightarrow & \Bbbk & 
\rightarrow \text{0}%
\end{array}%
$

Gluing diagrams we obtain the following commutative exact diagrams:

$%
\begin{array}{ccccccc}
\text{0}\rightarrow & \Omega _{B}^{k+1}\text{(N)} & \rightarrow & \text{B}%
_{n}^{n_{k}+n_{k-1}} & \rightarrow & \Omega _{B}^{k}\text{(N)} & \rightarrow 
\text{0} \\ 
& \downarrow \varphi ^{\prime }\pi _{k+1} &  & \downarrow &  & \downarrow 
\text{f}\pi _{k} &  \\ 
\text{0}\rightarrow & \Bbbk & \rightarrow & \text{E} & \rightarrow & \Bbbk & 
\rightarrow \text{0}%
\end{array}%
$

and

$%
\begin{array}{ccccccc}
\text{0}\rightarrow & \Omega _{B}^{k+1}\text{(N)} & \rightarrow & \text{B}%
_{n}^{n_{k}+n_{k-1}} & \rightarrow & \Omega _{B}^{k}\text{(N)} & \rightarrow 
\text{0} \\ 
& \downarrow \varphi &  & \downarrow &  & \downarrow \text{f}\pi _{k} &  \\ 
\text{0}\rightarrow & \Bbbk & \rightarrow & \text{E} & \rightarrow & \Bbbk & 
\rightarrow \text{0}%
\end{array}%
$

The map $\varphi ^{\prime }$ corresponds with xf$\in $Ext$_{C_{n}}^{k+1}$($N$%
,$\Bbbk )$, $\alpha $(y)=$\pi _{k}$f, x$\alpha $(y)=$\varphi $.

Then we have: $\alpha $(xy)=$\alpha $(xf)=$\varphi ^{\prime }\pi _{k+1}$=$%
\Omega $(f$\pi _{k}$)=$\varphi $=x$\alpha $(y).
\end{proof}

\begin{lemma}
There is an isomorphism: $B_{n}^{!}F_{C}$($M$)=$F_{B}$($M$).
\end{lemma}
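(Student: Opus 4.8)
The plan is to write down the obvious multiplication map and show it is an isomorphism by comparing Hilbert series and then checking surjectivity. Since, by the previous lemma, $\alpha\colon F_{C}(M)\to F_{B}(M)$ is a morphism of $C_{n}^{!}$-modules, the rule $b\otimes y\mapsto b\cdot\alpha(y)$ is well defined and yields a morphism of graded left $B_{n}^{!}$-modules $\mu\colon B_{n}^{!}\otimes_{C_{n}^{!}}F_{C}(M)\to F_{B}(M)$ (here $M$ is, as in the reduction made just above, a Koszul $B_{n}$-module with $ZM=0$, i.e.\ a Koszul $C_{n}$-module). What has to be shown is that $\mu$ is an isomorphism; equivalently, that the image of $\alpha$ generates $F_{B}(M)$ as a $B_{n}^{!}$-module.

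First I would compute the Hilbert series of the two sides. By the results of Section~1, $B_{n}^{!}=C_{n}^{!}\oplus ZC_{n}^{!}$ as a right $C_{n}^{!}$-module, and left multiplication by $Z$ gives an isomorphism $C_{n}^{!}\xrightarrow{\ \sim\ }ZC_{n}^{!}$ of right $C_{n}^{!}$-modules, up to a shift by $1$; hence $B_{n}^{!}\otimes_{C_{n}^{!}}F_{C}(M)\cong F_{C}(M)\oplus F_{C}(M)[-1]$ as graded vector spaces, so its component in cohomological degree $j$ has dimension $\dim_{\Bbbk}\operatorname{Ext}^{j}_{C_{n}}(M,\Bbbk)+\dim_{\Bbbk}\operatorname{Ext}^{j-1}_{C_{n}}(M,\Bbbk)$. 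On the other hand the exact sequence $*)$, $0\to F_{C_{n}}(M)\xrightarrow{\alpha}F_{B_{n}}(M)\to\bigoplus_{k\ge1}S^{n_{k-1}}[k]\to0$, gives $\dim_{\Bbbk}\operatorname{Ext}^{j}_{B_{n}}(M,\Bbbk)=\dim_{\Bbbk}\operatorname{Ext}^{j}_{C_{n}}(M,\Bbbk)+n_{j-1}$, and since $C_{n}$ is Koszul the space $\operatorname{Ext}^{j-1}_{C_{n}}(M,\Bbbk)$ is concentrated in internal degree $j-1$ and has dimension the $(j-1)$-st Betti number $n_{j-1}$ of $M$ over $C_{n}$. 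Thus the two sides of $\mu$ have the same Hilbert series, and it suffices to prove $\mu$ surjective.

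For surjectivity, since $B_{n}^{!}=C_{n}^{!}+ZC_{n}^{!}$ and $\operatorname{im}\alpha$ is a $C_{n}^{!}$-submodule, $\operatorname{im}\mu=B_{n}^{!}\cdot\operatorname{im}\alpha=\operatorname{im}\alpha+Z\cdot\operatorname{im}\alpha$, so it is enough to show that $Z\cdot\operatorname{im}\alpha$ maps onto $\operatorname{coker}\alpha=\bigoplus_{k\ge1}S^{n_{k-1}}[k]$; in view of the dimension count this amounts to showing that in each degree $j$ the composite $\operatorname{Ext}^{j-1}_{C_{n}}(M,\Bbbk)\xrightarrow{Z\cdot\alpha}\operatorname{Ext}^{j}_{B_{n}}(M,\Bbbk)\twoheadrightarrow S^{n_{j-1}}[j]$ is an isomorphism. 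I would do this with the explicit resolutions assembled before the lemma: identifying $\operatorname{Ext}^{j}_{B_{n}}(M,\Bbbk)$ with $\operatorname{Hom}_{B_{n}}(\Omega_{B}^{j}(M),\Bbbk)$, the free summand $B_{n}^{n_{j-1}}[-j]$ of $\Omega_{B}^{j}(M)$ appearing in the sequence $0\to B_{n}^{n_{j-1}}[-j]\to\Omega_{B}^{j}(M)\to\Omega_{C}^{j}(M)\to0$ arises as $Z$ times the $B_{n}$-projective cover of $\Omega_{C}^{j-1}(M)$ (which sits inside $\Omega_{B}^{j}(M)$ because that cover factors through the $C_{n}$-cover and $\Omega_{C}^{j-1}(M)$ is a $C_{n}$-module); computing the Yoneda product of the degree-one class $Z\in(B_{n}^{!})_{1}$ with $\alpha(y)$ on this summand then returns the defining functional of $y$. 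Hence $\beta\circ(Z\cdot\alpha)$ becomes, after these identifications, the identity of $\Bbbk^{n_{j-1}}$, in particular surjective, so $\mu$ is onto and, with the Hilbert series, an isomorphism.

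The main obstacle is precisely this last bookkeeping: one must follow, through the chain of commutative diagrams relating the minimal $B_{n}$- and $C_{n}$-projective resolutions of $M$ (those exhibiting $\Omega_{B}^{k+1}(M)\cong\Omega_{B}\Omega_{C}^{k}(M)$ and the sequences $0\to B_{n}^{n_{k-1}}[-k]\to\Omega_{B}^{k}(M)\to\Omega_{C}^{k}(M)\to0$), exactly which copy of which free summand is hit by multiplication by $Z$, and verify that the Yoneda product is compatible with these identifications. A cleaner, though less self-contained, alternative is to invoke the Cartan--Eilenberg change-of-rings spectral sequence for the central non-zero-divisor $Z$ and the quotient $C_{n}=B_{n}/(Z)$: it has only the two rows $E_{2}^{p,0}=\operatorname{Ext}^{p}_{C_{n}}(M,\Bbbk)$ and $E_{2}^{p,1}\cong\operatorname{Ext}^{p}_{C_{n}}(M,\Bbbk)$ up to an internal shift, its differential $d_{2}$ vanishes for internal-degree reasons because $C_{n}$ is Koszul, so the sequence degenerates and identifies $F_{B}(M)$ with $B_{n}^{!}\otimes_{C_{n}^{!}}F_{C}(M)$ as $B_{n}^{!}$-modules, the edge homomorphism being multiplication by $Z$.
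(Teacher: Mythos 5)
Your reduction is sound as far as it goes: by the preceding lemma $\alpha$ is $C_{n}^{!}$-linear, so $\mu(b\otimes y)=b\,\alpha(y)$ is well defined; the count $\dim\bigl(B_{n}^{!}\otimes_{C_{n}^{!}}F_{C}(M)\bigr)_{j}=n_{j}+n_{j-1}=\dim\operatorname{Ext}^{j}_{B_{n}}(M,\Bbbk)$ is correct; and it is true that everything comes down to the composite $\operatorname{Ext}^{j-1}_{C_{n}}(M,\Bbbk)\xrightarrow{\;Z\cdot\alpha\;}\operatorname{Ext}^{j}_{B_{n}}(M,\Bbbk)\rightarrow\operatorname{coker}\alpha\cong\Bbbk^{n_{j-1}}$ being surjective. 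But that last claim \emph{is} the content of the lemma, and you do not prove it: you assert that the Yoneda product of $Z$ with $\alpha(y)$, evaluated on the free summand $B_{n}^{n_{j-1}}[-j]\subset\Omega_{B}^{j}(M)$, ``returns the defining functional of $y$,'' and then explicitly set the verification aside as ``the main obstacle.'' The paper's proof consists precisely of that verification: starting from a class $\varphi:\Omega_{B}^{k}(M)\rightarrow\Bbbk$ it factors $\varphi$ through the embedding $\Omega_{B}^{k}(M)\hookrightarrow JB_{n}^{n_{k-1}}[-k+1]$ and, via a chain of pullback diagrams, exhibits every class of $\operatorname{Ext}^{k}_{B_{n}}(M,\Bbbk)$ as a Yoneda product of degree-one classes of $B_{n}^{!}$ with classes coming from $\operatorname{Ext}^{k-1}_{C_{n}}(M,\Bbbk)$; that is what $B_{n}^{!}F_{C}(M)=F_{B}(M)$ means here. (The tensor-product isomorphism, obtained from this surjectivity together with exactly your dimension count and the splitting $B_{n}^{!}=C_{n}^{!}\oplus ZC_{n}^{!}$, is the \emph{next} proposition in the paper; so the parts of your argument that are complete duplicate that proposition, while the step that would prove the present lemma is missing.)

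The spectral-sequence alternative has the same gap in different clothes. For $C_{n}=B_{n}/(Z)$ the change-of-rings spectral sequence does have only two rows and does degenerate for the internal-degree reason you give (using that $M$ is Koszul over $C_{n}$), but degeneration only produces a two-step filtration of $\operatorname{Ext}^{\ast}_{B_{n}}(M,\Bbbk)$ with subquotients $F_{C}(M)$ and a shift of $F_{C}(M)$. To conclude that $F_{B}(M)$ is the \emph{induced} $B_{n}^{!}$-module --- equivalently, that multiplication by $Z\in(B_{n}^{!})_{1}$ carries the bottom layer isomorphically onto the top quotient --- you need the multiplicative structure of the spectral sequence over the one for $M=\Bbbk$ and an identification of the relevant edge maps with $\alpha$ and with multiplication by $Z$; this is again exactly the bookkeeping you deferred, asserted rather than carried out.
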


\begin{proof}
Let x be an element of Ext$_{B_{n}}^{k}$($M$,$\Bbbk $) and $\varphi $ the
corresponding morphism: $\varphi $:$\Omega _{B}^{k}$($M$)$\rightarrow \Bbbk $%
. As above there exists the following commutative exact diagram:\newline
$%
\begin{array}{ccccccc}
&  &  & 0 &  & 0 &  \\ 
&  &  & \downarrow  &  & \downarrow  &  \\ 
& 0 & \rightarrow  & B_{n}^{n_{k-2}}\text{[-k+1]} & \rightarrow  & 
B_{n}^{n_{k-2}}\text{[-k+1]} & \rightarrow 0 \\ 
& \downarrow  &  & \downarrow  &  & \downarrow  &  \\ 
0\rightarrow  & \Omega _{B}^{k}\text{(}M\text{)} & \rightarrow  & 
B_{n}^{n_{k-2}+n_{k-1}}\text{[-k+1]} & \rightarrow  & \Omega _{B}^{k-1}\text{%
(}M\text{)} & \rightarrow 0 \\ 
& \downarrow  &  & \downarrow  &  & \downarrow  &  \\ 
0\rightarrow  & \Omega _{B}\Omega _{C}^{k-1}\text{(}M\text{)} & \rightarrow 
& B_{n}^{n_{k-1}}\text{[-k+1]} & \rightarrow  & \Omega _{C}^{k-1}\text{(}M%
\text{)} & \rightarrow 0 \\ 
& \downarrow  &  & \downarrow  &  & \downarrow  &  \\ 
& 0 &  & 0 &  & 0 & 
\end{array}%
$

Since the module $\Omega _{B}^{k}$($M$) is generated in degree k, there
exists an exact sequence of graded modules generated in degree k:

$0\rightarrow \Omega _{B}^{k}$($M$)$\rightarrow JB_{n}^{n_{k-1}}$[-k+1]$%
\rightarrow J\Omega _{C}^{k-1}$($M$)$\rightarrow 0$, which in turn induces
an exact sequence: $0\rightarrow J\Omega _{B}^{k}$($M$)$\rightarrow
J^{2}B_{n}^{n_{k-1}}$[-k+1]$\rightarrow J^{2}\Omega _{C}^{k-1}$($M$)$%
\rightarrow 0$ and there exists a commutative exact diagram:\newline
$%
\begin{array}{ccccccc}
& 0 &  & 0 &  & 0 &  \\ 
& \downarrow  &  & \downarrow  &  & \downarrow  &  \\ 
\text{0}\rightarrow  & \text{J}\Omega _{B}^{k}\text{(M)} & \rightarrow  & 
\text{J}^{2}\text{B}_{n}^{n_{k-1}} & \rightarrow  & \text{J}^{2}\Omega
_{C}^{k-1}\text{(M)} & \rightarrow \text{0} \\ 
& \downarrow  &  & \downarrow  &  & \downarrow  &  \\ 
\text{0}\rightarrow  & \Omega _{B}^{k}\text{(M)} & \overset{j}{\rightarrow }
& \text{JB}_{n}^{n_{k-1}} & \rightarrow  & \text{J}\Omega _{C}^{k-1}\text{(M)%
} & \rightarrow \text{0} \\ 
& \downarrow \pi  &  & \downarrow \overline{\pi } &  & \downarrow  &  \\ 
\text{0}\rightarrow  & \Omega _{B}^{k}\text{(M)/J}\Omega _{B}^{k}\text{(M)}
& \underset{\overline{q}}{\overset{\overline{j}}{\rightleftarrows }} & \text{%
JB}_{n}^{n_{k-1}}\text{/J}^{2}\text{B}_{n}^{n_{k-1}} & \rightarrow  & \text{J%
}\Omega _{C}^{k-1}\text{(M)/J}^{2}\Omega _{C}^{k-1}\text{(M)} & \rightarrow 
\text{0} \\ 
& \downarrow  &  & \downarrow  &  & \downarrow  &  \\ 
& 0 &  & 0 &  & 0 & 
\end{array}%
$

Since $\overline{\text{q}}\overline{\text{j}}$=1, it follows $\overline{%
\text{q}}\overline{\pi }$j= $\overline{\text{q}}\overline{\text{j}}\pi $=$%
\pi $.

Being $\Bbbk $ semisimple, the map $\varphi $ factors as follows:

$%
\begin{array}{ccccc}
\Omega _{B}^{k}\text{(}M\text{)} &  & \overset{\varphi }{\rightarrow } &  & 
\Bbbk \\ 
& \pi \searrow &  & \nearrow \text{t} &  \\ 
&  & \Omega _{B}^{k}\text{(}M\text{)/}J\Omega _{B}^{k}\text{(}M\text{)} &  & 
\end{array}%
.$

Set f=t$\overline{\text{q}}\overline{\pi }$, f: $JB_{n}^{n_{k-1}}$[-k+1]$%
\rightarrow \Bbbk $. Then fj=t$\overline{\text{q}}\overline{\pi }$j=t$\pi $=$%
\varphi .$

Consider the commutative diagram with exact rows:\newline
$%
\begin{array}{cccccccc}
& 0\rightarrow  & \Omega _{B}\Omega _{C}^{k-1}\text{(}M\text{)} & 
\rightarrow  & B_{n}^{n_{k-1}} & \rightarrow  & \Omega _{C}^{k-1}\text{(}M%
\text{)} & \rightarrow 0 \\ 
&  & \downarrow \text{j} &  & \downarrow \text{1} &  & \downarrow \rho  & 
\\ 
& 0\rightarrow  & JB^{n_{k-1}}\text{[-k+1]} & \rightarrow  & B_{n}^{n_{k-1}}
& \rightarrow  & \Omega _{C}^{k-1}\text{(}M\text{)/}J\Omega _{C}^{k-1}\text{(%
}M\text{)} & \rightarrow 0 \\ 
&  & \downarrow \text{f} &  & \downarrow  &  & \downarrow \cong  &  \\ 
x: & 0\rightarrow  & \Bbbk  & \rightarrow  & E & \rightarrow  & \underset{%
n_{k-1}}{\oplus \Bbbk } & \rightarrow 0%
\end{array}%
.$

The map corresponding to the last column is: p=$\left[ 
\begin{array}{c}
\text{p}_{1} \\ 
\overset{.}{\underset{.}{.}} \\ 
\text{p}_{n_{k-1}}%
\end{array}%
\right] $: $\Omega _{C}^{k-1}$($M$)$\rightarrow \underset{n_{k-1}}{\oplus
\Bbbk }$. Each p$_{i}$: $\Omega _{C}^{k-1}$($M$)$\rightarrow \Bbbk $
corresponds to an element of Ext$_{C_{n}}^{k-1}$($M$,$\Bbbk $).

x=(x$_{1}$,x$_{2}$...x$_{n_{k-1}}$) and each x$_{i}$ is an extension: $%
0\rightarrow \Bbbk \rightarrow E_{i}\rightarrow \Bbbk \rightarrow 0.$ Taking
pull backs:

$%
\begin{array}{cccccccc}
\text{x}_{i}\text{p}_{i}\text{:} & 0\rightarrow & \Bbbk & \rightarrow & L_{i}
& \rightarrow & \Omega _{C}^{k-1}\text{(}M\text{)} & \rightarrow 0 \\ 
&  & \downarrow \text{1} &  & \downarrow &  & \downarrow \text{p}_{i} &  \\ 
x_{i}: & 0\rightarrow & \Bbbk & \rightarrow & E_{i} & \rightarrow & \Bbbk & 
\rightarrow 0%
\end{array}%
$

where each x$_{i}$p$_{i}\in B_{n}^{!}$Ext$_{C_{n}}^{k-1}$($M$,$\Bbbk $) and
xp=$\sum $x$_{i}$p$_{i}\in B_{n}^{!}$Ext$_{C_{n}}^{k-1}$($M,\Bbbk $).

There is also the following induced diagram with exact rows:\newline
$%
\begin{array}{cccccccc}
& 0\rightarrow  & \Omega _{B}\Omega _{C}^{k-1}\text{(}M\text{)} & 
\rightarrow  & B_{n}^{n_{k-1}}\text{[k+1]} & \rightarrow  & \Omega _{C}^{k-1}%
\text{(}M\text{)} & \rightarrow 0 \\ 
&  & \downarrow \text{h} &  & \downarrow  &  & \downarrow 1 &  \\ 
& 0\rightarrow  & \Bbbk  & \rightarrow  & L & \rightarrow  & \Omega
_{C}^{k-1}\text{(}M\text{)} & \rightarrow 0 \\ 
&  & \downarrow \text{1} &  & \downarrow  &  & \downarrow \text{p} &  \\ 
x: & 0\rightarrow  & \Bbbk  & \rightarrow  & E & \rightarrow  & \underset{%
n_{k-1}}{\oplus \Bbbk } & \rightarrow 0%
\end{array}%
$

Gluing the diagrams we obtain the following commutative exact diagram:%
\newline
$%
\begin{array}{cccccccc}
& 0\rightarrow  & \Omega _{B}\Omega _{C}^{k-1}\text{(}M\text{)} & 
\rightarrow  & B_{n}^{n_{k-1}}\text{[k+1]} & \rightarrow  & \Omega _{C}^{k-1}%
\text{(}M\text{)} & \rightarrow 0 \\ 
&  & \downarrow \text{h} &  & \downarrow  &  & \downarrow \text{p} &  \\ 
x: & 0\rightarrow  & \Bbbk  & \rightarrow  & E & \rightarrow  & \underset{%
n_{k-1}}{\oplus \Bbbk } & \rightarrow 0%
\end{array}%
$

It follows h=fj=$\varphi $ up to homotopy.

We have proved $B_{n}^{!}$Ext$_{C_{n}}^{k-1}$($M$,$\Bbbk $)=Ext$_{B_{n}}^{k}$%
($M$,$\Bbbk $).

It follows by induction $B_{n}^{!}F_{C}$($M$)=$F_{B}$($M$).
\end{proof}

We can prove now the following:

\begin{proposition}
Let $M$ be a Koszul non semisimple $B_{n}$-module with Z$M$=0, $F_{B}$:$%
K_{B}\rightarrow K_{B^{!}}$, $F_{C}$:$K_{C}\rightarrow K_{C^{!}}$ Koszul
dualities. Then there is an isomorphism of $B_{n}^{!}$-modules: $B_{n}^{!}%
\underset{C^{!}}{\otimes }F_{C}$($M$)$\cong F_{B}$($M$).
\end{proposition}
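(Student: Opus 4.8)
The plan is to identify the asserted isomorphism with the natural multiplication map. By the lemma stating that $\alpha$ is a morphism of $C_{n}^{!}$-modules, the monomorphism $\alpha\colon F_{C}(M)\hookrightarrow F_{B}(M)$ is $C_{n}^{!}$-linear, so it induces a homomorphism of graded left $B_{n}^{!}$-modules
\[
\mu\colon B_{n}^{!}\otimes_{C_{n}^{!}}F_{C}(M)\longrightarrow F_{B}(M),\qquad b\otimes x\longmapsto b\,\alpha(x),
\]
which is well defined precisely because $\alpha(cx)=c\,\alpha(x)$ for $c\in C_{n}^{!}$, and which is manifestly $B_{n}^{!}$-linear and homogeneous. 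First I would observe that $\mu$ is surjective: its image is $B_{n}^{!}\cdot\alpha(F_{C}(M))=B_{n}^{!}F_{C}(M)$, which is all of $F_{B}(M)$ by the lemma asserting $B_{n}^{!}F_{C}(M)=F_{B}(M)$. Since $B_{n}$ and $C_{n}$ have finite global dimension and $M$ is finitely generated, $F_{B}(M)$ and $F_{C}(M)$ are finite dimensional over $\Bbbk$, so it will suffice to show that the homogeneous components of the source and target of $\mu$ have the same dimension.

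For the source, recall from Section 1 the decomposition $B_{n}^{!}=C_{n}^{!}\oplus ZC_{n}^{!}$ of $B_{n}^{!}$ as a graded right $C_{n}^{!}$-module, together with the equalities $\dim_{\Bbbk}(ZC_{n}^{!})_{j}=\dim_{\Bbbk}Z(C_{n}^{!})_{j-1}=\binom{n}{j-1}=\dim_{\Bbbk}(C_{n}^{!})_{j-1}$. These say that left multiplication by $Z$ is injective on each $(C_{n}^{!})_{j-1}$, and since $(Zc)a=Z(ca)$ the map $c\mapsto Zc$ is an isomorphism $C_{n}^{!}[-1]\xrightarrow{\sim}ZC_{n}^{!}$ of graded right $C_{n}^{!}$-modules; thus $B_{n}^{!}\cong C_{n}^{!}\oplus C_{n}^{!}[-1]$ as a graded right $C_{n}^{!}$-module, and
\[
B_{n}^{!}\otimes_{C_{n}^{!}}F_{C}(M)\ \cong\ F_{C}(M)\ \oplus\ F_{C}(M)[-1]
\]
as graded $\Bbbk$-vector spaces. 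Writing $n_{k}=\dim_{\Bbbk}\mathrm{Ext}^{k}_{C_{n}}(M,\Bbbk)$, the degree-$k$ component of the source therefore has dimension $n_{k}+n_{k-1}$.

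For the target I would use the short exact sequence of graded $B_{n}^{!}$-modules
\[
0\longrightarrow F_{C}(M)\xrightarrow{\ \alpha\ }F_{B}(M)\longrightarrow\bigoplus_{k=1}^{m}S^{n_{k-1}}[k]\longrightarrow 0
\]
proved above (applied with $N=M$, which is legitimate since $ZM=0$). Its cokernel term is concentrated, in degree $k$, in an $n_{k-1}$-dimensional space, so $\dim_{\Bbbk}F_{B}(M)_{k}=n_{k}+n_{k-1}$ as well. Hence $\mu$ is a homogeneous epimorphism between graded vector spaces whose homogeneous components are finite dimensional of equal dimension, so $\mu$ is bijective; being $B_{n}^{!}$-linear it is the required isomorphism of $B_{n}^{!}$-modules.

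I expect the one point that needs genuine care to be the middle paragraph: checking that $B_{n}^{!}$ is really \emph{free} (not merely projective) of the stated rank over $C_{n}^{!}$ on the correct side and with the correct internal shift, so that the two dimension counts agree degree by degree rather than merely in total. Everything else is formal and rests entirely on the two preceding lemmas.
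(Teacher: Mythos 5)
Your proof is correct and follows essentially the same route as the paper: both rest on the surjectivity of the multiplication map (from the lemma $B_{n}^{!}F_{C}(M)=F_{B}(M)$), the right $C_{n}^{!}$-module decomposition $B_{n}^{!}=C_{n}^{!}\oplus ZC_{n}^{!}$, the exact sequence $0\rightarrow F_{C}(M)\rightarrow F_{B}(M)\rightarrow \oplus S^{n_{k-1}}[k]\rightarrow 0$, and a dimension count. The only cosmetic difference is that you compare graded dimensions of source and target directly, while the paper tensors the split sequence $0\rightarrow C_{n}^{!}\rightarrow B_{n}^{!}\rightarrow ZC_{n}^{!}\rightarrow 0$ with $F_{C}(M)$ and concludes via the resulting commutative diagram that the outer vertical maps, hence $\mu$, are isomorphisms.
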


\begin{proof}
We proved in the previous lemma that the map $\mu $:$B_{n}^{!}\underset{C^{!}%
}{\otimes }F_{C}$($M$)$\rightarrow F_{B}$($M$) given by multiplication is
surjective and we know that $B_{n}^{!}$=$C_{n}^{!}\oplus $Z$C_{n}^{!}$, so
there is a splittable sequence of $C_{n}^{!}$-modules: $0\rightarrow
C_{n}^{!}\rightarrow B_{n}^{!}\rightarrow $Z$C_{n}^{!}\rightarrow 0$ which
induces a commutative exact diagram:\newline
$%
\begin{array}{ccccccc}
0\rightarrow  & C_{n}^{!}\underset{C^{!}}{\otimes }F_{C}\text{(}M\text{)} & 
\rightarrow  & B_{n}^{!}\underset{C^{!}}{\otimes }F_{C}\text{(}M\text{)} & 
\rightarrow  & \text{Z}C_{n}^{!}\underset{C^{!}}{\otimes }F_{C}\text{(}M%
\text{)} & \rightarrow 0 \\ 
& \downarrow \cong  &  & \downarrow \mu  &  & \downarrow \mu ^{\prime \prime
} &  \\ 
0\rightarrow  & F_{C}\text{(}M\text{)} & \rightarrow  & F_{B}\text{(}M\text{)%
} & \rightarrow  & \overset{m}{\oplus }S^{n_{k-1}}\text{[k]} & \rightarrow 0
\\ 
&  &  & \downarrow  &  & \downarrow  &  \\ 
&  &  & 0 &  & 0 & 
\end{array}%
$

By dimensions $\mu ^{\prime \prime }$ is an isomorphism, therefore $\mu $ is
an isomorphism.
\end{proof}

It as proved in [MM], [MS] that the duality $\phi $:gr$_{_{_{B_{n}^{!}}}}%
\rightarrow \mathcal{LC}$P$_{B_{n}}$, with $\mathcal{LC}$P$_{B_{n}}$ the
category of linear complexes of graded projective $B_{n}$-modules, induces a
duality of triangulated categories $\overline{\phi }:$\underline{gr}$%
_{_{B_{n}^{!}}}\rightarrow $D$^{b}$(Qgr$_{B_{n}^{op}}$). In particular given
a complex $\pi X^{\circ }$ in D$^{b}$(Qgr$_{B_{n}^{op}}$), there is a
totally linear complex (see [MM] for definition) $Y^{\circ }$ such that $\pi
Y^{\circ }$ is isomorphic to $\pi X^{\circ }$, Moreover, $Y^{\circ }$ is
quasi isomorphic to a linear complex of projective $P^{\circ }$ and by [MS]$%
P^{\circ }=\phi $($M$). Therefore $\pi X^{\circ }$ is quasi isomorphic to $%
\pi \phi $($M$).

It was proved in [MS] that H$^{i}$($\phi $($M$))=0 for all i$\neq $0 if and
only if $M$ is Koszul and in this case if $G_{B_{n}^{!}}:K_{B_{n}^{!}}%
\rightarrow K_{B_{n}}$ is Koszul duality, then H$^{0}$($\phi $($M$))$\cong
G_{B_{n}^{!}}$($M$).

Since $B_{n}^{!}$ is a finite dimensional algebra, it follows by [MZ] that
for any finitely generated $B_{n}^{!}$-module $M$ there exists an integer k$%
\geq $0 such that $\Omega ^{k}M$ is weakly Koszul.

since $\Omega $ is the shift in the triangulated category \underline{gr}$%
_{_{B_{n}^{!}}}$and $\overline{\phi }$ is a duality it follows $\overline{%
\phi }$($\Omega ^{k}M$)$\cong \overline{\phi }$($M$)[k]. Being the category $%
\mathcal{T}$ triangulated, it is invariant under shift and $\pi \phi $($M$)$%
\in \mathcal{T}$ if and only if $\pi \phi $($\Omega ^{k}M$)$\in \mathcal{T}$.

We may assume $M$ is weakly Koszul and $M$=$\underset{i\geq 0}{\sum }M_{i}$, 
$M_{0}\neq $0. By [MZ] there exists an exact sequence: $0\rightarrow
K_{M}\rightarrow M\rightarrow L\rightarrow 0$ with $K_{M}<M_{0}>$ generated
by the degree zero part of $M$, $K_{M}$ is Koszul and $J^{j}K_{M}$=$%
J^{j}M\cap K_{M}$ for all j$>$0.

Being $\phi $ an exact functor there is an exact sequence: $0\rightarrow
\phi $($L$)$\rightarrow \phi $($M$)$\rightarrow \phi $($K_{M}$)$\rightarrow
0 $ of complexes of $B_{n}$-modules, which induces a long exact sequence:

...$\rightarrow $H$^{1}$($\phi $($L$))$\rightarrow $H$^{1}$($\phi $($M$))$%
\rightarrow $H$^{1}$($\phi $($K_{M}$))$\rightarrow $H$^{0}$($\phi $($L$))$%
\rightarrow $H$^{0}$($\phi $($M$))$\rightarrow $H$^{0}$($\phi $($K_{M}$))$%
\rightarrow 0$

where H$^{0}$($\phi $($M$))$\cong $H$^{0}$($\phi $($K_{M}$)) and H$^{i}$($%
\phi $($L$))$\cong $H$^{i}$($\phi $($M$) for all i$\neq $0. Being $K_{M}$
Koszul H$^{0}$($\phi $($K_{M}$))$\cong G_{B_{n}^{!}}$($K_{M})$ and $%
G_{B_{n}^{!}}$($K_{M}$) is of Z-torsion.

According to [MZ] there is a filtration: $M$=$U_{p}\supset U_{p-1}\supset
...U_{1}\supset U_{0}$=$K_{M}$ such that $U_{i}$/$U_{i-1}$is Koszul and $%
J^{k}U_{i}\cap U_{i-1}$=$J^{k}U_{i-1}.$

The module $L$ is weakly Koszul and it has a filtration: $L$= $%
U_{p}/U_{0}\supset U_{p-1}/U_{0}\supset $\linebreak ...$U_{1}/U_{0}$ with
factors Koszul, it follows by induction each $G_{B_{n}^{!}}$($U_{i}$/$%
U_{i-1} $)=$V_{i}$ is a Koszul $B_{n}$-module of Z-torsion.

Each $V_{i}$ has a filtration: $V_{i}\supset $Z$V_{i}\supset $Z$%
^{2}V_{i}\supset $...$\supset $Z$^{k_{i}}V_{i}\supset 0$, Z$%
^{k_{i}}V_{i}\neq $0, Z$^{k_{i}+1}V_{i}$=0. After a truncation $V_{i\geq
n_{i}}$we may assume all Z$^{j}V_{i}$ Koszul. But $V_{i\geq n_{i}}$=$%
J^{n_{i}}V_{i}\cong G_{B_{n}^{!}}$($\Omega ^{n_{i}}$($U_{i}$/$U_{i-1}$)).
Taking n=max\{n$_{i}$\} we change $M$ for $\Omega ^{n}$($M$), which is
weakly Koszul with filtration: $\Omega ^{n}M$=$\Omega ^{n}U_{p}\supset
\Omega ^{n}U_{p-1}\supset $...$\Omega ^{n}U_{1}\supset \Omega ^{n}U_{0}$.

We may assume all $Z^{j}V_{i}$ are Koszul. There exist exact sequences:

*) $0\rightarrow F_{B_{n}}$(Z$^{k_{i}}V_{i}$)[-k$_{i}$]$\rightarrow
F_{B_{n}} $(Z$^{k_{i}-1}V_{i}$/$Z^{k_{i}}V_{i}$)[-k$_{i}$+1]...$\rightarrow $

$F_{B_{n}}$($V_{i}$/Z$V_{i}$)$\rightarrow U_{i}$/$U_{i-1}\rightarrow 0$

where each $F_{B_{n}}$(Z$^{j}V_{i}$/Z$^{j+1}V_{i}$)$\cong B_{n}^{!}\underset{%
C_{n}^{!}}{\otimes }X_{ij}$ is an induced module of a Koszul $C_{n}^{!}$%
-module $X_{ij}.$

\begin{lemma}
Let $R$ be a $%
\mathbb{Z}
$graded $\Bbbk $-algebra, with $\Bbbk $ a field, $M$ a graded left $R$%
-module and $N$ a graded right $R$-module. Then $M\underset{R}{\otimes }N$
is a graded $\Bbbk $-module such that $M\underset{R}{\otimes }N$[j]$\cong $($%
M\underset{R}{\otimes }N$)[j]as graded $\Bbbk $-modules.
\end{lemma}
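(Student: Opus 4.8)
The plan is to prove the lemma by unwinding the definition of the grading carried by a tensor product over a graded ring and comparing it with the definition of the shift; no homological input is needed, and in fact the isomorphism will be the identity map of the common underlying $\Bbbk$-module.

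First I would recall how $M\otimes_{R}N$ acquires its grading. The $\Bbbk$-module $M\otimes_{\Bbbk}N$ carries the total grading $(M\otimes_{\Bbbk}N)_{k}=\bigoplus_{a+b=k}M_{a}\otimes_{\Bbbk}N_{b}$, and $M\otimes_{R}N$ is the quotient of $M\otimes_{\Bbbk}N$ by the $\Bbbk$-subspace $W$ generated by all elements $ma\otimes n-m\otimes an$ with $m\in M$, $a\in R$, $n\in N$ homogeneous. Since $R$, $M$ and $N$ are graded, each such generator is homogeneous, of degree $\deg m+\deg a+\deg n$; hence $W$ is a graded $\Bbbk$-subspace, the quotient inherits a grading, and $(M\otimes_{R}N)_{k}$ is precisely the image of $\bigoplus_{a+b=k}M_{a}\otimes_{\Bbbk}N_{b}$. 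This is the graded $\Bbbk$-module structure meant in the statement.

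Next I would use that shifting only relabels the grading: $N[j]$ and $N$ have the same underlying $R$-module, with $(N[j])_{k}=N_{k+j}$, so an element lying in $N_{b}$ has degree $b-j$ when viewed in $N[j]$. Consequently $M\otimes_{R}(N[j])$ and $M\otimes_{R}N$ have literally the same underlying $\Bbbk$-module, and I claim the identity map of that module is an isomorphism $M\otimes_{R}(N[j])\to(M\otimes_{R}N)[j]$ of graded $\Bbbk$-modules. It suffices to check that it preserves degree: a homogeneous element $m\otimes n$ with $m\in M_{a}$, $n\in N_{b}$ has degree $a+(b-j)$ when regarded in $M\otimes_{R}(N[j])$, while it has degree $(a+b)-j$ when regarded in $(M\otimes_{R}N)[j]$; these two numbers agree, and such elements span. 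With the opposite sign convention for $[\,\cdot\,]$ the computation is word for word the same.

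I do not expect any genuine obstacle: the entire content is the observation that the defining relations of the $R$-balanced tensor product are homogeneous, so that the total grading on $M\otimes_{\Bbbk}N$ descends, together with the one-line degree count above. The only point requiring a little care is to keep the shift convention consistent on the two sides of the claimed isomorphism; once that is fixed, the identity map does the job.
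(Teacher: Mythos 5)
Your proof is correct and follows essentially the same route as the paper: both present $M\otimes_{R}N$ as the quotient (cokernel of the balancing map) of the totally graded $M\otimes_{\Bbbk}N$, observe that the balancing relations are homogeneous, and deduce compatibility with the shift — the paper phrases this as an isomorphism of exact sequences, while you make the degree count on the identity map explicit. No gap.
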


\begin{proof}
Recall the definition of the graded tensor product [Mac]:

Let $\psi $:$M\underset{\Bbbk }{\otimes }R\underset{\Bbbk }{\otimes }%
N\rightarrow M\underset{\Bbbk }{\otimes }N$ be the map: $\psi $(m$\otimes $r$%
\otimes $n)=mr$\otimes $n-m$\otimes $rn. Then Cok$\psi =M\underset{R}{%
\otimes }N.$

The $\Bbbk $- module $M\underset{\Bbbk }{\otimes }N$ has grading: ($M%
\underset{\Bbbk }{\otimes }N$)$_{k}$=$\underset{i+j=k}{\sum }$ $M_{i}%
\underset{\Bbbk }{\otimes }N_{j}$. It follows $M\underset{\Bbbk }{\otimes }N$
[j]$\cong $($M\underset{\Bbbk }{\otimes }N$ )[j].\newline

and there is an isomorphism of exact sequences:

$%
\begin{array}{cccccc}
M\underset{\Bbbk }{\otimes }R\underset{\Bbbk }{\otimes }\text{(}N\text{[j])}
& \rightarrow & M\underset{\Bbbk }{\otimes }\text{(}N\text{[j])} & 
\rightarrow & M\underset{R}{\otimes \text{(}}N\text{[j])} & \rightarrow 0 \\ 
\downarrow \cong &  & \downarrow \cong &  & \downarrow \cong &  \\ 
\text{(}M\underset{\Bbbk }{\otimes }R\underset{\Bbbk }{\otimes }N\text{)[j]}
& \rightarrow & \text{(}M\underset{\Bbbk }{\otimes }N\text{)[j]} & 
\rightarrow & \text{(}M\underset{R}{\otimes }N\text{)[j]} & \rightarrow 0%
\end{array}%
$
\end{proof}

\begin{lemma}
Let $B_{n}^{!}$ and $C_{n}^{!}$ be the algebras given above, for any
finitely generated graded $C_{n}^{!}$-module $M$ there is an isomorphism: $%
\Omega _{B_{n}^{!}}$($B_{n}^{!}\underset{C_{n}^{!}}{\otimes }M$)$\cong
B_{n}^{!}\underset{C_{n}^{!}}{\otimes }\Omega _{C_{n}^{!}}$($M$).

\begin{proof}
Let $0\rightarrow \Omega _{C_{n}^{!}}$($M$)$\rightarrow F\rightarrow
M\rightarrow 0$ be an exact sequence with $F$ free of rank r, the graded
projective cover of $M$. Then $\Omega _{C_{n}^{!}}$($M)\subset
J_{C_{n}^{!}}F.$

We proved $B_{n}^{!}=C_{n}^{!}\oplus $Z$C_{n}^{!}$, therefore $J_{B_{n}^{!}}$%
=$J_{C_{n}^{!}}$+Z$C_{n}^{!}.$ It follows: $B_{n}^{!}\underset{C_{n}^{!}}{%
\otimes }J_{C_{n}^{!}}$=$C_{n}^{!}\underset{C_{n}^{!}}{\otimes }%
J_{C_{n}^{!}} $+Z$C_{n}^{!}\underset{C_{n}^{!}}{\otimes }J_{C_{n}^{!}}$=$%
J_{C_{n}^{!}}$+Z$\underset{C_{n}^{!}}{\otimes }J_{C_{n}^{!}}\subset
J_{B_{n}^{!}}.$

Therefore: $B_{n}^{!}\underset{C_{n}^{!}}{\otimes }\Omega _{C_{n}^{!}}$($M$)$%
\subset B_{n}^{!}\underset{C_{n}^{!}}{\otimes \underset{r}{\oplus }}%
J_{C_{n}^{!}}\cong \underset{r}{\oplus }B_{n}^{!}\underset{C_{n}^{!}}{%
\otimes }J_{C_{n}^{!}}\subset \underset{r}{\oplus }J_{B_{n}^{!}}\cong
J_{B_{n}^{!}}$($B_{n}^{!}\underset{C_{n}^{!}}{\otimes }F$).

It follows: $0\rightarrow B_{n}^{!}\underset{C_{n}^{!}}{\otimes }\Omega
_{C_{n}^{!}}$($M$)$\rightarrow B_{n}^{!}\underset{C_{n}^{!}}{\otimes }%
F\rightarrow B_{n}^{!}\underset{C_{n}^{!}}{\otimes }M\rightarrow 0$ is exact
and $B_{n}^{!}\underset{C_{n}^{!}}{\otimes }F$ is the graded projective
cover of $B_{n}^{!}\underset{C_{n}^{!}}{\otimes }M$. Then $\Omega
_{B_{n}^{!}}$($B_{n}^{!}\underset{C_{n}^{!}}{\otimes }M$)$\cong B_{n}^{!}%
\underset{C_{n}^{!}}{\otimes }\Omega _{C_{n}^{!}}$($M$).
\end{proof}
\end{lemma}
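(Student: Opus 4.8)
The plan is to tensor the minimal graded projective cover of $M$ over $C_n^!$ up to $B_n^!$ and to check that the result is still a projective cover; the kernel of the induced surjection then computes the first syzygy over $B_n^!$.

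First I would choose a minimal graded projective presentation $0\to\Omega_{C_n^!}(M)\to F\to M\to 0$ over $C_n^!$, with $F=\bigoplus_{s}C_n^![-d_s]$ a finite direct sum of shifts of $C_n^!$ (this exists and is unique because $C_n^!$ is positively graded with $(C_n^!)_0=\Bbbk$); minimality gives $\Omega_{C_n^!}(M)\subseteq J_{C_n^!}F$. Then I would apply the functor $B_n^!\underset{C_n^!}{\otimes}-$. The structural inputs I would use are those recalled in Section~1: the right $C_n^!$-module decomposition $B_n^!=C_n^!\oplus ZC_n^!$, with $ZC_n^!\cong C_n^![-1]$ free of rank one (the multiplication-by-$Z$ map on $C_n^!$ is injective, which one reads off from $\dim_\Bbbk(B_n^!)_j=\binom{n}{j}+\binom{n}{j-1}$). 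In particular $B_n^!$ is free, hence flat, as a right $C_n^!$-module, so tensoring keeps the presentation exact, giving $0\to B_n^!\underset{C_n^!}{\otimes}\Omega_{C_n^!}(M)\to B_n^!\underset{C_n^!}{\otimes}F\to B_n^!\underset{C_n^!}{\otimes}M\to 0$; and, by the shift lemma just proved, $B_n^!\underset{C_n^!}{\otimes}F\cong\bigoplus_s B_n^![-d_s]$ is $B_n^!$-free of the same rank.

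The step that carries the weight is to show that $B_n^!\underset{C_n^!}{\otimes}F\to B_n^!\underset{C_n^!}{\otimes}M$ is actually a projective cover, i.e. that $B_n^!\underset{C_n^!}{\otimes}\Omega_{C_n^!}(M)\subseteq J_{B_n^!}(B_n^!\underset{C_n^!}{\otimes}F)$. Here I would use $J_{B_n^!}=J_{C_n^!}+ZC_n^!$: by flatness $B_n^!\underset{C_n^!}{\otimes}J_{C_n^!}$ embeds in $B_n^!\underset{C_n^!}{\otimes}C_n^!=B_n^!$ with image $B_n^!J_{C_n^!}=J_{C_n^!}+ZJ_{C_n^!}$ (using centrality of $Z$), and $J_{C_n^!}+ZJ_{C_n^!}\subseteq J_{C_n^!}+ZC_n^!=J_{B_n^!}$. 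Consequently $B_n^!\underset{C_n^!}{\otimes}\Omega_{C_n^!}(M)\subseteq B_n^!\underset{C_n^!}{\otimes}J_{C_n^!}F\cong\bigoplus_s (B_n^!\underset{C_n^!}{\otimes}J_{C_n^!})[-d_s]\subseteq\bigoplus_s J_{B_n^!}[-d_s]=J_{B_n^!}(B_n^!\underset{C_n^!}{\otimes}F)$. By uniqueness of graded projective covers, $B_n^!\underset{C_n^!}{\otimes}F$ is the projective cover of $B_n^!\underset{C_n^!}{\otimes}M$, so its kernel is $\Omega_{B_n^!}(B_n^!\underset{C_n^!}{\otimes}M)$ by definition, which is the asserted isomorphism.

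I expect the main obstacle to be precisely this last inclusion: keeping the identifications straight so that tensoring up does not enlarge the Jacobson radical beyond $J_{B_n^!}$ --- everything hinges on the one-sided decomposition $B_n^!=C_n^!\oplus ZC_n^!$ together with centrality of $Z$, which is what forces $B_n^!J_{C_n^!}=J_{C_n^!}+ZJ_{C_n^!}$. A secondary care point is to recognize $B_n^!\underset{C_n^!}{\otimes}F$ as genuinely \emph{free} over $B_n^!$ (not merely projective), so that the minimal-cover reading, and hence the syzygy identification, is unambiguous.
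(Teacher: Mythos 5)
Your proposal is correct and follows essentially the same route as the paper: tensor the minimal graded projective cover over $C_n^!$ up to $B_n^!$, use the decomposition $B_n^!=C_n^!\oplus ZC_n^!$ and $J_{B_n^!}=J_{C_n^!}+ZC_n^!$ to see that the tensored syzygy lands inside $J_{B_n^!}(B_n^!\underset{C_n^!}{\otimes}F)$, and conclude by uniqueness of graded projective covers. You are merely more explicit than the paper about the flatness of $B_n^!$ as a right $C_n^!$-module (which keeps the sequence exact) and about $B_n^!\underset{C_n^!}{\otimes}F$ being free, which the paper leaves implicit.
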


\begin{lemma}
Let $B_{n}^{!}$ and $C_{n}^{!}$ be the algebras given above and let $M$ be a
Koszul $C_{n}^{!}$-module. Then $B_{n}^{!}\underset{C_{n}^{!}}{\otimes }M$
is Koszul and $G_{B_{n}^{!}}$($B_{n}^{!}\underset{C_{n}^{!}}{\otimes }M$)$%
\cong G_{C_{n}^{!}}(M$).
\end{lemma}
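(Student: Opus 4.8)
The plan is to prove the two assertions separately, reducing each to results already in place.

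\emph{$B_{n}^{!}\otimes_{C_{n}^{!}}M$ is Koszul.} The preceding lemma gives $\Omega_{B_{n}^{!}}(B_{n}^{!}\otimes_{C_{n}^{!}}M)\cong B_{n}^{!}\otimes_{C_{n}^{!}}\Omega_{C_{n}^{!}}(M)$ for every finitely generated graded $C_{n}^{!}$-module, so by iteration $\Omega_{B_{n}^{!}}^{k}(B_{n}^{!}\otimes_{C_{n}^{!}}M)\cong B_{n}^{!}\otimes_{C_{n}^{!}}\Omega_{C_{n}^{!}}^{k}(M)$ for all $k\geq 0$. The only extra point is the elementary observation that if a graded $C_{n}^{!}$-module $N$ is generated in degree $k$ then so is the $B_{n}^{!}$-module $B_{n}^{!}\otimes_{C_{n}^{!}}N$: writing an element of $N$ as $\sum c_{j}n_{j}$ with $n_{j}\in N_{k}$ and $c_{j}\in C_{n}^{!}$, one has $b\otimes(\sum c_{j}n_{j})=\sum (bc_{j})\cdot(1\otimes n_{j})$ with each $1\otimes n_{j}$ in degree $k$, while the whole module is concentrated in degrees $\geq k$. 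Since $M$ Koszul means $M$ is generated in degree $0$ and each $\Omega_{C_{n}^{!}}^{k}(M)$ is generated in degree $k$, the displayed isomorphisms show that $B_{n}^{!}\otimes_{C_{n}^{!}}M$ is generated in degree $0$ with all of its $B_{n}^{!}$-syzygies generated in the expected degree; that is, $B_{n}^{!}\otimes_{C_{n}^{!}}M$ is Koszul.

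\emph{$G_{B_{n}^{!}}(B_{n}^{!}\otimes_{C_{n}^{!}}M)\cong G_{C_{n}^{!}}(M)$.} By the right $C_{n}^{!}$-module decomposition $B_{n}^{!}=C_{n}^{!}\oplus ZC_{n}^{!}$ recalled in Section 1, in which $ZC_{n}^{!}\cong C_{n}^{!}[-1]$ is free (compare the dimension formula for $(B_{n}^{!})_{j}$ there), $B_{n}^{!}$ is flat as a right $C_{n}^{!}$-module; hence $B_{n}^{!}\otimes_{C_{n}^{!}}-$ is exact, and, being left adjoint to the exact restriction functor, takes projectives to projectives. Applying it to a minimal graded projective resolution of $M$ over $C_{n}^{!}$ and using the Hom--tensor adjunction yields, for each $k$, a natural isomorphism $\mathrm{Ext}_{B_{n}^{!}}^{k}(B_{n}^{!}\otimes_{C_{n}^{!}}M,B_{n0}^{!})\cong \mathrm{Ext}_{C_{n}^{!}}^{k}(M,B_{n0}^{!}|_{C_{n}^{!}})$, and $B_{n0}^{!}|_{C_{n}^{!}}=C_{n0}^{!}$ because $B_{n0}^{!}=(B_{n}^{!})_{0}=(C_{n}^{!})_{0}$ with $C_{n}^{!}$ acting through its augmentation. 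Summing over $k$ identifies $G_{B_{n}^{!}}(B_{n}^{!}\otimes_{C_{n}^{!}}M)$ with $G_{C_{n}^{!}}(M)$ as graded vector spaces; to upgrade this to an isomorphism of $B_{n}$-modules one uses naturality of the adjunction in the coefficient $B_{n0}^{!}$ to match the Yoneda action of $B_{n}=\mathrm{Ext}_{B_{n}^{!}}^{\ast}(B_{n0}^{!},B_{n0}^{!})$ with that of $C_{n}=\mathrm{Ext}_{C_{n}^{!}}^{\ast}(C_{n0}^{!},C_{n0}^{!})$ composed with the restriction map $B_{n}\to C_{n}$, and checks (in degree one, where $(B_{n}^{!})_{1}=(C_{n}^{!})_{1}\oplus\Bbbk Z$ and the extra dual generator is $Z$) that this restriction map is the canonical projection $B_{n}\to B_{n}/ZB_{n}=C_{n}$. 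Thus $G_{B_{n}^{!}}(B_{n}^{!}\otimes_{C_{n}^{!}}M)\cong G_{C_{n}^{!}}(M)$, the right-hand side regarded as a $B_{n}$-module through $B_{n}\to C_{n}$ (so $Z$ acts as $0$, consistently with $G_{C_{n}^{!}}(M)\in K_{C_{n}}$).

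A second route, closer to the surrounding development, avoids the above bookkeeping: once Koszulity is known, set $N=G_{C_{n}^{!}}(M)$, a Koszul $C_{n}$-module, hence a Koszul $B_{n}$-module with $ZN=0$ by the earlier comparison of $\Omega_{B}$ with $\Omega_{C}$; the preceding Proposition then gives $F_{B_{n}}(N)\cong B_{n}^{!}\otimes_{C_{n}^{!}}F_{C_{n}}(N)\cong B_{n}^{!}\otimes_{C_{n}^{!}}M$ (using $F_{C_{n}}G_{C_{n}^{!}}\cong\mathrm{id}$), and applying $G_{B_{n}^{!}}$ together with $G_{B_{n}^{!}}F_{B_{n}}\cong\mathrm{id}$ yields $G_{B_{n}^{!}}(B_{n}^{!}\otimes_{C_{n}^{!}}M)\cong N$. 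In either approach the step needing genuine care is the identification of the module structures, i.e. that the restriction map of Yoneda algebras $B_{n}\to C_{n}$ is the canonical surjection; everything else is a routine combination of the preceding lemmas with standard homological algebra.
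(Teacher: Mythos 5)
Your main argument is correct and is essentially the paper's own proof: since $B_{n}^{!}$ is free over $C_{n}^{!}$, inducing a minimal linear resolution (equivalently, iterating the syzygy lemma $\Omega_{B_{n}^{!}}(B_{n}^{!}\otimes_{C_{n}^{!}}M)\cong B_{n}^{!}\otimes_{C_{n}^{!}}\Omega_{C_{n}^{!}}(M)$) gives Koszulity, and the Hom--tensor adjunction identifies the Ext groups, which is exactly what the paper does. Your extra care about the $B_{n}$-module structure (and the alternative route through $F_{B}\cong B_{n}^{!}\otimes_{C_{n}^{!}}F_{C}$) goes beyond the paper's proof of this lemma, but it is consistent with how the paper treats the $Z$-action in the subsequent remark and propositions.
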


\begin{proof}
Let ...$\rightarrow F_{n}$[-n]$\rightarrow F_{n-1}$[-n+1]$\rightarrow $...$%
F_{1}$[-1]$\rightarrow F_{0}\rightarrow M\rightarrow 0$ be a graded
projective resolution of $M$,with each $F_{i}$ free of rank r$_{i}.$%
Tensoring with $B_{n}^{!}\underset{C_{n}^{!}}{\otimes }$ we obtain a graded
projective resolution of $B_{n}^{!}\underset{C_{n}^{!}}{\otimes }M$: $%
\rightarrow $($B_{n}^{!}\underset{C_{n}^{!}}{\otimes }F_{n}$)[-n]$%
\rightarrow $($B_{n}^{!}\underset{C_{n}^{!}}{\otimes }F_{n-1}$)[-n+1]$%
\rightarrow $...($B_{n}^{!}\underset{C_{n}^{!}}{\otimes }F_{1}$)[-1]$%
\rightarrow B_{n}^{!}\underset{C_{n}^{!}}{\otimes }F_{0}\rightarrow B_{n}^{!}%
\underset{C_{n}^{!}}{\otimes }M\rightarrow 0$ with each $B_{n}^{!}\underset{%
C_{n}^{!}}{\otimes }F_{i}$ free $B_{n}^{!}$-modules of rank r$_{i}$.

Moreover, Ext$_{B_{n}^{!}}^{n}$($B_{n}^{!}\underset{C_{n}^{!}}{\otimes }M$,$%
\Bbbk $)$\cong $Hom$_{B_{n}^{!}}$($\Omega ^{n}$($B_{n}^{!}\underset{C_{n}^{!}%
}{\otimes }M$),$\Bbbk $)$\cong $

Hom$_{B_{n}^{!}}$($B_{n}^{!}\underset{C_{n}^{!}}{\otimes }\Omega ^{n}M$,$%
\Bbbk $)$\cong $Hom$_{C_{n}^{!}}$($\Omega ^{n}M$,$\Bbbk $)$\cong $Ext$%
_{C_{n}^{!}}^{n}$($M,\Bbbk $).

Therefore: $G_{B_{n}^{!}}$($B_{n}^{!}\underset{C_{n}^{!}}{\otimes }M$)$\cong
G_{C_{n}^{!}}$($M$).
\end{proof}

\begin{remark}
\bigskip To be $G_{B_{n}^{!}}$($B_{n}^{!}\underset{C_{n}^{!}}{\otimes }M$) a 
$C_{n}$-module means: Z$G_{B_{n}^{!}}$($B_{n}^{!}\underset{C_{n}^{!}}{%
\otimes }M$)=0.
\end{remark}

We know $B_{n}\cong \underset{m\geq 0}{\oplus }$Ext$_{B_{n}^{!}}^{m}$($\Bbbk 
$,$\Bbbk $), $C_{n}\underset{m\geq 0}{\oplus }$Ext$_{C_{n}^{!}}^{m}$($\Bbbk $%
,$\Bbbk $), and $B_{n}$/Z$B_{n}\cong C_{n}$. Since $C_{n}^{!}$ is a sub
algebra of $B_{n}^{!}$, given an extension x$:0\rightarrow \Bbbk \rightarrow
E_{1}\rightarrow E_{2}\rightarrow ...E_{n}\rightarrow \Bbbk $ $\rightarrow 0$
of $B_{n}^{!}$, we obtain by restriction of scalars an extension \newline
resx: $0\rightarrow $res$\Bbbk \rightarrow $res$E_{1}\rightarrow $res$%
E_{2}\rightarrow $...res$E_{n}\rightarrow $res$\Bbbk $ $\rightarrow 0$ of $%
C_{n}^{!}$-modules, where res$M$ is the module $M$ with multiplication of
scalars restricted to $C_{n}^{!}.$It is clear res(xy)=res(x)res(y) and
restriction gives an homomorphism of graded $\Bbbk $-algebras: res:$\underset%
{m\geq 0}{\oplus }$Ext$_{B_{n}^{!}}^{m}$($\Bbbk $,$\Bbbk $)$\rightarrow 
\underset{m\geq 0}{\oplus }$Ext$_{C_{n}^{!}}^{m}$($\Bbbk $,$\Bbbk $).

\begin{lemma}
There is an homomorphism: $\rho $:Ext$_{B_{n}^{!}}^{1}$($\Bbbk $,$\Bbbk $)$%
\rightarrow $Ext$_{B_{n}^{!}}^{1}$((B$_{n}^{!}\underset{C_{n}^{!}}{\otimes }%
\Bbbk $,$\Bbbk $), given by the Yoneda product $\rho $(x)=x$\mu $ (pull
back) of the exact sequence $x$ with the multiplication map $\mu $:B$_{n}^{!}%
\underset{C_{n}^{!}}{\otimes }\Bbbk \rightarrow \Bbbk $, such that the
composition of the map, \linebreak $\psi _{1}$:Ext$_{B_{n}^{!}}^{1}$((B$%
_{n}^{!}\underset{C_{n}^{!}}{\otimes }\Bbbk $,$\Bbbk $)$\rightarrow $Ext$%
_{C_{n}^{!}}^{1}$($\Bbbk $,$\Bbbk $) in the previous lemma with $\rho $, is
the restriction: $\psi \rho $=res.
\end{lemma}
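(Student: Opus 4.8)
My plan is to prove the two assertions of the lemma separately. That $\rho$ is a homomorphism is immediate: for a $B_n^!$-extension $x\colon 0\to\Bbbk\to E\to\Bbbk\to 0$, the pull-back of $x$ along the fixed $B_n^!$-linear multiplication map $\mu\colon B_n^!\otimes_{C_n^!}\Bbbk\to\Bbbk$ is an extension $x\mu\colon 0\to\Bbbk\to E'\to B_n^!\otimes_{C_n^!}\Bbbk\to 0$, hence an element of $\mathrm{Ext}^1_{B_n^!}(B_n^!\otimes_{C_n^!}\Bbbk,\Bbbk)$; and since pull-back along a fixed morphism is compatible with the Baer sum and with the $\Bbbk$-action, $x\mapsto x\mu$ is a homomorphism of (graded) $\Bbbk$-vector spaces.

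For the identity $\psi\rho=\mathrm{res}$ I would fix the minimal graded projective resolution $P_\bullet\to\Bbbk$ of $\Bbbk$ over $C_n^!$, with $P_0=C_n^!$, and let $q\colon B_n^!\to B_n^!\otimes_{C_n^!}\Bbbk$ be the canonical surjection induced by $C_n^!\to(C_n^!)_0$. Since $B_n^!=C_n^!\oplus ZC_n^!$ is free as a right $C_n^!$-module, applying $B_n^!\otimes_{C_n^!}-$ to $P_\bullet$ gives a graded projective resolution $B_n^!\otimes_{C_n^!}P_\bullet\to B_n^!\otimes_{C_n^!}\Bbbk$ over $B_n^!$; together with the identification $\Omega_{B_n^!}(B_n^!\otimes_{C_n^!}M)\cong B_n^!\otimes_{C_n^!}\Omega_{C_n^!}M$ of the preceding lemma, this is exactly the resolution computing $\psi_1$. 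The crucial observation is that $\mu q\colon B_n^!\to\Bbbk$ is the augmentation of $B_n^!$, and that its restriction along $C_n^!\hookrightarrow B_n^!$ is the augmentation of $C_n^!$. Consequently a single $B_n^!$-linear lift $\tilde e\colon B_n^!\to E$ of the augmentation through $p\colon E\to\Bbbk$ does double duty: $(\tilde e,q)\colon B_n^!\to E\times_{\Bbbk}(B_n^!\otimes_{C_n^!}\Bbbk)=E'$ is a comparison lift of $B_n^!\otimes_{C_n^!}P_\bullet$ into $x\mu$, while $\tilde e|_{C_n^!}\colon C_n^!\to E$ is a comparison lift of $P_\bullet$ into $\mathrm{res}(x)$. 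Both induce the same first cochain $P_1\to\Bbbk$, sending $p\in P_1$ to the unique $\lambda\in\Bbbk$ with $\tilde e(d_1 p)=i(\lambda)$, once $\mathrm{Hom}_{B_n^!}(B_n^!\otimes_{C_n^!}P_1,\Bbbk)$ is identified with $\mathrm{Hom}_{C_n^!}(P_1,\Bbbk)$ via the adjunction built into $\psi_1$. Therefore $\psi_1(x\mu)$ and $\mathrm{res}(x)$ represent the same class, which is the asserted identity.

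The one point requiring care is to match the definition of $\psi_1$ inherited from the previous lemma with this concrete cochain computation: that the change-of-rings isomorphism $\mathrm{Hom}_{B_n^!}(B_n^!\otimes_{C_n^!}Y,\Bbbk)\cong\mathrm{Hom}_{C_n^!}(Y,\Bbbk)$ entering $\psi_1$ is precomposition with the unit $Y\to B_n^!\otimes_{C_n^!}Y$, and then that the two comparison lifts are chosen compatibly, as above. Once $\tilde e$ is fixed the two cocycles are literally equal, so no coboundary correction is needed; the remaining ingredients — exactness of restriction, its commuting with pull-backs, functoriality of pull-back, and homogeneity in degree zero of all maps involved — are routine. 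I expect this bookkeeping to be the main obstacle; everything else is formal.
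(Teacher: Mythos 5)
Your argument is correct, and it reaches the identity $\psi\rho=\mathrm{res}$ by a slightly different mechanism than the paper. The paper stays entirely at the level of Yoneda extensions: it tensors the extension $x$ up to $B_n^!\otimes_{C_n^!}x$, splits the multiplication-induced map of extensions into two squares whose middle row $0\to\Bbbk\to W\to B_n^!\otimes_{C_n^!}\Bbbk\to 0$ is exactly $\rho(x)=x\mu$, then pulls this back along the unit $j:\Bbbk\to B_n^!\otimes_{C_n^!}\Bbbk$ and concludes from $\mu j=1$ that the resulting $C_n^!$-extension is $\mathrm{res}(x)$; the only input from the previous lemma it uses is that the adjunction $\alpha(f)=fj$ realizes $\psi_1$ at the level of representing maps. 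You instead compute with the induced resolution $B_n^!\otimes_{C_n^!}P_\bullet$ and show that a single lift $\tilde e$ of the augmentation produces literally the same first cochain for $\psi_1(x\mu)$ (after the adjunction, i.e.\ precomposition with the unit) and for $\mathrm{res}(x)$. The two routes rest on the same two facts — the adjunction is precomposition with the unit, and $\mu q$ (equivalently $\mu j$) is the augmentation (identity) — but your cocycle computation has the merit of explicitly reconciling the syzygy-based definition of $\psi_1$ from the preceding lemma with its extension-level description, a compatibility the paper's diagrammatic proof uses implicitly; the paper's version, in exchange, is shorter and avoids choosing resolutions. Your flagged "point requiring care" is indeed the only delicate point, and your treatment of it (unit $=$ precomposition, common lift $\tilde e$, no coboundary correction needed) is adequate.
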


\begin{proof}
Let x be the extension: x: $0\rightarrow \Bbbk \rightarrow E\rightarrow
\Bbbk \rightarrow 0.$ Since $B_{n}^{!}$ is a free $C_{n}^{!}$-module, there
is a commutative exact diagram\newline
$%
\begin{array}{ccccccc}
0\rightarrow  & B_{n}^{!}\underset{C_{n}^{!}}{\otimes \Bbbk } & \rightarrow 
& B_{n}^{!}\underset{C_{n}^{!}}{\otimes }E & \rightarrow  & B_{n}^{!}%
\underset{C_{n}^{!}}{\otimes }\Bbbk  & \rightarrow 0 \\ 
& \downarrow \mu  &  & \downarrow \mu  &  & \downarrow \mu  &  \\ 
0\rightarrow  & \Bbbk  & \rightarrow  & E & \rightarrow  & \Bbbk  & 
\rightarrow 0%
\end{array}%
$

with $\mu $ multiplication.

This diagram splits in two diagrams:\newline
$%
\begin{array}{ccccccc}
0\rightarrow  & B_{n}^{!}\underset{C_{n}^{!}}{\otimes \Bbbk } & \rightarrow 
& B_{n}^{!}\underset{C_{n}^{!}}{\otimes }E & \rightarrow  & B_{n}^{!}%
\underset{C_{n}^{!}}{\otimes }\Bbbk  & \rightarrow 0 \\ 
& \downarrow \mu  &  & \downarrow  &  & \downarrow \text{1} &  \\ 
0\rightarrow  & \Bbbk  & \rightarrow  & W & \rightarrow  & B_{n}^{!}\underset%
{C_{n}^{!}}{\otimes }\Bbbk  & \rightarrow 0 \\ 
& \downarrow \text{1} &  & \downarrow  &  & \downarrow \mu  &  \\ 
0\rightarrow  & \Bbbk  & \rightarrow  & E & \rightarrow  & \Bbbk  & 
\rightarrow 0%
\end{array}%
$

Then $\rho $(x)=x$\mu $=$\mu $($B_{n}^{!}\otimes $x).

For any finitely generated $C_{n}^{!}$-module $M$ there is an isomorphism $%
\alpha $ obtained as the composition of the natural isomorphisms:\newline
Hom$_{B_{n}^{!}}$($B_{n}^{!}\underset{C_{n}^{!}}{\otimes }M$,$\Bbbk $)$\cong 
$Hom$_{C_{n}^{!}}$($M$,Hom$_{B_{n}^{!}}$($B_{n}^{!}$,$\Bbbk $))$\cong $Hom$%
_{C_{n}^{!}}$($M$,$\Bbbk $).

If j:$M\rightarrow B_{n}^{!}\underset{_{C_{n}^{!}}}{\otimes }M$ be the map
j(m)=1$\otimes $m and f:$B_{n}^{!}\underset{C_{n}^{!}}{\otimes }M\rightarrow
\Bbbk $ is any map, then $\alpha $(f)=fj.

Then $\psi \rho $(x)=$\psi $(x$\mu $) is the top sequence in the commutative
exact diagram:

$%
\begin{array}{ccccccc}
0\rightarrow & \Bbbk & \rightarrow & L & \rightarrow & \Bbbk & \rightarrow 0
\\ 
& \downarrow \text{1} &  & \downarrow &  & \downarrow \text{j} &  \\ 
0\rightarrow & \Bbbk & \rightarrow & W & \rightarrow & B_{n}^{!}\underset{%
C_{n}^{!}}{\otimes }\Bbbk & \rightarrow 0 \\ 
& \downarrow \text{1} &  & \downarrow &  & \downarrow \mu &  \\ 
0\rightarrow & \Bbbk & \rightarrow & E & \rightarrow & \Bbbk & \rightarrow 0%
\end{array}%
$

Since $\mu $j=1, gluing both diagrams we obtain $\psi \rho $(x)=resx.
\end{proof}

\begin{lemma}
Under the conditions of the previous lemma the map $\rho $ is surjective.
\end{lemma}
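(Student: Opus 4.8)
The plan is to reduce the claim to the surjectivity of a restriction map on first $\mathrm{Ext}$ groups. Recall from the previous lemma the identity $\psi_1\rho=\mathrm{res}$, where $\mathrm{res}\colon\mathrm{Ext}^1_{B_n^!}(\Bbbk,\Bbbk)\to\mathrm{Ext}^1_{C_n^!}(\Bbbk,\Bbbk)$ is restriction of scalars along $C_n^!\hookrightarrow B_n^!$, and $\psi_1\colon\mathrm{Ext}^1_{B_n^!}(B_n^!\underset{C_n^!}{\otimes}\Bbbk,\Bbbk)\to\mathrm{Ext}^1_{C_n^!}(\Bbbk,\Bbbk)$ is the degree-one instance of the natural isomorphism built in the lemma on induced modules (which came from $\Omega_{B_n^!}(B_n^!\underset{C_n^!}{\otimes}M)\cong B_n^!\underset{C_n^!}{\otimes}\Omega_{C_n^!}(M)$ together with the adjunction $\mathrm{Hom}_{B_n^!}(B_n^!\underset{C_n^!}{\otimes}-,\Bbbk)\cong\mathrm{Hom}_{C_n^!}(-,\Bbbk)$). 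Since that construction is a composite of isomorphisms, $\psi_1$ is bijective; hence $\rho=\psi_1^{-1}\circ\mathrm{res}$, and it suffices to prove that $\mathrm{res}$ is surjective.

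To handle $\mathrm{res}$ I would use that $B_n^!$ and $C_n^!$ are positively graded, generated in degree one, with $(B_n^!)_0=(C_n^!)_0=\Bbbk$. For such an algebra $\Lambda$ a (necessarily degree-one) self-extension $0\to\Bbbk\to E\to\Bbbk\to 0$ is encoded by the action of $\Lambda_1$ on the one-dimensional top of $E$, yielding a canonical isomorphism $\mathrm{Ext}^1_{\Lambda}(\Bbbk,\Bbbk)\cong(\Lambda_1)^{\ast}$ that is natural for graded algebra maps. Under the identifications $\mathrm{Ext}^1_{B_n^!}(\Bbbk,\Bbbk)\cong((B_n^!)_1)^{\ast}$ and $\mathrm{Ext}^1_{C_n^!}(\Bbbk,\Bbbk)\cong((C_n^!)_1)^{\ast}$, the map $\mathrm{res}$ becomes the $\Bbbk$-dual of the inclusion $(C_n^!)_1\hookrightarrow(B_n^!)_1$. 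By the decomposition recalled in Section~1, $(B_n^!)_1=(C_n^!)_1\oplus\Bbbk Z$, so this inclusion is a split monomorphism of finite-dimensional vector spaces and its transpose $\mathrm{res}$ is a split epimorphism. Therefore $\rho=\psi_1^{-1}\circ\mathrm{res}$ is surjective, as required.

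The step needing the most care is the naturality bookkeeping that identifies $\mathrm{res}$ with the transpose of the generator inclusion, and confirming that $\psi_1$ really is bijective for $M=\Bbbk$ (here one uses that $\Bbbk$ is Koszul over $C_n^!$, which holds because $C_n^!$ is a Koszul algebra). If one prefers to sidestep the functoriality argument, the same conclusion follows from a dimension count: $\dim_{\Bbbk}\mathrm{Ext}^1_{B_n^!}(\Bbbk,\Bbbk)=n+1$, while $\dim_{\Bbbk}\mathrm{Ext}^1_{B_n^!}(B_n^!\underset{C_n^!}{\otimes}\Bbbk,\Bbbk)=\dim_{\Bbbk}\mathrm{Ext}^1_{C_n^!}(\Bbbk,\Bbbk)=n$, combined with the explicit pull-back description $\rho(x)=x\mu$ to see that the restrictions of the classes corresponding to $X_1,\dots,X_n$ already exhaust the target; the transpose-of-inclusion viewpoint simply packages this uniformly.
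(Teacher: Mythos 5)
Your argument is correct, but it is not the route the paper takes. You reduce the statement to surjectivity of $\mathrm{res}$ in degree one: from the previous lemma you quote $\psi _{1}\rho =\mathrm{res}$, you observe that $\psi _{1}$ is bijective because it is the $M=\Bbbk $ instance of the isomorphism $\mathrm{Ext}_{B_{n}^{!}}^{m}(B_{n}^{!}\underset{C_{n}^{!}}{\otimes }M,\Bbbk )\cong \mathrm{Ext}_{C_{n}^{!}}^{m}(M,\Bbbk )$ established earlier (valid since $\Bbbk $ is Koszul over $C_{n}^{!}$), and then you prove $\mathrm{res}$ surjective directly by identifying $\mathrm{Ext}_{\Lambda }^{1}(\Bbbk ,\Bbbk )$ with $(\Lambda _{1})^{\ast }$ for a graded algebra generated in degree one and using the splitting $(B_{n}^{!})_{1}=(C_{n}^{!})_{1}\oplus \Bbbk Z$. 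The paper instead works with the extensions themselves: given $y:0\rightarrow \Bbbk \lbrack -1]\rightarrow E\rightarrow B_{n}^{!}\underset{C_{n}^{!}}{\otimes }\Bbbk \rightarrow 0$, it pulls back along the kernel $u:\Bbbk \lbrack -1]\rightarrow B_{n}^{!}\underset{C_{n}^{!}}{\otimes }\Bbbk $ of $\mu $; the resulting self-extension of $\Bbbk \lbrack -1]$ splits because both ends are generated in the same degree, yielding a lifting $v:\Bbbk \lbrack -1]\rightarrow E$ of $u$ and hence an extension $x$ of $\Bbbk $ by $\Bbbk \lbrack -1]$ with $\rho (x)=x\mu =y$, so a preimage is exhibited explicitly. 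Your approach buys a cleaner linear-algebra proof, at the cost of leaning on the earlier induced-module isomorphism; note also that you are in effect proving the degree-one case of the corollary that follows this lemma (surjectivity of $\mathrm{res}$) first and reversing the paper's direction of deduction — this is not circular, since your computation of $\mathrm{res}$ in degree one is independent of $\rho $, but you should say so explicitly, and you should keep track of the internal degree shifts ($\Bbbk \lbrack -1]$ versus $\Bbbk $) that the paper carries along, which your transpose-of-inclusion bookkeeping silently absorbs.
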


\begin{proof}
Since $B_{n}^{!}$=$C_{n}^{!}\oplus $Z$C_{n}^{!}$ , $B_{n}^{!}\underset{%
C_{n}^{!}}{\otimes }\Bbbk $ is a graded vector space of dimension two with
one copy of $\Bbbk $ in degree zero and one copy of $\Bbbk $ in degree one.
Hence the multiplication map $\mu $:$B_{n}^{!}\underset{C_{n}^{!}}{\otimes }%
\Bbbk \rightarrow \Bbbk $ is an epimorphsi with kernel $u$:$\Bbbk $[-1]$%
\rightarrow $ $B_{n}^{!}\underset{C_{n}^{!}}{\otimes }\Bbbk .$

Let y:0$\rightarrow \Bbbk $[-1]$\rightarrow E\rightarrow B_{n}^{!}\underset{%
C_{n}^{!}}{\otimes }\Bbbk \rightarrow 0$ be an element of Ext$%
_{B_{n}^{!}}^{1}$($\Bbbk $,B$_{n}^{!}\underset{C_{n}^{!}}{\otimes }\Bbbk $)
and take the pullback:

$%
\begin{array}{ccccccc}
0\rightarrow & \Bbbk \text{[-1]} & \rightarrow & N & \rightarrow & \Bbbk 
\text{[-1]} & \rightarrow 0 \\ 
& \downarrow \text{1} &  & \downarrow &  & \downarrow u &  \\ 
0\rightarrow & \Bbbk \text{[-1]} & \rightarrow & E & \rightarrow & B_{n}^{!}%
\underset{C_{n}^{!}}{\otimes }\Bbbk & \rightarrow 0%
\end{array}%
$

But the top exact sequence split because the ends are generated in the same
degree and the algebra is Koszul or equivalently there is a lifting $v$:$%
\Bbbk $[-1]$\rightarrow E$ of $u$ and we get a commutative exact diagram:

$%
\begin{array}{ccccccc}
& 0 & \rightarrow & \Bbbk \text{[-1]} & \rightarrow & \Bbbk \text{[-1]} & 
\rightarrow 0 \\ 
& \downarrow &  & \downarrow v &  & \downarrow u &  \\ 
0\rightarrow & \Bbbk \text{[-1]} & \rightarrow & E & \rightarrow & B_{n}^{!}%
\underset{C_{n}^{!}}{\otimes }\Bbbk & \rightarrow 0 \\ 
& \downarrow \text{1} &  & \downarrow &  & \downarrow \mu &  \\ 
0\rightarrow & \Bbbk \text{[-1]} & \rightarrow & L & \rightarrow & \Bbbk & 
\rightarrow 0%
\end{array}%
$

Proving $\rho $ is surjective.
\end{proof}

\begin{corollary}
The map res:$\underset{m\geq 0}{\oplus }$Ext$_{B_{n}^{!}}^{m}$($\Bbbk $,$%
\Bbbk $)$\rightarrow \underset{m\geq 0}{\oplus }$Ext$_{C_{n}^{!}}^{m}$($%
\Bbbk $,$\Bbbk $) is a surjective homomorphism of algebras and the kernel of
res is the ideal Z$B_{n}=B_{n}$Z.
\end{corollary}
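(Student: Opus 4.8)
The plan is to reduce the whole statement to its degree-one component, where the lemmas just established already contain everything needed, and then to spread the conclusion to all degrees using the algebra structure. To set up: recall from the paragraph preceding the statement that $\mathrm{res}$ is a homomorphism of graded $\Bbbk$-algebras, and recall the Koszul-dual identifications $B_{n}\cong\bigoplus_{m\geq 0}\mathrm{Ext}^{m}_{B_{n}^{!}}(\Bbbk,\Bbbk)$ and $C_{n}\cong\bigoplus_{m\geq 0}\mathrm{Ext}^{m}_{C_{n}^{!}}(\Bbbk,\Bbbk)$, under which $\mathrm{res}$ becomes a graded algebra map $B_{n}\to C_{n}$ that is the identity in degree zero. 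Since $B_{n}$ and $C_{n}$ are Koszul they are quadratic, hence generated in degree one, so it is enough to analyse $\mathrm{res}$ in degree one.

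For surjectivity I would invoke the last lemmas before the statement: they give the factorization $\mathrm{res}=\psi_{1}\rho$ in cohomological degree one, show that $\psi_{1}:\mathrm{Ext}^{1}_{B_{n}^{!}}(B_{n}^{!}\underset{C_{n}^{!}}{\otimes}\Bbbk,\Bbbk)\to\mathrm{Ext}^{1}_{C_{n}^{!}}(\Bbbk,\Bbbk)$ is an isomorphism, and show that $\rho:\mathrm{Ext}^{1}_{B_{n}^{!}}(\Bbbk,\Bbbk)\to\mathrm{Ext}^{1}_{B_{n}^{!}}(B_{n}^{!}\underset{C_{n}^{!}}{\otimes}\Bbbk,\Bbbk)$ is surjective. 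Hence $\mathrm{res}$ is surjective in degree one; being an algebra homomorphism onto an algebra generated in degree one, $\mathrm{res}$ is surjective.

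For the kernel I would first determine $\ker\mathrm{res}$ in degree one. The map $\mathrm{res}_{1}:\mathrm{Ext}^{1}_{B_{n}^{!}}(\Bbbk,\Bbbk)\to\mathrm{Ext}^{1}_{C_{n}^{!}}(\Bbbk,\Bbbk)$ is dual to the map $J_{C_{n}^{!}}/J_{C_{n}^{!}}^{2}\to J_{B_{n}^{!}}/J_{B_{n}^{!}}^{2}$ induced by the inclusion $C_{n}^{!}\hookrightarrow B_{n}^{!}$, and since $J_{B_{n}^{!}}=J_{C_{n}^{!}}+ZC_{n}^{!}$ this is the inclusion of the span of $\bar X_{1},\dots,\bar X_{n}$ into the span of $\bar X_{1},\dots,\bar X_{n},\bar Z$. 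Its dual is therefore surjective with one-dimensional kernel, the functional dual to $\bar Z$, which under $\mathrm{Ext}^{1}_{B_{n}^{!}}(\Bbbk,\Bbbk)=(B_{n})_{1}$ is precisely the generator $Z$. Thus $Z\in\ker\mathrm{res}$, and since $\ker\mathrm{res}$ is a graded ideal and $Z$ is central in $B_{n}$, we get $B_{n}Z=ZB_{n}\subseteq\ker\mathrm{res}$. For the opposite inclusion, $\mathrm{res}$ now factors through $B_{n}/ZB_{n}\cong C_{n}$, giving a surjective graded algebra homomorphism $\overline{\mathrm{res}}:C_{n}\to C_{n}$; as $C_{n}$ has finite-dimensional graded components, a surjective degree-preserving endomorphism is bijective in each degree, so $\overline{\mathrm{res}}$ is an isomorphism and $\ker\mathrm{res}=ZB_{n}$.

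The step I expect to be the real obstacle is the degree-one identification of the kernel class as $Z$ — equivalently, checking that the self-extension of $\Bbbk$ attached to the generator $Z$ of $B_{n}^{!}$ splits after restriction of scalars to $C_{n}^{!}$ — together with being precise about the Koszul-dual dictionary matching generators of $B_{n}^{!}$ with degree-one elements of $B_{n}$. Everything else, namely surjectivity and the passage from the degree-one picture to $\ker\mathrm{res}=ZB_{n}$ in all degrees, is then formal.
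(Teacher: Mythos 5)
Your argument is correct, and for the surjectivity half it coincides with the paper's: degree-one surjectivity from the factorization $\mathrm{res}=\psi\rho$ with $\rho$ surjective and $\psi$ an isomorphism, then generation in degree one. Where you genuinely diverge is in proving $Z\in\ker(\mathrm{res})$. The paper does this by a ``universal'' trick: it takes the relation $X_{1}\delta_{1}-\delta_{1}X_{1}=Z^{2}$ in $B_{n}$, uses commutativity of $C_{n}$ to get $f(Z)^{2}=0$ for \emph{any} algebra homomorphism $f:B_{n}\rightarrow C_{n}$, and then uses that $C_{n}$ is a domain to conclude $f(Z)=0$. You instead compute $\mathrm{res}$ in degree one directly, identifying $\mathrm{res}_{1}$ as the dual of the inclusion $J_{C_{n}^{!}}/J_{C_{n}^{!}}^{2}\hookrightarrow J_{B_{n}^{!}}/J_{B_{n}^{!}}^{2}$, i.e.\ of $(C_{n}^{!})_{1}\subset (B_{n}^{!})_{1}=(C_{n}^{!})_{1}\oplus \Bbbk Z$, whose dual has one-dimensional kernel spanned (under the Koszul identification $\mathrm{Ext}^{1}_{B_{n}^{!}}(\Bbbk,\Bbbk)\cong (B_{n})_{1}$) by $Z$; this is fine, since $J_{B_{n}^{!}}/J_{B_{n}^{!}}^{2}=(B_{n}^{!})_{1}$ for a quadratic algebra and the extension-class/functional dictionary is natural under restriction of scalars. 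Your route buys two things: it does not depend on $C_{n}$ being commutative or a domain, nor on the presence of a relation with nonzero constant term $Z^{2}$ (the paper's computation is really specific to the homogenized Weyl algebra), and it also identifies the degree-one kernel exactly rather than only exhibiting $Z$ in it; the paper's route buys the stronger observation that \emph{every} homomorphism $B_{n}\rightarrow C_{n}$ kills $Z$ in that setting. The closing steps — $ZB_{n}=B_{n}Z\subseteq\ker(\mathrm{res})$ because $Z$ is central and the kernel is an ideal, then factoring through $B_{n}/ZB_{n}\cong C_{n}$ and concluding by finite dimensionality of the graded components — are the same dimension argument the paper uses for its map $\alpha$.
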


\begin{proof}
Since both $B_{n}^{!}$ and $C_{n}^{!}$ are Koszul algebras they are graded
algebras generated in degree one, and it follows from the lemma that for any
m$>$0 the map res:Ext$_{B_{n}^{!}}^{m}$($\Bbbk $,$\Bbbk $)$\rightarrow $Ext$%
_{C_{n}^{!}}^{m}$($\Bbbk $,$\Bbbk $) is surjective.

Observe that for any homomorphism f:$B_{n}\rightarrow C_{n}$ Z is in the
kernel.

We have in $B_{n}$ the equality X$_{1}\delta _{1}$-$\delta _{1}$X$_{1}$=Z$%
^{2}$. Since $C_{n}$ is commutative, f(X$_{1}\delta _{1}$-$\delta _{1}$X$%
_{1} $)=f(X$_{1}$)f($\delta _{1}$)-f($\delta _{1}$)f(X$_{1}$)=f(Z)$^{2}$=0,

But since $C_{n}$ is an integral domain, it follows f(Z)=0.

In particular Z$B_{n}\subseteq $Ker(res) and there is a factorization:

$%
\begin{array}{ccc}
B_{n} & \rightarrow & C_{n} \\ 
\searrow &  & \nearrow \alpha \\ 
& B_{n}\text{/Z}B_{n} & 
\end{array}%
$

and since $B_{n}$/Z$B_{n}\cong C_{n}$ it follows by dimension, that $\alpha $
is an isomorphism.
\end{proof}

\begin{lemma}
With the same notation as in the previous lemma, let $M$ be a Koszul $%
C_{n}^{!}$-module and $\psi :G_{B_{n}^{!}}$($B_{n}^{!}\underset{C_{n}^{!}}{%
\otimes }M$)$\rightarrow G_{C_{n}^{!}}$($M$), the isomorphism in the
previous lemma.

Then given y$\in $Ext$_{B_{n}^{!}}^{m}$($B_{n}^{!}\underset{C_{n}^{!}}{%
\otimes }M$,$\Bbbk $) and c$\in $Ext$_{B_{n}^{!}}^{1}$($\Bbbk $,$\Bbbk $),
we have $\psi $(cy)=res(c)$\psi $(y).
\end{lemma}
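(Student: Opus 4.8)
The plan is to reduce the identity to the pullback (splicing) description of the Yoneda product, after first recording the explicit shape of $\psi$. Recall from the proof of the previous lemma that, since $B_n^!$ is free over $C_n^!$, the functor $B_n^!\otimes_{C_n^!}-$ is exact and carries a minimal $C_n^!$-projective resolution of $M$ to a minimal $B_n^!$-projective resolution of $B_n^!\otimes_{C_n^!}M$; in particular $\Omega_{B_n^!}^{k}(B_n^!\otimes_{C_n^!}M)\cong B_n^!\otimes_{C_n^!}\Omega_{C_n^!}^{k}(M)$ for every $k$ by (iterating) the earlier syzygy lemma, and in homological degree $k$ the map $\psi$ is the composite
\[
\mathrm{Ext}_{B_n^!}^{k}(B_n^!\otimes_{C_n^!}M,\Bbbk)\ \cong\ \mathrm{Hom}_{B_n^!}\!\big(B_n^!\otimes_{C_n^!}\Omega_{C_n^!}^{k}(M),\Bbbk\big)\ \cong\ \mathrm{Hom}_{C_n^!}\!\big(\Omega_{C_n^!}^{k}(M),\Bbbk\big)\ \cong\ \mathrm{Ext}_{C_n^!}^{k}(M,\Bbbk),
\]
the third arrow being $f\mapsto f\circ j$ with $j\colon\Omega_{C_n^!}^{k}(M)\to B_n^!\otimes_{C_n^!}\Omega_{C_n^!}^{k}(M)$, $j(x)=1\otimes x$ (this is the isomorphism $\alpha$ used already to prove $\psi\rho=\mathrm{res}$, now applied in every degree). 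So $\psi$ is nothing but the change-of-rings isomorphism $\mathrm{Ext}_{B_n^!}^{*}(B_n^!\otimes_{C_n^!}N,\Bbbk)\cong\mathrm{Ext}_{C_n^!}^{*}(N,\Bbbk)$ attached to $C_n^!\subset B_n^!$.

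First I would fix representatives. Choose $\bar y\colon\Omega_{B_n^!}^{m}(B_n^!\otimes_{C_n^!}M)=B_n^!\otimes_{C_n^!}\Omega_{C_n^!}^{m}(M)\to\Bbbk$ for $y$, so that $\psi(y)$ is represented by the $C_n^!$-linear map $\bar y\circ j$; and write $c$ as an extension $x_c\colon 0\to\Bbbk\to E\to\Bbbk\to 0$ of $B_n^!$-modules, so that, by definition, $\mathrm{res}(c)$ is the class of $\mathrm{res}(x_c)\colon 0\to\Bbbk\to\mathrm{res}(E)\to\Bbbk\to 0$. Using the dimension shift $\mathrm{Ext}_{B_n^!}^{m+1}(B_n^!\otimes_{C_n^!}M,\Bbbk)\cong\mathrm{Ext}_{B_n^!}^{1}(\Omega_{B_n^!}^{m}(B_n^!\otimes_{C_n^!}M),\Bbbk)$ together with the pullback description of the Yoneda product used elsewhere in the paper, the class $cy$ corresponds to the pullback $\bar y^{*}(x_c)$. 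Since $\psi$ is the change-of-rings map, $\psi(cy)$ then corresponds to $j^{*}\big(\mathrm{res}(\bar y^{*}x_c)\big)$: restrict scalars to $C_n^!$, then pull back along $j$.

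For the last step I would compute this directly. The functor $\mathrm{res}$ is exact and preserves pullbacks, and $j$ is natural, so $\mathrm{res}(\bar y^{*}x_c)\cong\big(\mathrm{res}(\bar y)\big)^{*}\big(\mathrm{res}(x_c)\big)$ and hence
\[
j^{*}\big(\mathrm{res}(\bar y^{*}x_c)\big)\ \cong\ \big(\mathrm{res}(\bar y)\circ j\big)^{*}\big(\mathrm{res}(c)\big)\ =\ (\bar y\circ j)^{*}\big(\mathrm{res}(c)\big),
\]
where I used that $\mathrm{res}(\bar y)\circ j$ is precisely the map $\bar y\circ j$ representing $\psi(y)$. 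By the same pullback description of the Yoneda product, now over $C_n^!$, the class $(\bar y\circ j)^{*}(\mathrm{res}(c))$ in $\mathrm{Ext}_{C_n^!}^{1}(\Omega_{C_n^!}^{m}(M),\Bbbk)\cong\mathrm{Ext}_{C_n^!}^{m+1}(M,\Bbbk)$ is exactly $\mathrm{res}(c)\,\psi(y)$; therefore $\psi(cy)=\mathrm{res}(c)\,\psi(y)$, and since both sides are additive in $c$ and $\mathrm{Ext}_{B_n^!}^{1}(\Bbbk,\Bbbk)$ is spanned by such $c$, this proves the lemma. The step I expect to be the main obstacle is the first one: one must verify that the isomorphism $\psi$ of the previous lemma genuinely coincides, in every homological degree, with the change-of-rings map above, i.e. that the syzygy isomorphisms $\Omega_{B_n^!}^{k}(B_n^!\otimes_{C_n^!}M)\cong B_n^!\otimes_{C_n^!}\Omega_{C_n^!}^{k}(M)$ are compatible with the unit maps $j$ and with the dimension shifts. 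This is a diagram chase entirely parallel to those already carried out in the two preceding lemmas, so I do not expect any genuinely new difficulty.
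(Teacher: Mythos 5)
Your proof is correct and follows essentially the paper's own route: the paper likewise identifies $\psi$ in every homological degree with the adjunction map $f\mapsto f\circ j$ on syzygy-level cocycles (via $\Omega_{B_n^!}^{k}(B_n^!\otimes_{C_n^!}M)\cong B_n^!\otimes_{C_n^!}\Omega_{C_n^!}^{k}(M)$), represents $c$ by an extension, and checks the compatibility by explicit pullback/lifting diagrams, with the identity $\Omega(fj)=\Omega(f)j$ and the splitting $B_n^!=C_n^!\oplus C_n^!Z$ playing the role of your ``restriction is exact, preserves pullbacks, and is natural in $j$'' step. The diagram chase you defer to at the end is precisely what the paper writes out, so there is no gap beyond that shared verification.
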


\begin{proof}
The map f:$B_{n}^{!}\underset{C_{n}^{!}}{\otimes }\Omega ^{m}(M)\rightarrow
\Bbbk $ corresponding to the extension $y$ is the map in the commutative
exact diagram:\newline
$%
\begin{array}{ccccccccc}
\text{0}\rightarrow  & B_{n}^{!}\underset{C_{n}^{!}}{\otimes }\Omega ^{m}%
\text{(M)} & \rightarrow  & B_{n}^{!}\underset{C_{n}^{!}}{\otimes }\text{C}%
_{n}^{!k_{m-1}} & \text{...}\rightarrow  & B_{n}^{!}\underset{C_{n}^{!}}{%
\otimes }\text{C}_{n}^{!k_{0}} & \rightarrow  & B_{n}^{!}\underset{C_{n}^{!}}%
{\otimes }\text{M} & \rightarrow \text{0} \\ 
& \downarrow \text{f} &  & \downarrow  &  & \downarrow  &  & \downarrow 
\text{1} &  \\ 
\text{0}\rightarrow  & \Bbbk  & \rightarrow  & E_{1} & \text{...}\rightarrow 
& E_{m} & \rightarrow  & B_{n}^{!}\underset{C_{n}^{!}}{\otimes }\text{M} & 
\rightarrow \text{0}%
\end{array}%
$

where y is the bottom raw.

If j:$\Omega ^{m}$($M$)$\rightarrow B_{n}^{!}\underset{C_{n}^{!}}{\otimes }%
\Omega ^{m}$($M$) is the map j(m)=1$\otimes $m, then $\psi $(y) is the
extension corresponding to the map fj.

Consider the commutative diagram with exact rows:\newline
$%
\begin{array}{ccccccc}
0\rightarrow  & \Omega ^{m+1}\text{(}M\text{)} & \rightarrow  & 
C_{n}^{!k_{m}} & \rightarrow  & \Omega ^{m}\text{(}M\text{)} & \rightarrow 0
\\ 
& \text{j}\downarrow  &  & \text{j}\downarrow  &  & \text{j}\downarrow  & 
\\ 
0\rightarrow  & B_{n}^{!}\underset{C_{n}^{!}}{\otimes }B\Omega ^{m+1}\text{(}%
M\text{)} & \rightarrow  & B_{n}^{!}\underset{C_{n}^{!}}{\otimes }%
C_{n}^{!k_{m}} & \rightarrow  & B_{n}^{!}\underset{C_{n}^{!}}{\otimes }%
\Omega ^{m}\text{(}M\text{)} & \rightarrow 0 \\ 
& \Omega \text{f}\downarrow  &  & \downarrow  &  & \text{f}\downarrow  &  \\ 
0\rightarrow  & JB_{n}^{!} & \rightarrow  & B_{n}^{!} & \rightarrow  & \Bbbk 
& \rightarrow 0 \\ 
& \text{g}\downarrow  &  & \downarrow  &  & \text{1}\downarrow  &  \\ 
0\rightarrow  & \Bbbk  & \rightarrow  & L & \rightarrow  & \Bbbk  & 
\rightarrow 0%
\end{array}%
$

where c is the bottom sequence.

Since $B_{n}^{!}$=$C_{n}^{!}\oplus C_{n}^{!}$Z as $C_{n}^{!}$-module, the
map g restricted to $C_{n}^{!}$ represents the extension res(c).

Taking the pullback we obtain a commutative diagram with exact rows:

$%
\begin{array}{ccccccc}
0\rightarrow & \Omega ^{m+1}\text{(}M\text{)} & \rightarrow & C_{n}^{!k_{m}}
& \rightarrow & \Omega ^{m}\text{(}M\text{)} & \rightarrow 0 \\ 
& \text{g}\Omega \text{(f)j}\downarrow &  & \downarrow &  & \text{1}%
\downarrow &  \\ 
0\rightarrow & \Bbbk & \rightarrow & W & \rightarrow & \Omega ^{m}\text{(}M%
\text{)} & \rightarrow 0 \\ 
& \text{1}\downarrow &  & \downarrow &  & \text{fj}\downarrow &  \\ 
0\rightarrow & \Bbbk & \rightarrow & L & \rightarrow & \Bbbk & \rightarrow 0%
\end{array}%
$

and $\Omega $(fj)=$\Omega $(f)j.

It follows $\psi $(cy)=res(c)$\psi $(y).
\end{proof}

As a corollary we obtain the following:

\begin{proposition}
Let $M$ be a Koszul $C_{n}^{!}$-module and $G_{B_{n}^{!}}=\underset{m\geq 0}{%
\oplus }$Ext$_{C_{n}^{!}}^{m}$(-,$\Bbbk $) Koszul duality. Then Z($%
G_{B_{n}^{!}}$($B_{n}^{!}\underset{C_{n}^{!}}{\otimes }M$))=0.
\end{proposition}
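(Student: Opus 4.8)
The plan is to read the statement off directly from the two lemmas just proved together with the Corollary that computes $\ker(\mathrm{res})$; no new construction is needed. Recall that
$G_{B_{n}^{!}}(B_{n}^{!}\underset{C_{n}^{!}}{\otimes }M)=\underset{m\geq 0}{\oplus }\mathrm{Ext}_{B_{n}^{!}}^{m}(B_{n}^{!}\underset{C_{n}^{!}}{\otimes }M,\Bbbk )$ carries a graded left $B_{n}$-module structure, where $B_{n}\cong \underset{m\geq 0}{\oplus }\mathrm{Ext}_{B_{n}^{!}}^{m}(\Bbbk ,\Bbbk )$, the action being the Yoneda (pull back) product; under this identification $Z$ is the distinguished homogeneous element of degree one, so $Z\in \mathrm{Ext}_{B_{n}^{!}}^{1}(\Bbbk ,\Bbbk )$.

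Let $\psi :G_{B_{n}^{!}}(B_{n}^{!}\underset{C_{n}^{!}}{\otimes }M)\rightarrow G_{C_{n}^{!}}(M)$ be the isomorphism produced in the preceding lemma, and fix an arbitrary homogeneous element $y\in \mathrm{Ext}_{B_{n}^{!}}^{m}(B_{n}^{!}\underset{C_{n}^{!}}{\otimes }M,\Bbbk )$. First I would apply that lemma with $c=Z$, obtaining $\psi (Zy)=\mathrm{res}(Z)\psi (y)$. Next I would invoke the Corollary: since $\ker(\mathrm{res})=ZB_{n}=B_{n}Z$ and $Z=Z\cdot 1\in ZB_{n}$, we get $\mathrm{res}(Z)=0$, hence $\psi (Zy)=0$. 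As $\psi $ is an isomorphism this forces $Zy=0$. Since $y$ and $m$ were arbitrary, $Z\cdot G_{B_{n}^{!}}(B_{n}^{!}\underset{C_{n}^{!}}{\otimes }M)=0$, which is precisely the assertion; equivalently $G_{B_{n}^{!}}(B_{n}^{!}\underset{C_{n}^{!}}{\otimes }M)$ is a module over $C_{n}=B_{n}/ZB_{n}$.

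There is essentially no obstacle remaining: the substantive content has already been extracted in establishing the multiplicativity formula $\psi (cy)=\mathrm{res}(c)\psi (y)$ and the identity $\ker(\mathrm{res})=ZB_{n}$. The only small point to be careful about is that $Z$ lives in degree one, so that the preceding lemma, which is stated for $c\in \mathrm{Ext}_{B_{n}^{!}}^{1}(\Bbbk ,\Bbbk )$, applies to it verbatim, with no need to decompose $Z$ as a sum of products of degree-one classes.
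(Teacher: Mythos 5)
Your proposal is correct and follows the paper's own argument essentially verbatim: apply the lemma $\psi(zy)=\mathrm{res}(z)\psi(y)$ with $z$ the degree-one class corresponding to $Z$, use the corollary that $\mathrm{Ker}(\mathrm{res})=ZB_{n}$ to get $\mathrm{res}(z)=0$, and conclude $zy=0$ from $\psi$ being an isomorphism. No differences worth noting.
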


\begin{proof}
Denote by z the extension corresponding to Z under the isomorphism $%
B_{n}\cong $ $\underset{m\geq 0}{\oplus }$Ext$_{B_{n}^{!}}^{m}$($\Bbbk $,$%
\Bbbk $). By the previous lemma, for any extension y$\in $Ext$%
_{B_{n}^{!}}^{m}$(B$_{n}^{!}\underset{C_{n}^{!}}{\otimes }$M),$\Bbbk $), $%
\psi $(zy)=res(z)$\psi $(y) and by lemma 10, res(z)=0. Since $\psi $ is an
isomorphism, it follows zy=0, hence Z(G$_{B_{n}^{!}}$(B$_{n}^{!}\underset{%
C_{n}^{!}}{\otimes }$M))=0.
\end{proof}

\begin{proposition}
Let $B_{n}^{!}$ and $C_{n}^{!}$ be the algebras given above. Then for any
induced module $B_{n}^{!}\underset{C_{n}^{!}}{\otimes }M$ when we apply the
duality $\overline{\phi }$ to $B_{n}^{!}\underset{C_{n}^{!}}{\otimes }M$ we
obtain an element of $\mathcal{T}$.
\end{proposition}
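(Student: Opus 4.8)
The plan is to prove the statement first for $M$ a Koszul $C_n^!$-module, where it drops out of the two preceding results, and then to reduce the general case to this one by passing to syzygies. So assume $M$ is Koszul and set $N=B_n^!\otimes_{C_n^!}M$. By the lemma asserting that the induced module $B_n^!\otimes_{C_n^!}M$ is Koszul with $G_{B_n^!}(B_n^!\otimes_{C_n^!}M)\cong G_{C_n^!}(M)$, the module $N$ is a Koszul $B_n^!$-module, so by [MS] we have $H^i(\phi(N))=0$ for every $i\neq 0$ and $H^0(\phi(N))\cong G_{B_n^!}(N)$. By the Proposition stating $Z\bigl(G_{B_n^!}(B_n^!\otimes_{C_n^!}M)\bigr)=0$, this unique nonzero homology module is annihilated by $Z$, hence is of $Z$-torsion. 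Since $\pi$ is exact, $H^i(\pi\phi(N))\cong\pi H^i(\phi(N))$ for all $i$, so every homology object of $\overline\phi(N)=\pi\phi(N)$ is of $Z$-torsion; by the description $\mathcal{T}=\{\pi M^{\circ}\mid M^{\circ}\in D^b(\mathrm{gr}_{B_n}),\ H^i(M^{\circ})\text{ is of }Z\text{-torsion for all }i\}$ this says exactly that $\overline\phi(B_n^!\otimes_{C_n^!}M)\in\mathcal{T}$.

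For an arbitrary finitely generated graded $C_n^!$-module $M$, I would first note that $B_n^!$ is free over $C_n^!$, so the induction functor $B_n^!\otimes_{C_n^!}(-)$ is exact; iterating the lemma $\Omega_{B_n^!}(B_n^!\otimes_{C_n^!}M)\cong B_n^!\otimes_{C_n^!}\Omega_{C_n^!}(M)$ then gives $\Omega^k(B_n^!\otimes_{C_n^!}M)\cong B_n^!\otimes_{C_n^!}\Omega_{C_n^!}^k M$ for every $k$. Since $\Omega$ is the suspension of $\underline{\mathrm{gr}}_{B_n^!}$ and $\overline\phi$ is a duality, $\overline\phi(B_n^!\otimes_{C_n^!}\Omega_{C_n^!}^k M)\cong\overline\phi(B_n^!\otimes_{C_n^!}M)[k]$, and $\mathcal{T}$, being triangulated, is shift invariant; so it suffices to treat a single suitable $k$. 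As $C_n^!$ is finite dimensional, by [MZ] we may choose $k$ with $\Omega_{C_n^!}^k M$ weakly Koszul, and then it carries a finite filtration $\Omega_{C_n^!}^k M=U_p\supset U_{p-1}\supset\dots\supset U_0$ in which $U_0$ and every subquotient $U_i/U_{i-1}$ is Koszul. Inducing up, each sequence $0\to U_{i-1}\to U_i\to U_i/U_{i-1}\to 0$ becomes a short exact sequence of $B_n^!$-modules, hence a triangle in $\underline{\mathrm{gr}}_{B_n^!}$, which $\overline\phi$ carries to a triangle in $D^b(\mathrm{Qgr}_{B_n})$ relating $\overline\phi(B_n^!\otimes_{C_n^!}U_{i-1})$, $\overline\phi(B_n^!\otimes_{C_n^!}U_i)$ and $\overline\phi(B_n^!\otimes_{C_n^!}(U_i/U_{i-1}))$. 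The outer vertices lie in $\mathcal{T}$ by the Koszul case, so by thickness of $\mathcal{T}$ an induction on $p$, starting from $\overline\phi(B_n^!\otimes_{C_n^!}U_0)\in\mathcal{T}$, yields $\overline\phi(B_n^!\otimes_{C_n^!}\Omega_{C_n^!}^k M)\in\mathcal{T}$, completing the argument.

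The delicate part is the reduction step: one has to make sure that the two notions of syzygy are compatible --- that the induction of a $C_n^!$-syzygy is a $B_n^!$-syzygy, which is precisely the quoted lemma --- and that the weakly-Koszul filtration is carried by $\overline\phi$ to genuine triangles whose outer vertices are the $\overline\phi$-images of induced Koszul modules, so that the thick (``epasse'') structure of $\mathcal{T}$ closes the induction. Once this bookkeeping is in place the substance of the proof is the already-established identity $Z\cdot G_{B_n^!}(B_n^!\otimes_{C_n^!}M)=0$, which is what actually forces induced modules into the $Z$-torsion kernel $\mathcal{T}$; if $M$ is Koszul from the outset, no reduction is needed and the statement is immediate.
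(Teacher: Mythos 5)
Your argument is correct and is essentially the paper's own proof: the same reduction using $\Omega_{B_{n}^{!}}(B_{n}^{!}\otimes_{C_{n}^{!}}M)\cong B_{n}^{!}\otimes_{C_{n}^{!}}\Omega_{C_{n}^{!}}(M)$ together with shift invariance of $\mathcal{T}$, the same weakly Koszul filtration from [MZ] with Koszul quotients, and the same key input that $Z\cdot G_{B_{n}^{!}}(B_{n}^{!}\otimes_{C_{n}^{!}}M)=0$ forces the induced Koszul pieces into $\mathcal{T}$. The only cosmetic difference is in the bookkeeping of the induction along the filtration: the paper tracks homology through the long exact sequences attached to $0\rightarrow\phi(B_{n}^{!}\otimes_{C_{n}^{!}}(U_{i}/U_{i-1}))\rightarrow\phi(B_{n}^{!}\otimes_{C_{n}^{!}}U_{i})\rightarrow\phi(B_{n}^{!}\otimes_{C_{n}^{!}}U_{i-1})\rightarrow 0$, whereas you pass to the corresponding triangles in the stable category and use that $\mathcal{T}$ is thick.
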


\begin{proof}
There exists some integer n$\geq $0 such that $\Omega ^{n}M$ and $\Omega
^{n} $($B_{n}^{!}\underset{C_{n}^{!}}{\otimes }M$) are weakly Koszul. Since $%
\overline{\phi }$($\Omega ^{n}$($B_{n}^{!}\underset{C_{n}^{!}}{\otimes }M$))$%
\cong \overline{\phi }$($B_{n}^{!}\underset{C_{n}^{!}}{\otimes }M$)[n].The
object $\overline{\phi }$($\Omega ^{n}$($B_{n}^{!}\underset{C_{n}^{!}}{%
\otimes }M$)) is in $\mathcal{T}$ if and only if $\overline{\phi }$($%
B_{n}^{!}\underset{C_{n}^{!}}{\otimes }M$) is in $\mathcal{T}$. We may
assume $M$ and $B_{n}^{!}\underset{C_{n}^{!}}{\otimes }M$ are weakly Koszul.

The module $M$ has a filtration: $M$=$U_{p}\supset U_{p-1}\supset
...U_{1}\supset U_{0}$ such that $U_{i}$/$U_{i-1}$ is Koszul, hence; $%
B_{n}^{!}\underset{C_{n}^{!}}{\otimes }M$ has a filtration: $B_{n}^{!}%
\underset{C_{n}^{!}}{\otimes }M$=$B_{n}^{!}\underset{C_{n}^{!}}{\otimes }%
U_{p}\supset B_{n}^{!}\underset{C_{n}^{!}}{\otimes }U_{p-1}\supset
...B_{n}^{!}\underset{C_{n}^{!}}{\otimes }U_{1}\supset B_{n}^{!}\underset{%
C_{n}^{!}}{\otimes }U_{0}$ such that $B_{n}^{!}\underset{C_{n}^{!}}{\otimes }%
U_{i}$/$B_{n}^{!}\underset{C_{n}^{!}}{\otimes }U_{i-1}\cong B_{n}^{!}%
\underset{C_{n}^{!}}{\otimes }$ $U_{i}$/$U_{i-1}$ is Koszul.

The exact sequence: 0$\rightarrow B_{n}^{!}\underset{C_{n}^{!}}{\otimes }%
U_{0}\rightarrow B_{n}^{!}\underset{C_{n}^{!}}{\otimes }U_{1}\rightarrow
B_{n}^{!}\underset{C_{n}^{!}}{\otimes }U_{1}$/$U_{0}\rightarrow $0 induces
an exact sequence of complexes:

0$\rightarrow \phi $($B_{n}^{!}\underset{C_{n}^{!}}{\otimes }U_{1}$/$%
U_{0})\rightarrow \phi (B_{n}^{!}\underset{C_{n}^{!}}{\otimes }U_{1}$)$%
\rightarrow \phi $($B_{n}^{!}\underset{C_{n}^{!}}{\otimes }U_{0})\rightarrow 
$0 which in turn induces a long exact sequence:\newline
...$\rightarrow $H$^{1}$($\phi $($B_{n}^{!}\underset{C_{n}^{!}}{\otimes }%
U_{1}$/$U_{0}$))$\rightarrow $H$^{1}$($\phi (B_{n}^{!}\underset{C_{n}^{!}}{%
\otimes }U_{1}$))$\rightarrow $H$^{1}$($\phi (B_{n}^{!}\underset{C_{n}^{!}}{%
\otimes }U_{0}$))$\rightarrow $H$^{0}$($\phi $($B_{n}^{!}\underset{C_{n}^{!}}%
{\otimes }U_{1}$/$U_{0}$))\linebreak $\rightarrow $H$^{0}$($\phi $($B_{n}^{!}%
\underset{C_{n}^{!}}{\otimes }U_{1}$))$\rightarrow $H$^{0}$($\phi $($%
B_{n}^{!}\underset{C_{n}^{!}}{\otimes }U_{0}$))$\rightarrow $0

where $H^{i}$($\phi $($B_{n}^{!}\underset{C_{n}^{!}}{\otimes }U_{0}$))=0 for
i$\neq $0 and H$^{0}$($\phi $($B_{n}^{!}\underset{C_{n}^{!}}{\otimes }U_{0}$%
))=$G_{B_{n}^{!}}$($B_{n}^{!}\underset{C_{n}^{!}}{\otimes }U_{0}$)$\cong
G_{C_{n}^{!}}$($U_{0}$) of Z-torsion, H$^{0}$($\phi $($B_{n}^{!}\underset{%
C_{n}^{!}}{\otimes }U_{1}$))$\cong $H$^{0}$($\phi $($B_{n}^{!}\underset{%
C_{n}^{!}}{\otimes }U_{0}$)) and H$^{i}$($\phi $( $B_{n}^{!}\underset{%
C_{n}^{!}}{\otimes }U_{1}$/$U_{0}))\cong $\linebreak H$^{i}$($\phi $($%
B_{n}^{!}\underset{C_{n}^{!}}{\otimes }U_{1}$)) for i$\neq $0. It follows H$%
^{i}$($\phi $($B_{n}^{!}\underset{C_{n}^{!}}{\otimes }U_{1}$)) is of
Z-torsion for all $i$. By induction H$^{i}$($\phi $($B_{n}^{!}\underset{%
C_{n}^{!}}{\otimes }M$)) is of Z-torsion for all i.

We have proved $\phi $($B_{n}^{!}\underset{C_{n}^{!}}{\otimes }M$)$\in 
\mathcal{T}$.
\end{proof}

\begin{lemma}
Let $M$ be a $B_{n}^{!}$-module and assume there is an integer n$\geq $0
such that $\Omega ^{n}M$ =$N$ has the following properties:

The module $N$ is weakly Koszul, it has a filtration: $N$=$U_{p}\supset
U_{p-1}\supset ...U_{1}\supset U_{0}$ such that $U_{i}$/$U_{i-1}$ is Koszul,
and for all k$\geq $0, $J^{k}U_{i}\cap U_{i-1}$=$J^{k}U_{i-1}.$

The Koszul modules $G_{B_{n}^{!}}$($U_{i}$/$U_{i-1}$)=$V_{i}$ are of
Z-torsion.

Then $\phi $($M$) is in $\mathcal{T}$.
\end{lemma}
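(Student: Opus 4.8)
The plan is to reduce to the case $M=N$ weakly Koszul and then induct on the length $p$ of the given filtration, using that the image under $\phi$ of a Koszul module has cohomology concentrated in degree $0$, equal there to its Koszul dual. First I would invoke the identity $\overline{\phi}(\Omega^{n}M)\cong\overline{\phi}(M)[n]$ recorded above: since $\mathcal{T}$ is a triangulated (thick) subcategory of $D^{b}(\mathrm{Qgr}_{B_{n}})$ it is closed under shifts and their inverses, so $\overline{\phi}(M)=\pi\phi(M)$ lies in $\mathcal{T}$ if and only if $\pi\phi(N)$ does. By the description $\mathcal{T}=\mathrm{Ker}\,D^{b}(\psi)=\{\pi M^{\circ}\mid H^{i}(M^{\circ})\ \text{is }Z\text{-torsion for all }i\}$ obtained in Section 2, and since $N$ is finitely generated over the finite dimensional algebra $B_{n}^{!}$ so that $\phi(N)$ is a bounded complex of finitely generated projective $B_{n}$-modules, it therefore suffices to show that $H^{j}(\phi(N))$ is a $B_{n}$-module of $Z$-torsion for every $j$; this will give $\pi\phi(N)\in\mathcal{T}$, hence $\pi\phi(M)\in\mathcal{T}$.

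For the induction I would use two standing facts: by [MS], a finitely generated graded $B_{n}^{!}$-module $W$ is Koszul if and only if $H^{j}(\phi(W))=0$ for all $j\neq 0$, and then $H^{0}(\phi(W))\cong G_{B_{n}^{!}}(W)$; and the $Z$-torsion modules form a Serre subcategory of $\mathrm{gr}_{B_{n}}$, being closed under submodules, quotients and extensions (immediate since $t_{Z}$ is an idempotent radical, equivalently since localization at $Z$ is exact). The base case $p=0$ is $N=U_{0}$ Koszul, whence $H^{j}(\phi(U_{0}))=0$ for $j\neq 0$ and $H^{0}(\phi(U_{0}))\cong G_{B_{n}^{!}}(U_{0})=V_{0}$, which is of $Z$-torsion by hypothesis. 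For the inductive step I would apply the exact contravariant functor $\phi$ to $0\to U_{p-1}\to N\to U_{p}/U_{p-1}\to 0$, obtaining a short exact sequence of bounded complexes $0\to\phi(U_{p}/U_{p-1})\to\phi(N)\to\phi(U_{p-1})\to 0$, and pass to its long exact cohomology sequence. The submodule $U_{p-1}$ carries the truncated filtration $0\subset U_{0}\subset\cdots\subset U_{p-1}$, which still satisfies all the hypotheses, so by induction $H^{j}(\phi(U_{p-1}))$ is of $Z$-torsion for every $j$; and $U_{p}/U_{p-1}$ being Koszul, $H^{j}(\phi(U_{p}/U_{p-1}))$ vanishes for $j\neq 0$ and is $\cong V_{p}$, of $Z$-torsion, for $j=0$. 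Thus in the long exact sequence each $H^{j}(\phi(N))$ is an extension of a subobject of $H^{j}(\phi(U_{p-1}))$ by a quotient of $H^{j}(\phi(U_{p}/U_{p-1}))$, hence is of $Z$-torsion; combined with the first paragraph this yields $\pi\phi(M)\in\mathcal{T}$, i.e. $\phi(M)\in\mathcal{T}$.

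I do not anticipate a genuinely hard step: the argument is essentially the abstract counterpart of the induction already carried out for the induced modules $B_{n}^{!}\otimes_{C_{n}^{!}}M$ in the preceding proposition. The points that need a little care are that $\phi$ is genuinely exact on $\mathrm{gr}_{B_{n}^{!}}$ (it is, being a duality onto the category of linear complexes of graded projective $B_{n}$-modules), so that the displayed short exact sequence of complexes is legitimate, and that the class of $Z$-torsion modules really is closed under extensions, which is precisely what lets the long exact cohomology sequence propagate the torsion property down the filtration; the standing hypothesis $J^{k}U_{i}\cap U_{i-1}=J^{k}U_{i-1}$ is what guarantees, via [MZ], that $N$ admits a filtration of exactly the stated form (so that the truncated filtration of $U_{p-1}$ is again of this form), but is not otherwise invoked in the argument.
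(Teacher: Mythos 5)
Your argument is correct and takes essentially the same route as the paper's proof: reduce to $N=\Omega^{n}M$ via $\overline{\phi }(\Omega ^{n}M)\cong \overline{\phi }(M)[n]$ and the thickness of $\mathcal{T}$, then induct along the filtration using exactness of $\phi $, the [MS] fact that a Koszul module has $\phi $-cohomology concentrated in degree zero and isomorphic there to its Koszul dual, and the stability of Z-torsion homology. The only (harmless) differences are that you peel the filtration from the top ($U_{p}/U_{p-1}$) where the paper peels from the bottom ($U_{0}$), and that you rely on closure of Z-torsion modules under subobjects, quotients and extensions instead of the specific homology isomorphisms the paper asserts, which if anything makes the step a bit more robust.
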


\begin{proof}
As above, $\phi $($M$) is in $\mathcal{T}$ if and only if $\phi $($N$) is in 
$\mathcal{T}$.

The exact sequence: $0\rightarrow U_{0}$ $\rightarrow U_{1}\rightarrow $ $%
U_{1}$/ $U_{0}\rightarrow 0$ induces an exact sequence: $0\rightarrow \phi $(%
$U_{1}$/ $U_{0}$)$\rightarrow \phi $($U_{1}$)$\rightarrow \phi $($U_{0}$)$%
\rightarrow 0$ such that H$^{0}$($\phi $($U_{1}$))$\cong $H$^{0}$($\phi $($%
U_{0}$))$\cong G_{B_{n}^{!}}$($U_{0}$) is of Z-torsion and H$^{i}$($\phi $($%
U_{1}$/$U_{0}$))$\cong $H$^{i}$($\phi $($U_{1}$)) is of Z-torsion for all i$%
\neq $0. By induction, H$^{i}$($\phi $(N)) is of Z-torsion for all i, hence $%
\phi $($N$) is in $\mathcal{T}$.
\end{proof}

\begin{theorem}
Let $\mathcal{T}$ $^{\prime }$ be the subcategory of \underline{gr}$%
_{B_{n}^{!}}$ corresponding to \emph{T }under the duality: $\overline{\phi }$%
:\underline{gr}$_{_{B_{n}^{!}}}\rightarrow $D$^{b}$(Qgr$_{B_{n}}$). This is: 
$\overline{\phi }$($\mathcal{T}^{\prime }$)=\emph{T}.\emph{\ }Then $\mathcal{%
T}$ $^{\prime }$ is the smallest triangulated subcategory of \underline{gr}$%
_{B_{n}^{!}}$containing the induced modules and closed under the Nakayama
automorphism.
\end{theorem}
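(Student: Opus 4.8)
The plan is to set $\mathcal{S}$ equal to the smallest triangulated subcategory of $\underline{\text{gr}}_{B_n^{!}}$ that contains every induced module $B_n^{!}\otimes_{C_n^{!}}M$ and is closed under the Nakayama automorphism $\sigma$, and to prove $\mathcal{S}=\mathcal{T}^{\prime}$ by two inclusions. Since $\overline{\phi}:\underline{\text{gr}}_{B_n^{!}}\rightarrow D^{b}(\text{Qgr}_{B_n})$ is a duality of triangulated categories and $\mathcal{T}$ is a thick subcategory, $\mathcal{T}^{\prime}=\overline{\phi}^{-1}(\mathcal{T})$ is automatically a thick, hence triangulated, subcategory of $\underline{\text{gr}}_{B_n^{!}}$; this is the ambient structure used throughout.

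For the inclusion $\mathcal{S}\subseteq\mathcal{T}^{\prime}$ I would first invoke the Proposition proved above, which gives $\overline{\phi}(B_n^{!}\otimes_{C_n^{!}}M)\in\mathcal{T}$ for every $C_n^{!}$-module $M$, so all generators of $\mathcal{S}$ lie in $\mathcal{T}^{\prime}$. Next I would check that $\mathcal{T}^{\prime}$ is closed under $\sigma$: by the construction recalled in Section~1 (following [M]) the duality $\overline{\phi}$ intertwines the $\sigma$-twist on $\underline{\text{gr}}_{B_n^{!}}$ with $D^{b}(\sigma)$ on $D^{b}(\text{Qgr}_{B_n})$, and the latter preserves $\mathcal{T}$ by the Corollary, which rests in turn on the Remark that $F(\sigma M)=\sigma F(M)$ is of $Z$-torsion whenever $F(M)$ is. Being triangulated, $\sigma$-stable, and containing the generators of $\mathcal{S}$, the category $\mathcal{T}^{\prime}$ then contains $\mathcal{S}$.

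The reverse inclusion $\mathcal{T}^{\prime}\subseteq\mathcal{S}$ is the heart of the argument and the step I expect to be the main obstacle. Take $X\in\mathcal{T}^{\prime}$, represented by a finitely generated graded $B_n^{!}$-module $M$ with $\pi\phi(M)\in\mathcal{T}$, that is, with $H^{i}(\phi(M))$ of $Z$-torsion for all $i$. Using $\overline{\phi}(\Omega^{k}M)\cong\overline{\phi}(M)[k]$, the shift-invariance of $\mathcal{T}$, and [MZ], I may replace $M$ by a sufficiently high syzygy and assume $M$ weakly Koszul with a filtration (as in [MZ]) $M=U_p\supset\cdots\supset U_1\supset U_0=K_M$ whose bottom $K_M$ and factors $U_i/U_{i-1}$ are Koszul and satisfy $J^{k}U_i\cap U_{i-1}=J^{k}U_{i-1}$; after a further syzygy reduction I may also assume that all $Z$-powers of each $V_i:=G_{B_n^{!}}(U_i/U_{i-1})$ are Koszul. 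Running the long exact cohomology sequence on the short exact sequences $0\rightarrow\phi(U_{i-1})\rightarrow\phi(U_i)\rightarrow\phi(U_i/U_{i-1})\rightarrow0$ and iterating — the same homological bookkeeping carried out above — forces each Koszul $B_n$-module $V_i$, and likewise $G_{B_n^{!}}(K_M)$, to be of $Z$-torsion. Then I devissage each $V_i$ along $V_i\supset ZV_i\supset\cdots\supset Z^{k_i}V_i\supset0$: the long exact sequence $(\ast\ast)$ exhibits $U_i/U_{i-1}=F_{B_n}(V_i)$ as an iterated extension of the modules $F_{B_n}(Z^{j}V_i/Z^{j+1}V_i)[-j]$, and since each $Z^{j}V_i/Z^{j+1}V_i$ is annihilated by $Z$ and hence a Koszul $C_n$-module, the Proposition $B_n^{!}\otimes_{C_n^{!}}F_{C_n}(-)\cong F_{B_n}(-)$ identifies each such factor with an internally shifted induced module, which is again induced by the Lemma that tensoring commutes with internal shift. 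As $\mathcal{S}$ is triangulated it absorbs the extensions realizing $(\ast\ast)$, so $U_i/U_{i-1}\in\mathcal{S}$, and similarly $K_M\in\mathcal{S}$; finally $M$, built from $K_M$ and the $U_i/U_{i-1}$ through the triangles of $0\rightarrow U_{i-1}\rightarrow U_i\rightarrow U_i/U_{i-1}\rightarrow0$, lies in $\mathcal{S}$.

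Combining the two inclusions identifies $\mathcal{T}^{\prime}$ with $\mathcal{S}$, which is exactly the asserted characterization; note that the second inclusion in fact shows $\mathcal{T}^{\prime}$ lies inside any triangulated subcategory containing the induced modules, so once the first inclusion is in hand the $\sigma$-stability demanded in the statement is automatic. The two places that require real care are the compatibility of $\overline{\phi}$ with the two Nakayama actions, used for $\mathcal{S}\subseteq\mathcal{T}^{\prime}$, and the control of internal degree shifts together with the successive syzygy reductions when transporting $(\ast\ast)$ and the [MZ]-filtration through $\overline{\phi}$ and the stable category, used for $\mathcal{T}^{\prime}\subseteq\mathcal{S}$; the remainder is an assembly of results already established above.
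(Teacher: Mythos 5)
Your proposal is correct and follows essentially the same route as the paper: reduce an object of $\mathcal{T}^{\prime}$ to a weakly Koszul syzygy with the [MZ] filtration, deduce that the Koszul duals $V_{i}=G_{B_{n}^{!}}(U_{i}/U_{i-1})$ are of $Z$-torsion from the long exact homology sequences, d\'evissage each $V_{i}$ along powers of $Z$ so that the resulting resolution exhibits $U_{i}/U_{i-1}$ as built from induced modules $B_{n}^{!}\otimes_{C_{n}^{!}}X_{ij}$, and absorb everything by triangles; the opposite inclusion is exactly the earlier Proposition on induced modules together with the $\sigma$-invariance of $\mathcal{T}$, which the paper records just before the theorem rather than inside its proof. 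The only slip is the direction of the arrows in $0\rightarrow\phi(U_{i-1})\rightarrow\phi(U_{i})\rightarrow\phi(U_{i}/U_{i-1})\rightarrow0$ (since $\phi$ is contravariant the sequence is reversed), which does not affect the argument.
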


\begin{proof}
Let $\mathcal{B}$ be a triangulated subcategory of \underline{gr}$%
_{B_{n}^{!}}$containing the induced modules and closed under the Nakayama
automorphism. Let $M\in $ $\mathcal{T}$ $^{\prime }$ and $\Omega ^{n}M$=$N$
weakly Koszul with a filtration $N$=$U_{p}\supset U_{p-1}\supset
...U_{1}\supset U_{0}$ such that $U_{i}/U_{i-1}$ is Koszul, and for all k$%
\geq $0, $J^{k}U_{i}\cap U_{i-1}$=$J^{k}U_{i-1}$.

Since $\mathcal{T}$ $^{\prime }$ is closed under the shift the module $N$ is
also in $\mathcal{T}$ $^{\prime }$. We prove by induction on the length of
the filtration that for each $i$ the modules $U_{i}$, $U_{i}/U_{i-1}$are in $%
\mathcal{T}$ $^{\prime }$.

We have an exact sequence of complexes: $0\rightarrow \phi ($ $N/$ $%
U_{0})\rightarrow \phi (N)\rightarrow \phi (U_{0})\rightarrow 0$

By the long homology sequence there is an exact sequence:

...H$_{i+1}$($\phi $($U_{0}$)$\rightarrow $H$_{i}$($\phi $( $N$/ $U_{0}$))$%
\rightarrow $H$_{i}$($\phi $($N$))$\rightarrow $H$_{i}$($\phi $($U_{0}$))$%
\rightarrow $H$_{i-1}$($\phi $($N$/ $U_{0}$))...\linebreak $\rightarrow $H$%
_{0}$($\phi $( $N$/ $U_{0}$))$\rightarrow $H$_{0}$($\phi $($N$))$\rightarrow 
$H$_{0}$($\phi $($U_{0}$))$\rightarrow 0$

By [MZ], H$_{0}$($\phi $($N$))=H$_{0}$($\phi $($U_{0}$)), H$_{0}$($\phi $( $%
N $/$U_{0}$))=$0$ and H$_{i}$($\phi $($U_{0})$)=0 for all i$\neq $0. Then H$%
_{i}$($\phi $($N/$ $U_{0}$))=H$_{i}$($\phi $($N$)) for all i$\neq $0 and H$%
_{0}$($\phi $($U_{0}$)) is of Z-torsion and \linebreak H$_{i}$($\phi $($N$/$%
U_{0}$)) is of Z -torsion for all i.

It follows by induction, $U_{i}$, $U_{i}$/$U_{i-1}$are in $\mathcal{T}$ $%
^{\prime }$ for all $i$.

The Koszul modules $G_{B_{n}^{!}}$($U_{i}$/$U_{i-1}$)=V$_{i}$ are of
Z-torsion and each Z$^{j}V_{i}$ is Koszul.

There exists an exact sequence:

$0\rightarrow F_{B_{n}}$(Z$^{k_{i}}V_{i}$)[-k$_{i}$]$\rightarrow F_{B_{n}}$(Z%
$^{k_{i}-1}V_{i}$/Z$^{k_{i}}V_{i}$)[-k$_{i}$+1]...

$\rightarrow F_{B_{n}}$($V_{i}$/Z$V_{i}$)$\rightarrow U_{i}$/$%
U_{i-1}\rightarrow $0 where each $F_{B_{n}}$(Z$^{j}V_{i}$/Z$^{j+1}V_{i}$)$%
\cong B_{n}^{!}\underset{C_{n}^{!}}{\otimes }X_{ij}$ is an induced module of
a Koszul $C_{n}^{!}$-module $X_{ij}$.

Then each $F_{B_{n}}$(Z$^{j}V_{i}$/Z$^{j+1}V_{i}$) $\cong B_{n}^{!}\underset{%
C_{n}^{!}}{\otimes }X_{ij}$ is in $\mathcal{B}$ .

Moreover, the exact sequences: 0$\rightarrow $B$_{n}^{!}\underset{C_{n}^{!}}{%
\otimes }$X$_{ik_{i}}\rightarrow $B$_{n}^{!}\underset{C_{n}^{!}}{\otimes }$X$%
_{ik_{i}-1}\rightarrow $K$_{k_{i}-2}\rightarrow $0 gives rise to triangles: $%
B_{n}^{!}\underset{C_{n}^{!}}{\otimes }X_{ik_{i}}\rightarrow B_{n}^{!}%
\underset{C_{n}^{!}}{\otimes }X_{ik_{i}-1}\rightarrow K_{k_{i}-2}\rightarrow
\Omega ^{-1}$($B_{n}^{!}\underset{C_{n}^{!}}{\otimes }X_{ik_{i}}$).
Therefore $K_{k_{i}-2}\in \mathcal{B}$. It follows by induction, $U_{i}$/$%
U_{i-1}\in \mathcal{B}$.

The filtration $N$=$U_{p}\supset U_{p-1}\supset ...U_{1}\supset U_{0}$
induces triangles:

$U_{0}\rightarrow U_{1}\rightarrow U_{1}$/$U_{0}\rightarrow \Omega ^{-1}$($%
U_{0}$) with $U_{0}$,$U_{1}$/$U_{0}\in \mathcal{B}$. It follows $U_{1}\in 
\mathcal{B}$.

By induction, $N\in \mathcal{B}$.

We have proved $\mathcal{T}^{\prime }\subset \mathcal{B}$.
\end{proof}

\begin{theorem}
\bigskip Let $\mathcal{T}$ $^{\prime }$ be the subcategory of \underline{gr}$%
_{B_{n}^{!}}$ corresponding to \emph{T }under the duality: $\overline{\phi }$%
:\underline{gr}$_{_{B_{n}^{!}}}\rightarrow $D$^{b}$(Qgr$_{B_{n}}$). This is: 
$\overline{\phi }$($\mathcal{T}^{\prime }$)=\emph{T }. Then $\mathcal{T}$ $%
^{\prime }$has Auslander Reiten triangles and they are of type $%
\mathbb{Z}
A_{\infty }$.
\end{theorem}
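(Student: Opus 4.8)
The plan is to obtain both assertions by comparing $\mathcal{T}'$ with the ambient stable category $\underline{\mathrm{gr}}_{B_{n}^{!}}$. Since $B_{n}^{!}$ is a finite dimensional graded selfinjective algebra, $\underline{\mathrm{gr}}_{B_{n}^{!}}$ is a Hom-finite Krull--Schmidt triangulated category with Auslander--Reiten triangles, whose translation is $\tau \cong \Omega ^{2}\circ \mathcal{N}$, where $\mathcal{N}=\mathrm{D}(B_{n}^{!})\otimes _{B_{n}^{!}}-$ is the graded Nakayama functor; by Section~1 the functor $\mathcal{N}$ is, up to the internal shift by the socle degree $n+1$, the autoequivalence induced by the Nakayama automorphism $\sigma $. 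By the preceding theorem $\mathcal{T}'$ is the smallest triangulated subcategory of $\underline{\mathrm{gr}}_{B_{n}^{!}}$ containing the induced modules $B_{n}^{!}\otimes _{C_{n}^{!}}M$ and closed under $\sigma $. The class of these induced modules is moreover stable under the internal degree shift and under the twist by $\sigma ^{-1}$ (because $\sigma (C_{n}^{!})=C_{n}^{!}$ and $\sigma (ZC_{n}^{!})=ZC_{n}^{!}$ give $\sigma ^{-1}_{\ast }(B_{n}^{!}\otimes _{C_{n}^{!}}M)\cong B_{n}^{!}\otimes _{C_{n}^{!}}\sigma ^{-1}_{\ast }M$); applying $\sigma ^{-1}$, respectively an internal shift, to $\mathcal{T}'$ and using minimality yields $\sigma \mathcal{T}'=\mathcal{T}'$ and invariance of $\mathcal{T}'$ under the internal shift. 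Hence $\mathcal{T}'$ is closed under $\Omega ^{\pm 1}$, under $\mathcal{N}^{\pm 1}$, and therefore under $\tau ^{\pm 1}$; and $\mathcal{T}'$ is thick, since under the duality $\overline{\phi }$ it corresponds to the thick (\'{e}paisse) subcategory $\mathcal{T}$.

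These closure properties give, first, that $\mathcal{T}'$ has Auslander--Reiten triangles: for nonzero $X\in \mathcal{T}'$ the Auslander--Reiten triangle $\tau X\rightarrow E\rightarrow X\rightarrow \Omega ^{-1}\tau X$ of $\underline{\mathrm{gr}}_{B_{n}^{!}}$ has both outer terms in $\mathcal{T}'$, so $E\in \mathcal{T}'$, and since $\mathcal{T}'$ is full this is an Auslander--Reiten triangle of $\mathcal{T}'$. Second, using closure under $\tau ^{\pm 1}$ and under direct summands, one checks that every neighbour in $\Gamma _{s}(\underline{\mathrm{gr}}_{B_{n}^{!}})$ of an object of $\mathcal{T}'$ again lies in $\mathcal{T}'$; hence $\Gamma _{s}(\mathcal{T}')$ is a disjoint union of entire connected components of $\Gamma _{s}(\underline{\mathrm{gr}}_{B_{n}^{!}})$, and it suffices to determine the type of one such component $\Theta $.

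On $\Theta $ I would use the subadditive function $\ell (M)=\dim _{\Bbbk }\underline{M}$, where $\underline{M}$ is the projective-free part of $M$: evaluating $\ell $ on the Auslander--Reiten sequences $0\rightarrow \tau M\rightarrow E\rightarrow M\rightarrow 0$ in $\mathrm{gr}_{B_{n}^{!}}$ and discarding projective summands gives $\ell (\tau M)+\ell (M)\ge \sum _{N\to M}\ell (N)$. This $\ell $ is not additive: were it additive, Riedtmann's structure theorem and the Happel--Preiser--Ringel analysis of (sub)additive functions would force $\Theta $ to be of finite Dynkin or Euclidean type, hence to contain a $\tau $-periodic object; but no nonzero object of $\underline{\mathrm{gr}}_{B_{n}^{!}}$ is $\tau $-periodic, since on its normalised representative $\tau $ moves the internal degree off to infinity (a standard phenomenon for positively graded selfinjective algebras with degree-zero part a field, here explicit from the descriptions of $\mathrm{D}(B_{n}^{!})$ and $\sigma $ in Section~1). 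The same fact makes the admissible group trivial, so $\Theta \cong \mathbb{Z}\Delta $; and $\Theta $ is infinite, the $\tau $-orbit of any vertex already being infinite. By the Happel--Preiser--Ringel list it remains only to decide among $\Delta \in \{A_{\infty },B_{\infty },C_{\infty },D_{\infty }\}$.

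Excluding $B_{\infty }$, $C_{\infty }$ and $D_{\infty }$ is the main point, and where the rank-two structure $B_{n}^{!}=C_{n}^{!}\oplus ZC_{n}^{!}$ with $Z^{2}=0$ enters. Using $\Omega _{B_{n}^{!}}(B_{n}^{!}\otimes _{C_{n}^{!}}M)\cong B_{n}^{!}\otimes _{C_{n}^{!}}\Omega _{C_{n}^{!}}(M)$ together with the compatibility of $\sigma $ with this decomposition, the minimal projective presentation over $B_{n}^{!}$ of an object of $\mathcal{T}'$, hence the middle term of its Auslander--Reiten triangle, is controlled by the corresponding data over $C_{n}^{!}$ twisted by the length-two extension along $Z$; the two free columns $C_{n}^{!}$ and $ZC_{n}^{!}$ of $B_{n}^{!}$ then account for at most two indecomposable summands in the middle term and for trivially valued arrows, which rules out $D_{\infty }$, $B_{\infty }$ and $C_{\infty }$. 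The only surviving possibility is $\Delta =A_{\infty }$, so $\Theta \cong \mathbb{Z}A_{\infty }$, and the theorem follows. I expect this last step --- controlling the number of summands of the middle term of the Auslander--Reiten triangle for objects of $\mathcal{T}'$ --- to be the hard part.
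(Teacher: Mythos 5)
Your first half---existence of Auslander--Reiten triangles in $\mathcal{T}'$---is sound and essentially the paper's argument: for an indecomposable non-projective $M\in \mathcal{T}'$ the translate $\sigma \Omega ^{2}M$ lies in $\mathcal{T}'$ because $\mathcal{T}'$ is triangulated (so closed under $\Omega ^{\pm 1}$) and closed under the Nakayama automorphism, and the middle term again lies in $\mathcal{T}'$ (the paper checks this via exactness of $\phi $ and the long homology sequence, you via closure under cones---both are fine); since $\mathcal{T}'$ is full, the ambient AR triangles are AR triangles of $\mathcal{T}'$.

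The gap is in the determination of the type. The paper does not re-derive it: it quotes [MZ], where it is proved that the Auslander--Reiten components of $\underline{\text{gr}}_{B_{n}^{!}}$ are of type $\mathbb{Z}A_{\infty }$, and then notes that the AR triangles of $\mathcal{T}'$ are AR triangles of the ambient category, so its components are ambient components and hence of type $\mathbb{Z}A_{\infty }$. Your substitute argument breaks down exactly where you say it is hard: the claim that the decomposition $B_{n}^{!}=C_{n}^{!}\oplus ZC_{n}^{!}$ bounds the number of indecomposable summands of the middle term of an AR triangle by two is asserted, not proved, and does not follow from the facts you cite (the rank of $B_{n}^{!}$ over $C_{n}^{!}$ does not control middle terms, and nothing in the sketch excludes a branch vertex as in $D_{\infty }$). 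There is also a logical slip earlier: an additive subadditive function is perfectly compatible with components of type $\mathbb{Z}A_{\infty }^{\infty }$, $\mathbb{Z}B_{\infty }$, $\mathbb{Z}C_{\infty }$, $\mathbb{Z}D_{\infty }$, so additivity would not force Dynkin or Euclidean type, and your contradiction therefore does not establish that $\ell $ fails to be additive. Had you actually shown $\ell $ subadditive but not additive, Happel--Preiser--Ringel would already give tree class Dynkin or $A_{\infty }$, and aperiodicity (which you argue correctly from the grading) kills the Dynkin case---this is essentially the route of [MZ]; as written, neither that route nor your summand-counting route is carried out, so the statement that the triangles are of type $\mathbb{Z}A_{\infty }$ remains unproved in your proposal.
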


\begin{proof}
Let $M$ be an indecomposable non projective module in $\mathcal{T}$ $%
^{\prime }$. Then we have almost split sequences: $0\rightarrow \sigma
\Omega ^{2}M\rightarrow E\rightarrow M\rightarrow 0$ and $0\rightarrow
M\rightarrow F\rightarrow \sigma ^{-1}\Omega ^{-2}M\rightarrow 0$, since the
category \emph{T }is closed under the Nakayama automorphism, $\mathcal{T}$ $%
^{\prime }$ is also closed under the Nakayama automorphism and $\sigma
\Omega ^{2}M$, $\sigma ^{-1}\Omega ^{-2}M$ are objects in $\mathcal{T}$ $%
^{\prime }$. From the exact sequences of complexes: $0\rightarrow \phi $($M$)%
$\rightarrow \phi $($E$)$\rightarrow \phi $($\sigma \Omega ^{2}M$)$%
\rightarrow 0$ and $0\rightarrow \phi $($\sigma ^{-1}\Omega
^{-2}M)\rightarrow \phi $($F$)$\rightarrow \phi $($M$)$\rightarrow 0$ and
the long homology sequence we get that both $\phi $( $E$) and $\phi $( $F$)
are in \emph{T}. Therefore: $E$ and $F$ are in $\mathcal{T}^{\prime }$. We
have proved $\mathcal{T}$ $^{\prime }$has almost split sequences and they
are almost split sequences in $gr_{_{B_{n}^{!}}}$. We proved in [MZ] that
the Auslander Reiten components of \underline{gr}$_{B_{n}^{!}}$ are of type $%
\mathbb{Z}
A_{\infty }$. It follows $\sigma \Omega ^{2}M\rightarrow E\rightarrow
M\rightarrow \sigma \Omega ^{2}M$[-1], $M\rightarrow F\rightarrow \sigma
^{-1}\Omega ^{-2}M\rightarrow M$[-1] are Auslander Reiten triangles and the
Auslander Reiten components are of type $%
\mathbb{Z}
A_{\infty }$.
\end{proof}

We will characterize now the full subcategory $\mathcal{F}$ $^{\prime }$ of 
\underline{\text{gr}}$_{B_{n}^{!}}$such that $\overline{\phi }$($\mathcal{F}$
$^{\prime }$)=$\mathcal{F}$\emph{\ .}

\begin{theorem}
The subcategory $\mathcal{F}$ $^{\prime }$ of \underline{gr}$_{B_{n}^{!}}$%
such that $\overline{\phi }$($\mathcal{F}$ $^{\prime }$)=$\mathcal{F}$
consists of the graded $B_{n}^{!}$-modules $M$ such that the restriction of $%
M$ to $C_{n}^{!}$ is injective.
\end{theorem}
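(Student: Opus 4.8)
The plan is to transport the defining property of $\mathcal{F}$ through the duality $\overline{\phi}$ into a vanishing condition for stable Hom groups in $\underline{\mathrm{gr}}_{B_n^!}$, to reduce that condition — by means of induction and restriction along $C_n^!\hookrightarrow B_n^!$ — to a statement about $M$ as a $C_n^!$-module, and then to invoke the self-injectivity of $C_n^!$. Since $\overline{\phi}\colon\underline{\mathrm{gr}}_{B_n^!}\to D^{b}(\mathrm{Qgr}_{B_n})$ is a duality of triangulated categories taking $\mathcal{T}'$ to $\mathcal{T}$, one has $\operatorname{Hom}_{D^{b}(\mathrm{Qgr}_{B_n})}(\overline{\phi}(N),\overline{\phi}(M))\cong\operatorname{Hom}_{\underline{\mathrm{gr}}_{B_n^!}}(M,N)$, so $\overline{\phi}(M)$ is $\mathcal{T}$-local precisely when $\operatorname{Hom}_{\underline{\mathrm{gr}}_{B_n^!}}(M,N)=0$ for all $N\in\mathcal{T}'$. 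By the theorem proved above, $\mathcal{T}'$ is the smallest triangulated subcategory of $\underline{\mathrm{gr}}_{B_n^!}$ containing the induced modules $B_n^!\underset{C_n^!}{\otimes}X$ and closed under the Nakayama automorphism $\sigma$. The class of induced modules is, up to stable isomorphism, already closed under $\Omega^{\pm1}$ and under $\sigma$: the isomorphism $\Omega_{B_n^!}(B_n^!\otimes_{C_n^!}X)\cong B_n^!\otimes_{C_n^!}\Omega_{C_n^!}X$ is an established lemma; its $\Omega^{-1}$-analogue follows because $B_n^!$ is flat over $C_n^!$ and the induction of a projective (hence, $C_n^!$ being Frobenius, injective) $C_n^!$-module is an injective $B_n^!$-module; and $\sigma$-twisting is handled by ${}^{\sigma}(B_n^!\otimes_{C_n^!}X)\cong B_n^!\otimes_{C_n^!}({}^{\sigma|_{C_n^!}}X)$, valid because $\sigma$ restricts to a graded automorphism of $C_n^!$ (Section 1). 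Hence the triangulated subcategory generated by the induced modules is already closed under $\sigma$, so it equals $\mathcal{T}'$, and since $\operatorname{Hom}_{\underline{\mathrm{gr}}_{B_n^!}}(M,-)$ is cohomological, $M\in\mathcal{F}'$ if and only if $\underline{\operatorname{Hom}}_{B_n^!}(M,\,B_n^!\underset{C_n^!}{\otimes}X)=0$ for every finitely generated graded $C_n^!$-module $X$.

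To evaluate these groups I would use that $B_n^!=C_n^!\oplus C_n^!Z$ with $C_n^!Z\cong C_n^![-1]$ (Section 1), so that $B_n^!$ is free of finite rank over $C_n^!$ on both sides; as $B_n^!$ and $C_n^!$ are Frobenius, $C_n^!\hookrightarrow B_n^!$ is a Frobenius extension, and therefore the induction functor $B_n^!\otimes_{C_n^!}-$ is naturally isomorphic to the coinduction functor $\operatorname{Hom}_{C_n^!}(B_n^!,-)$ up to a degree shift and a twist by a graded automorphism $\rho$ of $C_n^!$. Combined with the adjunction between restriction and coinduction this gives a natural isomorphism $\operatorname{Hom}_{B_n^!}(M,\,B_n^!\otimes_{C_n^!}X)\cong\operatorname{Hom}_{C_n^!}(\operatorname{res}_{C_n^!}M,\,{}^{\rho}X[d])$. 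It respects maps factoring through projectives: a map $M\to B_n^!\otimes_{C_n^!}X$ factoring through a projective $B_n^!$-module restricts to one factoring through a projective $C_n^!$-module, and conversely, since coinduction sends a projective $=$ injective $C_n^!$-module to a projective $=$ injective $B_n^!$-module, a map whose adjoint factors through a projective $C_n^!$-module itself factors through a projective $B_n^!$-module. Hence it descends to the stable categories, yielding $\underline{\operatorname{Hom}}_{B_n^!}(M,\,B_n^!\otimes_{C_n^!}X)\cong\underline{\operatorname{Hom}}_{C_n^!}(\operatorname{res}_{C_n^!}M,\,{}^{\rho}X[d])$, naturally in $X$. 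As $X$ runs over all finitely generated graded $C_n^!$-modules so does ${}^{\rho}X[d]$, so $M\in\mathcal{F}'$ if and only if $\underline{\operatorname{Hom}}_{C_n^!}(\operatorname{res}_{C_n^!}M,\,Y)=0$ for every such $Y$.

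Finally, over the finite-dimensional self-injective algebra $C_n^!$ a module $N$ satisfies $\underline{\operatorname{Hom}}_{C_n^!}(N,-)=0$ exactly when $N$ is projective: if $N$ is projective then $\mathrm{id}_N$ itself factors through a projective, so every map out of $N$ does; conversely, taking $Y=N$ shows that $\mathrm{id}_N$ factors through a projective, whence $N$ is a summand of a projective. Since over $C_n^!$ projective and injective coincide, this means $M\in\mathcal{F}'$ if and only if $\operatorname{res}_{C_n^!}M$ is injective, as claimed. I expect the genuine work to be in the middle paragraph: checking precisely that $C_n^!\hookrightarrow B_n^!$ is a Frobenius extension — equivalently, writing down the explicit natural isomorphism between induction and coinduction with its twist $\rho$ and shift $d$ — and verifying that the resulting identification of Hom groups is compatible with factoring through projectives so that it passes to the stable categories. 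The remaining steps are a direct translation through $\overline{\phi}$ together with the description of $\mathcal{T}'$ already obtained, and the standard behaviour of projectives over a self-injective algebra.
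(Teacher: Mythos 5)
Your proposal is correct in outline but takes a genuinely different route from the paper's proof. The paper also starts by translating $\mathcal{T}$-locality through $\overline{\phi}$ into $\underline{\mathrm{Hom}}_{B_n^!}(M,\mathcal{T}')=0$, but from there it argues asymmetrically: for one direction it tests only against induced modules, converts $\underline{\mathrm{Hom}}_{B_n^!}(\Omega^{-2}M,B_n^!\otimes_{C_n^!}L)$ into $\mathrm{Ext}^{1}_{B_n^!}(B_n^!\otimes_{C_n^!}L,M)$ by the Auslander--Reiten formula and then uses the (induction, restriction) adjunction and dimension shifting to get $\mathrm{Ext}^{k}_{C_n^!}(L,M)=0$ for all $k\geq 1$; for the converse it reruns the weakly Koszul filtration machinery for an arbitrary $X\in\mathcal{T}'$, resolving the filtration factors by induced modules and dimension shifting to kill $\underline{\mathrm{Hom}}_{B_n^!}(M,X)$. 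You instead invoke the generation statement for $\mathcal{T}'$ once and for all (with the pleasant observation that closure under $\sigma$ is automatic, since $\sigma$-twists of induced modules are induced because $\sigma(C_n^!)=C_n^!$ by Section 1), and then compute $\underline{\mathrm{Hom}}_{B_n^!}(M,B_n^!\otimes_{C_n^!}X)\cong\underline{\mathrm{Hom}}_{C_n^!}(\mathrm{res}\,M,{}^{\rho}X[d])$ via coinduction, which treats both implications uniformly, avoids the AR formula, and makes the converse much shorter than the paper's second pass through the filtrations. The one step you assert rather than prove is the claim that $C_n^!\subset B_n^!$ is a ($\beta$-)Frobenius extension: ``both algebras are Frobenius and $B_n^!$ is free over $C_n^!$'' is not by itself sufficient -- one also needs the Nakayama automorphism of $B_n^!$ to preserve $C_n^!$ (Nakayama--Tsuzuku), or equivalently a direct check that $\mathrm{Hom}_{C_n^!}(B_n^!,C_n^!)\cong\mathrm{Hom}_{C_n^!}(B_n^!,D(C_n^!))_{\mathrm{tw}}\cong D(B_n^!)_{\mathrm{tw}}\cong (B_n^!)_{\mathrm{tw}}$ as graded $(B_n^!,C_n^!)$-bimodules up to a shift. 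Fortunately Section 1 supplies exactly the needed ingredients ($\sigma(C_n^!)=C_n^!$ and $D(B_n^!)\cong B_n^!$ twisted as bimodules), so the step you flag as ``the genuine work'' does go through; note that the paper's AR-formula step quietly relies on the same structural fact, namely stability of the test objects under $\sigma$ to absorb the Nakayama twist. Once that bimodule isomorphism is written out, your argument is complete and arguably cleaner than the original.
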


\begin{proof}
Let $M\in \mathcal{F}$ $^{\prime }.$ There is an isomorphism:

Hom$_{D^{b}(Qgr_{B_{n}^{op}})}$($\mathcal{T}$,$\pi \phi $($M$))$\cong $%
\underline{Hom}$_{grB_{n}^{!}}$($M$,$\mathcal{T}$ $^{\prime }$)=0, which
implies \underline{Hom}$_{B_{n}^{!}}$(M,$\mathcal{T}^{\prime }$)=0

In particular for any induced module $B_{n}^{!}\underset{C_{n}^{!}}{\otimes }%
\Omega ^{2}L$ we have:

\underline{Hom}$_{B_{n}^{!}}$($M$,$B_{n}^{!}\underset{C_{n}^{!}}{\otimes }%
\Omega ^{2}L$)=0=\underline{Hom}$_{B_{n}^{!}}$($\Omega ^{-2}M$,$B_{n}^{!}%
\underset{C_{n}^{!}}{\otimes }L).$

By Auslander-Reiten formula:

D(\underline{Hom}$_{B_{n}^{!}}$($\Omega ^{-2}M$,$B_{n}^{!}\underset{C_{n}^{!}%
}{\otimes }L$))=Ext$_{B_{n}^{!}}^{1}$($B_{n}^{!}\underset{C_{n}^{!}}{\otimes 
}L$,$M)$=0 for all $L\in $gr$_{C_{n}^{!}}.$

Consider the exact sequences: $0\rightarrow \Omega _{C_{n}^{!}}$($L$)$%
\rightarrow F\rightarrow L\rightarrow 0$, with $F$ the projective cover of $%
L $. It induces an exact sequence:\newline
$0\rightarrow B_{n}^{!}\underset{C_{n}^{!}}{\otimes }\Omega _{C_{n}^{!}}$($L$%
)$\rightarrow B_{n}^{!}\underset{C_{n}^{!}}{\otimes }F\rightarrow B_{n}^{!}%
\underset{C_{n}^{!}}{\otimes }L\rightarrow 0$

By the long homology sequence, there is an exact sequence:

$0\rightarrow $Hom$_{B_{n}^{!}}$($B_{n}^{!}\underset{C_{n}^{!}}{\otimes }L$,$%
M$)$\rightarrow $Hom$_{B_{n}^{!}}$($B_{n}^{!}\underset{C_{n}^{!}}{\otimes }F$%
,$M$)$\rightarrow $Hom$_{B_{n}^{!}}$($B_{n}^{!}\underset{C_{n}^{!}}{\otimes }%
\Omega _{C_{n}^{!}}$($L$),$M$)$\rightarrow $

Ext$_{B_{n}^{!}}^{1}$($B_{n}^{!}\underset{C_{n}^{!}}{\otimes }L$,$M$)$%
\rightarrow 0$ which by the adjunction isomorphism are isomorphic to the
exact sequences:

$0\rightarrow $Hom$_{C_{n}^{!}}$($L$, $M$)$\rightarrow $Hom$_{C_{n}^{!}}$($F$%
, $M)$ $\rightarrow $Hom$_{C_{n}^{!}}$($\Omega _{C_{n}^{!}}$($L$), $M$)$%
\rightarrow $Ext$_{C_{n}^{!}}^{1}$($L$, $M$)$\rightarrow 0.$

It follows, Ext$_{C_{n}^{!}}^{1}$($L$,$M$)$\cong $Ext$_{B_{n}^{!}}^{1}$($%
B_{n}^{!}\underset{C_{n}^{!}}{\otimes }L$, $M$) and by dimension shift

Ext$_{C_{n}^{!}}^{k}$($L$, $M$)$\cong $Ext$_{B_{n}^{!}}^{k}$($B_{n}^{!}%
\underset{C_{n}^{!}}{\otimes }L$, $M$)=0 for all k$\geq $1.

We have proved the restriction of $M$ to $C_{n}^{!}$ is injective.

Let's assume now the restriction of $M$ to $C_{n}^{!}$ is injective:

Then for any integer $n$ the restriction of $\Omega ^{n}M$ to $C_{n}^{!}$ is
injective.

Let $X\in \mathcal{T}^{\prime }$. There exists an integer n$\geq $0 such
that $\Omega ^{n}X$ =$Y$, is weakly Koszul and it has a filtration: $Y$=$%
U_{p}\supset U_{p-1}\supset ...U_{1}\supset U_{0}$ such that $U_{i}$/$%
U_{i-1} $ is Koszul, and for all k$\geq $0, $J^{k}U_{i}\cap U_{i-1}$=$%
J^{k}U_{i-1}.$The Koszul modules $G_{B_{n}^{!}}$($U_{i}$/$U_{i-1}$)=$V_{i}$
are of $Z$-.torsion and each Z$^{j}V_{i}$ is Koszul.

Set $N$=$\Omega ^{n}M$, the restriction of $N$ to $C_{n}^{!}$ is injective.

There exist exact sequences:

$0\rightarrow F_{B_{n}}$(Z$^{k_{i}}V_{i}$)[-k$_{i}$]$\rightarrow F_{B_{n}}$(Z%
$^{k_{i}-1}V_{i}$/Z$^{k_{i}}V_{i}$)[-k$_{i}$+1]...

$\rightarrow F_{B_{n}}$($V_{i}$/Z$V_{i}$)$\rightarrow U_{i}$/$%
U_{i-1}\rightarrow 0$ where each $F_{B_{n}}$(Z$^{j}V_{i}$/Z$^{j+1}V_{i}$) $%
\cong B_{n}^{!}\underset{C_{n}^{!}}{\otimes }X_{ij}$ is an induced module of
a Koszul $C_{n}^{!}$-module $X_{ij}$.

The exact sequences: $0\rightarrow B_{n}^{!}\underset{C_{n}^{!}}{\otimes }%
X_{ik_{i}}\rightarrow B_{n}^{!}\underset{C_{n}^{!}}{\otimes }%
X_{ik_{i}-1}\rightarrow K_{k_{i}-2}\rightarrow 0$ induce exact sequences:

$0\rightarrow $Hom$_{B_{n}^{!}}$($K_{k_{i}-2}$,$N$)$\rightarrow $Hom$%
_{B_{n}^{!}}$($B_{n}^{!}\underset{C_{n}^{!}}{\otimes }X_{ik_{i}-1}$ ,$N$)$%
\rightarrow $Hom$_{B_{n}^{!}}$($B_{n}^{!}\underset{C_{n}^{!}}{\otimes }%
X_{ik_{i}}$,$N$)

$\rightarrow $Ext$_{B_{n}^{!}}^{1}$($K_{k_{i}-2}$,$N$)$\rightarrow $Ext$%
_{B_{n}^{!}}^{1}$($B_{n}^{!}\underset{C_{n}^{!}}{\otimes }X_{ik_{i}-1}$,$N$)$%
\rightarrow $Ext$_{B_{n}^{!}}^{1}$($B_{n}^{!}\underset{C_{n}^{!}}{\otimes }%
X_{ik_{i}}$,$N$)

$\rightarrow $Ext$_{B_{n}^{!}}^{2}$($K_{k_{i}-2}$,$N$)$\rightarrow $Ext$%
_{B_{n}^{!}}^{2}$($B_{n}^{!}\underset{C_{n}^{!}}{\otimes }X_{ik_{i}-1}$,$N$)$%
\rightarrow $..

where Ext$_{B_{n}^{!}}^{j}$($B_{n}^{!}\underset{C_{n}^{!}}{\otimes }%
X_{ik_{i}-l}$,$N$)$\cong $Ext$_{C_{n}^{!}}^{j}$($X_{ik_{i}-1}$,$N$)=0 for
all j$\geq $1.

It follows: Ext$_{B_{n}^{!}}^{j}$($K_{k_{i}-2}$,$N$)=0 for all j$\geq $2.But
Ext$_{B_{n}^{!}}^{j}$($K_{k_{i}-2},N$)

$\cong $Ext$_{B_{n}^{!}}^{j-1}$($K_{k_{i}-2}$,$\Omega ^{-1}N$) and Ext$%
_{B_{n}^{!}}^{j}$($K_{k_{i}-2}$,$\Omega ^{-1}N$)=0 for all j$\geq $1.

The sequences: $0\rightarrow K_{k_{i}-2}\rightarrow B_{n}^{!}\underset{%
C_{n}^{!}}{\otimes }X_{ik_{i}-2}\rightarrow K_{k_{i}-3}\rightarrow 0$ induce
exact \linebreak sequences:

Ext$_{B_{n}^{!}}^{j}$($K_{k_{i}-3}$,$\Omega ^{-1}N$)$\rightarrow $Ext$%
_{B_{n}^{!}}^{j}$($B_{n}^{!}\underset{C_{n}^{!}}{\otimes }X_{ik_{i}-2}$,$%
\Omega ^{-1}N$)$\rightarrow $Ext$_{B_{n}^{!}}^{j}$($K_{k_{i}-2}$,$\Omega
^{-1}N$)

$\rightarrow $Ext$_{B_{n}^{!}}^{j+1}$($K_{k_{i}-3}$,$\Omega ^{-1}N$)$%
\rightarrow $Ext$_{B_{n}^{!}}^{j+1}$($B_{n}^{!}\underset{C_{n}^{!}}{\otimes }%
X_{ik_{i}-2}$,$\Omega ^{-1}$N)$\rightarrow $..

Therefore Ext$_{B_{n}^{!}}^{j+1}$($K_{k_{i}-3}$,$\Omega ^{-1}N$)=0 for j$%
\geq $1which implies

Ext$_{B_{n}^{!}}^{j}$($K_{k_{i}-3}$,$\Omega ^{-2}N$)=0 for j$\geq $1.

Continuing by induction there exist some m$\geq $0 such that

Ext$_{B_{n}^{!}}^{j}$($U_{i}$/$U_{i-1}$,$\Omega ^{-m}N$)=0 for j$\geq $1$.$

By induction on $p$ we obtain Ext$_{B_{n}^{!}}^{j}$($Y$,$\Omega ^{-m}N$)=0
for j$\geq $1, in particular \linebreak Ext$_{B_{n}^{!}}^{1}$(Y,$\Omega
^{-m} $N)=0.

By Auslander-Reiten formula, Ext$_{B_{n}^{!}}^{1}$($Y$,$\Omega ^{-m}N$)$%
\cong $D(\underline{Hom}$_{B_{n}^{!}}$($\Omega ^{-m}N$,$\Omega ^{2}Y$))$%
\cong $\linebreak D(\underline{Hom}$_{B_{n}^{!}}$($N$,$\Omega ^{2+m}Y$))$%
\cong $D(\underline{Hom}$_{B_{n}^{!}}$($\Omega ^{n}M$,$\Omega ^{2+m}Y$)).

It follows\underline{ Hom}$_{B_{n}^{!}}$($\Omega ^{n}M$,$\Omega ^{2+m+n}X$%
)=0 which implies \underline{Hom}$_{B_{n}^{!}}$($M$,$\Omega ^{2+m}X$)=0.

Observe m depends only on $X$. Taking $\Omega ^{2+m}M$ instead of $M$ we
obtain\linebreak\ \underline{Hom}$_{B_{n}^{!}}$($\Omega ^{2+m}M$,$\Omega
^{2+m}X$)=\underline{Hom}$_{B_{n}^{!}}$($M$,$X$)=0.

Therefore \underline{Hom}$_{grB_{n}^{!}}$($M$,$X$)=0. It follows $M\in 
\mathcal{F}^{\prime }$.
\end{proof}

\begin{theorem}
\bigskip The category $\mathcal{F}^{\prime }$ is closed under the Nakayama
automorphism, $\mathcal{F}^{\prime }$ has Auslander Reiten sequences and
they are of the form $%
\mathbb{Z}
A_{\infty }$. Moreover, $\mathcal{F}^{\prime }$ is a triangulated category
with Auslander-Reiten triangles and they are of type $%
\mathbb{Z}
A_{\infty }$.
\end{theorem}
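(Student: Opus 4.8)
The plan is to argue directly from the explicit description of $\mathcal{F}^{\prime }$ obtained in the previous theorem: $M\in \mathcal{F}^{\prime }$ iff $\mathrm{res}_{C_{n}^{!}}M$, the restriction of $M$ to the subalgebra $C_{n}^{!}$, is an injective $C_{n}^{!}$-module. The first step is a closure lemma. Since $B_{n}^{!}=C_{n}^{!}\oplus C_{n}^{!}Z$ as a left $C_{n}^{!}$-module and, comparing the Hilbert series $\dim _{\Bbbk }(B_{n}^{!})_{j}=\binom{n}{j}+\binom{n}{j-1}$ with $\dim _{\Bbbk }(C_{n}^{!})_{j}=\binom{n}{j}$, the cyclic module $C_{n}^{!}Z$ (generated in degree one) must equal $C_{n}^{!}[-1]$, i.e. it is free. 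Hence the restriction to $C_{n}^{!}$ of any projective $B_{n}^{!}$-module — equivalently, by selfinjectivity of $B_{n}^{!}$, any injective one — is free, and therefore injective because the quantized exterior algebra $C_{n}^{!}$ is graded Frobenius. Consequently, in any short exact sequence $0\rightarrow A\rightarrow B\rightarrow C\rightarrow 0$ of $B_{n}^{!}$-modules with $A,C\in \mathcal{F}^{\prime }$, the restricted sequence splits over $C_{n}^{!}$ (as $\mathrm{res}_{C_{n}^{!}}A$ is injective), so $\mathrm{res}_{C_{n}^{!}}B\cong \mathrm{res}_{C_{n}^{!}}A\oplus \mathrm{res}_{C_{n}^{!}}C$ is injective and $B\in \mathcal{F}^{\prime }$. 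Applying this to the syzygy sequence $0\rightarrow \Omega M\rightarrow P\rightarrow M\rightarrow 0$ and the cosyzygy sequence $0\rightarrow M\rightarrow I\rightarrow \Omega ^{-1}M\rightarrow 0$ (with $P$ projective, $I$ injective) shows that $\mathcal{F}^{\prime }$ is closed under $\Omega ^{\pm 1}$; combined with extension closure and closure under direct summands (a summand of an injective is injective) this makes $\mathcal{F}^{\prime }$ a full triangulated subcategory of $\underline{\mathrm{gr}}_{B_{n}^{!}}$. (Triangulatedness can alternatively be deduced formally: $\mathcal{F}=\overline{\phi }(\mathcal{F}^{\prime })$ is the right orthogonal $\mathcal{T}^{\perp }$ in $D^{b}(\mathrm{Qgr}_{B_{n}})$, and such orthogonals are always triangulated subcategories.)

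Next I verify closure under the Nakayama automorphism. By Section 1, $\sigma $ restricts to a graded automorphism of $C_{n}^{!}$, so for any $B_{n}^{!}$-module $M$ there is a natural isomorphism $\mathrm{res}_{C_{n}^{!}}(\sigma M)\cong {}_{\sigma |_{C_{n}^{!}}}(\mathrm{res}_{C_{n}^{!}}M)$, the restriction twisted by the ring automorphism $\sigma |_{C_{n}^{!}}$. Twisting by a ring automorphism is a self-equivalence of the category of graded $C_{n}^{!}$-modules, hence preserves injectives, so $\mathrm{res}_{C_{n}^{!}}M$ injective forces $\mathrm{res}_{C_{n}^{!}}(\sigma M)$ injective; thus $\sigma M\in \mathcal{F}^{\prime }$ whenever $M\in \mathcal{F}^{\prime }$.

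For the Auslander–Reiten statements I proceed exactly as in the proof of the corresponding theorem for $\mathcal{T}^{\prime }$. Since $B_{n}^{!}$ is a finite dimensional selfinjective algebra, $\mathrm{gr}_{B_{n}^{!}}$ has almost split sequences with translate $\tau =\sigma \Omega ^{2}$. For an indecomposable non-projective $M\in \mathcal{F}^{\prime }$ take the almost split sequences $0\rightarrow \sigma \Omega ^{2}M\rightarrow E\rightarrow M\rightarrow 0$ and $0\rightarrow M\rightarrow F\rightarrow \sigma ^{-1}\Omega ^{-2}M\rightarrow 0$; by the first two steps the outer terms lie in $\mathcal{F}^{\prime }$, so the extension-closure lemma gives $E,F\in \mathcal{F}^{\prime }$. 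Because $\mathcal{F}^{\prime }$ is a full subcategory, these remain almost split in $\mathcal{F}^{\prime }$, and they yield Auslander–Reiten triangles $\sigma \Omega ^{2}M\rightarrow E\rightarrow M\rightarrow (\sigma \Omega ^{2}M)[-1]$ and $M\rightarrow F\rightarrow \sigma ^{-1}\Omega ^{-2}M\rightarrow M[-1]$ in the triangulated category $\mathcal{F}^{\prime }$. Since $\mathcal{F}^{\prime }$ is closed under $\tau $, $\tau ^{-1}$, the middle terms of almost split sequences, and direct summands, it contains entire Auslander–Reiten components of $\underline{\mathrm{gr}}_{B_{n}^{!}}$; by the computation in [MZ] that all such components are of type $\mathbb{Z}A_{\infty }$, the components of $\mathcal{F}^{\prime }$ are of type $\mathbb{Z}A_{\infty }$ as well.

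The main obstacle is the first step — showing that the restriction of a projective $B_{n}^{!}$-module to $C_{n}^{!}$ is injective — which reduces to the selfinjectivity (Frobenius property) of the quantized exterior algebra $C_{n}^{!}$ together with the freeness of $C_{n}^{!}Z$ over $C_{n}^{!}$. Once this middle-term closure lemma is established, closure under $\Omega ^{\pm 1}$, under $\sigma $ and under extensions, and hence all three assertions of the theorem, follow by routine diagram chasing and by transporting the $\mathbb{Z}A_{\infty }$ structure of [MZ] along the inclusion $\mathcal{F}^{\prime }\hookrightarrow \underline{\mathrm{gr}}_{B_{n}^{!}}$.
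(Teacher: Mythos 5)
Your proof is correct and follows essentially the same route as the paper: both rest on the characterization of $\mathcal{F}^{\prime}$ by injectivity of the restriction to $C_{n}^{!}$, use exactness of restriction plus self-injectivity (restrictions of projective $B_{n}^{!}$-modules are injective over $C_{n}^{!}$) and splitting arguments to get closure under $\Omega^{\pm 1}$, $\sigma$ and cones, and then import the almost split sequences $0\rightarrow \sigma\Omega^{2}M\rightarrow E\rightarrow M\rightarrow 0$ from $\mathrm{gr}_{B_{n}^{!}}$ and quote [MZ] for the $\mathbb{Z}A_{\infty}$ shape. You merely fill in details the paper leaves implicit (freeness of $C_{n}^{!}Z$ over $C_{n}^{!}$, the twisting argument for $\sigma$-closure, and the orthogonality argument for triangulatedness, where the paper instead exhibits the cone explicitly via $0\rightarrow M\rightarrow P\oplus N\rightarrow L\rightarrow 0$).
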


\begin{proof}
Let $M$ be an indecomposable non projective object in $\mathcal{F}^{\prime }$
and $0\rightarrow \Omega M\rightarrow P\rightarrow M\rightarrow 0$ exact
with $P$ the projective cover of $M$. Since the restriction of $P$ to $%
C_{n}^{!}$ is projective and restriction is an exact functor, it follows $%
\Omega M$ is in $\mathcal{F}^{\prime }$. Similarly, $\Omega ^{-1}M$ is in $%
\mathcal{F}^{\prime }$.

If $P$ is a projective $B_{n}^{!}$ -module and $\sigma $ the Nakayama
automorphism, then $\sigma P$ is also projective. Therefore: $\mathcal{F}%
^{\prime }$ is closed under the Nakayama automorphism.

It is clear now that $\mathcal{F}^{\prime }$ has Auslander Reiten sequences
and they are of the form $%
\mathbb{Z}
A_{\infty }$, by [MZ].

Let f:$M\rightarrow N$ be a homomorphism with $M$, $N$ in $\mathcal{F}%
^{\prime }$ and let j:$M\rightarrow P$ be the injective envelope of $M$.
There is an exact sequence: $0\rightarrow M\rightarrow P\oplus N\rightarrow
L\rightarrow 0$ with $M$ and $P\oplus N$ in $\mathcal{F}^{\prime }$. Then $L$
is also in $\mathcal{F}^{\prime }$ and the triangle $M\rightarrow
N\rightarrow L\rightarrow \Omega ^{-1}M$ is a triangle in $\mathcal{F}%
^{\prime }$.
\end{proof}

We have characterized the pair ($\mathcal{F}^{\prime }$,$\mathcal{T}^{\prime
}$) corresponding to ($\mathcal{T}$,$\mathcal{F})$ under the duality $%
\overline{\phi }$:\underline{gr}$_{_{B_{n}^{!}}}\rightarrow $D$^{b}$(Qgr$%
_{B_{n}}$)$.$ Applying the usual duality D:\underline{gr}$%
_{_{B_{n}^{!}}}\rightarrow $\underline{gr}$_{_{B_{n}^{!op}}}$we obtain a
pair (D($\mathcal{T}^{\prime }$),D($\mathcal{F}^{\prime }$)) which
corresponds to ($\mathcal{T}$,$\mathcal{F}$) under the equivalence: 

$\overline{\phi }$D:\underline{gr}$_{_{B_{n}^{!op}}}\rightarrow $D$^{b}$(Qgr$%
_{B_{n}}$).

Observe the following:

From the bimodule isomorphism $B_{n}^{!}\sigma ^{-1}\cong $D($B_{n}^{!}$),
for any induced $B_{n}^{!}$-module $B_{n}^{!}\underset{C_{n}^{!}}{\otimes }X$%
, there are natural isomorphisms:

Hom$_{B_{n}^{!}}$($B_{n}^{!}\underset{C_{n}^{!}}{\otimes }X$,D($B_{n}^{!}$))$%
\cong $D($B_{n}^{!}\underset{C_{n}^{!}}{\otimes }X$)$\cong $Hom$_{B_{n}^{!}}$%
($B_{n}^{!}\underset{C_{n}^{!}}{\otimes }X$, $B_{n}^{!}\sigma ^{-1}$)$\cong $

Hom$_{C_{n}^{!}}$($X$, $B_{n}^{!}\sigma ^{-1}$)$\cong $Hom$_{C_{n}^{!}}$($X$,%
$C_{n}^{!}$)$\underset{C_{n}^{!}}{\otimes }B_{n}^{!}\sigma ^{-1}$.

For any finitely generated right $C_{n}^{!}$-module $Y$ there exists a left $%
C_{n}^{!}$-module $X$ such that Hom$_{C_{n}^{!}}$($X$,$C_{n}^{!}$)$\cong Y$,
hence D($B_{n}^{!}\underset{C_{n}^{!}}{\otimes }X$)$\sigma \cong Y\underset{%
C_{n}^{!}}{\otimes }B_{n}^{!}$. Since $\mathcal{T}^{\prime }$ is invariant
under $\sigma $, D($\mathcal{T}^{\prime }$) is also invariant under $\sigma $
and D($\mathcal{T}^{\prime }$) contains the induced modules.

Let $\emph{B}$ be a triangulated subcategory of \underline{gr}$%
_{B_{n}^{!op}} $ containing the induced modules. A triangle $A$ $\overset{f}{%
\rightarrow }B\overset{g}{\rightarrow }C\overset{h}{\rightarrow }A$[1] in 
\underline{gr}$_{B_{n}^{!}}$ comes from an exact sequence\linebreak\ $%
0\rightarrow A$ $\overset{\left( 
\begin{array}{c}
\text{f} \\ 
\text{u}%
\end{array}%
\right) }{\rightarrow }B\oplus P\overset{(g,v)}{\rightarrow }C\rightarrow 0$
with \ $P$ a projective module, hence \linebreak D($C$)$\overset{D(g)}{%
\rightarrow }$D($B$)$\overset{D(f)}{\rightarrow }$D($A$)$\rightarrow $D($C$%
)[1] is a triangle in \underline{gr}$_{_{B_{n}^{!op}}}$. Therefore: D(\emph{B%
}) is a triangulated category containing the duals of the induced modules D($%
Y\underset{C_{n}^{!}}{\otimes }B_{n}^{!}$)$\cong $\linebreak D(D($B_{n}^{!}%
\underset{C_{n}^{!}}{\otimes }X$)$\sigma $)$\cong $D(Hom$_{C_{n}^{!}}$($X,$ $%
C_{n}^{!}$)$\underset{C_{n}^{!}}{\otimes }B_{n}^{!}$)$\cong $Hom$%
_{B_{n}^{!}} $(Hom$_{C_{n}^{!}}$($X$, $C_{n}^{!}$)$\underset{C_{n}^{!}}{%
\otimes }B_{n}^{!} $,$\sigma B_{n}^{!}$)$\cong \sigma B_{n}^{!}\underset{%
C_{n}^{!}}{\otimes }X$. Clearly $\sigma $D(\emph{B}) is a triangulated
category containing the induced modules. Therefore: $\mathcal{T}^{\prime
}\subset $ $\sigma $D(\emph{B}). Since $\mathcal{T}^{\prime }$ is closed
under Nakayama%
\'{}%
s automorphism $\sigma $, $\mathcal{T}^{\prime }\subset $ D(\emph{B}). It
follows D($\mathcal{T}^{\prime }$)$\subset $ $\emph{B}$ and D($\mathcal{T}%
^{\prime }$) can be described as the smallest triangulated subcategory of 
\underline{gr}$_{_{B_{n}^{!op}}}$ that contains the induced modules.

The usual duality D induces an isomorphism:

D:Ext$_{C_{n}^{!}}^{i}$($M$,$N$)$\rightarrow $Ext$_{C_{n}^{op!}}^{i}$(D($N$%
),D($M$)).

It follows that the restriction of $M$ to $C_{n}^{!}$ is injective if and
only if the restriction of D($M$) to $C_{n}^{op!}$ is projective
(injective). It follows D($\mathcal{F}^{\prime }$) is the category of $%
B_{n}^{op!}$-modules whose restriction to $C_{n}^{op!}$ is injective.

\emph{T=}D($\mathcal{T}^{\prime }$) is a "epasse" subcategory of \underline{%
gr}$_{_{B_{n}^{!op}}}.$The functor $\overline{\phi }$D induces an
equivalence of categories: \underline{gr}$_{_{B_{n}^{!op}}}$/\emph{T}$\cong $%
D$^{b}$(Qgr$_{B_{n}^{op}}$)/$\mathcal{T}$ and we proved D$^{b}$(Qgr$%
_{B_{n}^{op}}$)/$\mathcal{T}\cong $\linebreak D$^{b}$(gr$_{(B_{n})_{Z}}$).

The equivalence gr$_{(B_{n})_{Z}}\cong $mod$_{A_{n}}$ induces an
equivalence: D$^{b}$(gr$_{(B_{n})_{Z}}$)$\cong $\linebreak D$^{b}$(mod$%
_{A_{n}}$).

We have proved:

\begin{theorem}
There is an equivalence of triangulated categories:

\underline{gr}$_{_{B_{n}^{!}}}$/\emph{T}$\cong $D$^{b}$(mod$_{A_{n}}$).
\end{theorem}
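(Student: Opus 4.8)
The plan is to obtain the asserted equivalence by concatenating three equivalences of triangulated categories that have already been established, the only general fact needed being the functoriality of Verdier localization: an exact equivalence of triangulated categories carrying one thick subcategory onto another induces an equivalence of the associated Verdier quotients.

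First I would use the equivalence $\overline{\phi}D:\underline{\mathrm{gr}}_{B_n^{!op}}\to D^b(\mathrm{Qgr}_{B_n^{op}})$, obtained by composing the duality $\overline{\phi}$ with the standard duality $D$, together with the fact that $\overline{\phi}D$ carries $T=D(\mathcal{T}')$ exactly onto $\mathcal{T}$. Since $\mathcal{T}$ has been shown to be thick, so that the Verdier quotient $D^b(\mathrm{Qgr}_{B_n^{op}})/\mathcal{T}$ is defined, functoriality of the quotient yields $\underline{\mathrm{gr}}_{B_n^{!op}}/T\cong D^b(\mathrm{Qgr}_{B_n^{op}})/\mathcal{T}$. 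The point deserving attention here is that $T$ is mapped \emph{onto} $\mathcal{T}$ and not merely into it; this comes down to the fact that the pair $(\mathcal{F}',\mathcal{T}')$ built from $(\mathcal{T},\mathcal{F})$ via $\overline{\phi}$ is tight in both directions, which is exactly what the characterizations of $\mathcal{T}'$ and $\mathcal{F}'$ in the previous section supply, together with $D$ being a genuine duality ($D^2\cong\mathrm{id}$) on finitely generated graded modules. Granting that, this step is formal.

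Next I would invoke the analogue for $B_n^{op}$ of the main theorem of the previous section, i.e. $D^b(\mathrm{Qgr}_{B_n^{op}})/\mathcal{T}\cong D^b(\mathrm{gr}_{(B_n)_Z})$ (the argument being verbatim, after identifying the relevant localization), and finally Dade's theorem, which provides an exact equivalence of abelian categories $\mathrm{gr}_{(B_n)_Z}\cong \mathrm{mod}_{A_n}$ and hence an induced triangle equivalence on bounded derived categories $D^b(\mathrm{gr}_{(B_n)_Z})\cong D^b(\mathrm{mod}_{A_n})$. Composing the three,
\[ \underline{\mathrm{gr}}_{B_n^{!op}}/T\;\cong\;D^b(\mathrm{Qgr}_{B_n^{op}})/\mathcal{T}\;\cong\;D^b(\mathrm{gr}_{(B_n)_Z})\;\cong\;D^b(\mathrm{mod}_{A_n}), \]
which is the statement.

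Thus the proof amounts to bookkeeping; all the substantive content sits in the preceding two sections. The one place genuinely requiring care is keeping track of the several ``op'' decorations so that the thick subcategories match on the nose at each stage and the composite of the three equivalences is again a triangle equivalence; once the identification of $T$ with $\mathcal{T}$ from the first step is in place, no further argument is needed.
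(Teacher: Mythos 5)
Your proposal is correct and follows essentially the same route as the paper: the paper likewise obtains the result by transporting the thick subcategory $T=D(\mathcal{T}')$ onto $\mathcal{T}$ along the equivalence $\overline{\phi}D$, invoking the equivalence $D^b(\mathrm{Qgr}_{B_n})/\mathcal{T}\cong D^b(\mathrm{gr}_{(B_n)_Z})$ from the earlier section, and then applying the derived functor of the Dade-type equivalence $\mathrm{gr}_{(B_n)_Z}\cong\mathrm{mod}_{A_n}$. Your extra attention to the ``op'' decorations is reasonable bookkeeping (the paper itself is loose about them in the theorem statement) but does not change the argument.
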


The category \emph{F}=D($\mathcal{F}^{\prime }$) is the category of all 
\emph{T} -local objects it is triangulated. By [Mi], there is a full
embedding: $\emph{F}\rightarrow $\underline{gr}$_{_{B_{n}^{!op}}}$/\emph{T}$%
\cong $D$^{b}$(mod$_{A_{n}}$).

\begin{proposition}
The category ind$_{C_{n}^{!}}$ of all induced $B_{n}^{!}$-modules is
contravariantly finite in gr$_{B_{n}^{!}}$.
\end{proposition}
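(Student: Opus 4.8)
The plan is to exhibit, for an arbitrary object $M\in\text{gr}_{B_{n}^{!}}$, an explicit right $\mathrm{ind}_{C_{n}^{!}}$-approximation, namely the counit of the induction--restriction adjunction, and thereby deduce contravariant finiteness directly from the definition. Recall from Section~1 that $C_{n}^{!}$ is a subalgebra of $B_{n}^{!}$ with $B_{n}^{!}=C_{n}^{!}\oplus ZC_{n}^{!}$ as left (and right) $C_{n}^{!}$-module; thus $B_{n}^{!}$ is free of finite rank over $C_{n}^{!}$, the induction functor $B_{n}^{!}\underset{C_{n}^{!}}{\otimes}-\colon\text{gr}_{C_{n}^{!}}\to\text{gr}_{B_{n}^{!}}$ is exact, and it is left adjoint to the restriction functor $\mathrm{res}\colon\text{gr}_{B_{n}^{!}}\to\text{gr}_{C_{n}^{!}}$ via the natural isomorphism $\mathrm{Hom}_{B_{n}^{!}}(B_{n}^{!}\underset{C_{n}^{!}}{\otimes}X,M)\cong\mathrm{Hom}_{C_{n}^{!}}(X,\mathrm{res}\,M)$ already used repeatedly above. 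Since $B_{n}^{!}$ is finite dimensional over $\Bbbk$, every $M\in\text{gr}_{B_{n}^{!}}$ is finite dimensional, hence $\mathrm{res}\,M$ is a finitely generated graded $C_{n}^{!}$-module and $B_{n}^{!}\underset{C_{n}^{!}}{\otimes}\mathrm{res}\,M$ is again an object of $\text{gr}_{B_{n}^{!}}$ lying in $\mathrm{ind}_{C_{n}^{!}}$.

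First I would take the counit $\varepsilon_{M}\colon B_{n}^{!}\underset{C_{n}^{!}}{\otimes}\mathrm{res}\,M\to M$, $b\otimes m\mapsto bm$, a graded $B_{n}^{!}$-homomorphism of degree zero out of an induced module. Then, given any induced module $B_{n}^{!}\underset{C_{n}^{!}}{\otimes}Y$ with $Y\in\text{gr}_{C_{n}^{!}}$ and any morphism $g\colon B_{n}^{!}\underset{C_{n}^{!}}{\otimes}Y\to M$ in $\text{gr}_{B_{n}^{!}}$, I would pass to the adjunct map $\widetilde{g}\colon Y\to\mathrm{res}\,M$, $\widetilde{g}(y)=g(1\otimes y)$, apply induction to it, and compose with $\varepsilon_{M}$. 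This recovers $g$, because $\varepsilon_{M}\bigl(b\otimes\widetilde{g}(y)\bigr)=b\,g(1\otimes y)=g(b\otimes y)$ for all $b\in B_{n}^{!}$, $y\in Y$ --- this is just the triangle identity $\varepsilon F\circ F\eta=1_{F}$ for the adjunction written out on elements. Hence every morphism from an induced module to $M$ factors through $\varepsilon_{M}$, so $\varepsilon_{M}$ is a right $\mathrm{ind}_{C_{n}^{!}}$-approximation of $M$; since $M$ was arbitrary, $\mathrm{ind}_{C_{n}^{!}}$ is contravariantly finite in $\text{gr}_{B_{n}^{!}}$.

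The argument is essentially formal once the adjunction is in place, so I do not expect a genuine obstacle; the two points that deserve a word of justification are that the approximating object $B_{n}^{!}\underset{C_{n}^{!}}{\otimes}\mathrm{res}\,M$ really lies in the \emph{finitely generated} graded category --- which is precisely where the finite dimensionality of $B_{n}^{!}$, hence of $\mathrm{res}\,M$ over $C_{n}^{!}$, is used --- and that $\varepsilon_{M}$ is a graded homomorphism of degree zero, which holds because induction, restriction and the adjunction isomorphism all preserve gradings. One should not try to shrink the source of $\varepsilon_{M}$ by precomposing with a graded projective cover $P\twoheadrightarrow\mathrm{res}\,M$: an arbitrary $\widetilde{g}\colon Y\to\mathrm{res}\,M$ need not lift through such a cover, so that refinement fails, whereas the counit itself works without any modification.
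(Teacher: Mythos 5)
Your proof is correct and follows essentially the same route as the paper: both take the counit (multiplication) map $B_{n}^{!}\underset{C_{n}^{!}}{\otimes}\mathrm{res}\,M\rightarrow M$ as the right $\mathrm{ind}_{C_{n}^{!}}$-approximation and obtain the factorization of any $B_{n}^{!}\underset{C_{n}^{!}}{\otimes}N\rightarrow M$ from the induction--restriction adjunction. Your explicit check of the triangle identity is just a spelled-out version of the naturality-square argument in the paper.
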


\begin{proof}
Let $M$ be a $B_{n}^{!}$-module and $\mu $: $B_{n}^{!}\underset{C_{n}^{!}}{%
\otimes }M\rightarrow M$ the map given by multiplication. Let $\alpha $:Hom$%
_{B_{n}^{!}}$($B_{n}^{!}\underset{C_{n}^{!}}{\otimes }M$,$M$)$\rightarrow $%
Hom$_{C_{n}^{!}}$($M$,$M$) the morphism giving the adjunction. It is easy to
see that $\alpha $($\mu $)=1$_{M}$.

Let $\varphi $:$B_{n}^{!}\underset{C_{n}^{!}}{\otimes }N\rightarrow M$ be
any map and $\alpha $($\varphi $)=f: $N\rightarrow M$ the map given by
adjunction. There is a commutative square

$%
\begin{array}{ccc}
\text{Hom}_{B_{n}^{!}}\text{(}B_{n}^{!}\underset{C_{n}^{!}}{\otimes }M\text{,%
}M\text{)} & \overset{\alpha _{M}}{\rightarrow } & \text{Hom}_{C_{n}^{!}}%
\text{(}M\text{,}M\text{)} \\ 
\downarrow \text{(1}\otimes \text{f,}M\text{)} &  & \downarrow \text{(f,}M%
\text{)} \\ 
\text{Hom}_{B_{n}^{!}}\text{(}B_{n}^{!}\underset{C_{n}^{!}}{\otimes }N\text{,%
}M\text{)} & \overset{\alpha _{N}}{\rightarrow } & \text{Hom}_{C_{n}^{!}}%
\text{(}N\text{,}M\text{)}%
\end{array}%
$

from the commutativity of the diagram f=$\alpha _{N}$($\varphi $)=$\alpha
_{N}$($\mu $1$\otimes $f) implies $\varphi $=$\mu $1$\otimes $f.

We have proved the triangle:

$%
\begin{array}{ccc}
&  & B_{n}^{!}\underset{C_{n}^{!}}{\otimes }N \\ 
& \text{1}\otimes \text{f}\swarrow & \downarrow \phi \\ 
B_{n}^{!}\underset{C_{n}^{!}}{\otimes }M & \overset{\mu }{\rightarrow } & M%
\end{array}%
$

commutes.
\end{proof}

\begin{corollary}
add(ind$_{C_{n}^{!}}$) is contravariantly finite.
\end{corollary}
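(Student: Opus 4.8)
The plan is to read this off the preceding Proposition with only a formal additive-closure argument. Recall that the Proposition shows, for every graded $B_n^!$-module $M$, that the multiplication map $\mu_M\colon B_n^!\underset{C_n^!}{\otimes}M\to M$ is a right $\mathrm{ind}_{C_n^!}$-approximation of $M$: the commuting triangle established there says precisely that every morphism $\varphi\colon B_n^!\underset{C_n^!}{\otimes}N\to M$ factors as $\varphi=\mu_M(1\otimes f)$, where $f=\alpha_N(\varphi)$, and $B_n^!\underset{C_n^!}{\otimes}M$ is itself an induced module. So the only thing to do is to promote this from $\mathrm{ind}_{C_n^!}$ to its additive closure $\mathrm{add}(\mathrm{ind}_{C_n^!})$, i.e. to check that $\mu_M$ remains a right $\mathrm{add}(\mathrm{ind}_{C_n^!})$-approximation.

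The step I would carry out is the standard splitting argument. Given $Y\in\mathrm{add}(\mathrm{ind}_{C_n^!})$, choose induced modules $X_i=B_n^!\underset{C_n^!}{\otimes}M_i$ ($1\le i\le r$) together with a split epimorphism $\pi\colon\bigoplus_{i=1}^r X_i\to Y$ and a splitting $\iota\colon Y\to\bigoplus_i X_i$, $\pi\iota=1_Y$. Since $B_n^!\underset{C_n^!}{\otimes}-$ preserves finite direct sums, $\bigoplus_i X_i\cong B_n^!\underset{C_n^!}{\otimes}(\bigoplus_i M_i)$ is again induced; hence for any $g\colon Y\to M$ the composite $g\pi\colon\bigoplus_i X_i\to M$ factors through $\mu_M$ by the Proposition, say $g\pi=\mu_M h$ with $h\colon\bigoplus_i X_i\to B_n^!\underset{C_n^!}{\otimes}M$. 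Then $g=g\pi\iota=\mu_M(h\iota)$, so $g$ factors through $\mu_M$ via $h\iota\colon Y\to B_n^!\underset{C_n^!}{\otimes}M$. As $B_n^!\underset{C_n^!}{\otimes}M$ lies in $\mathrm{ind}_{C_n^!}\subseteq\mathrm{add}(\mathrm{ind}_{C_n^!})$, this shows $\mu_M$ is a right $\mathrm{add}(\mathrm{ind}_{C_n^!})$-approximation of $M$.

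Since $M$ was an arbitrary object of $\mathrm{gr}_{B_n^!}$, it follows that $\mathrm{add}(\mathrm{ind}_{C_n^!})$ is contravariantly finite in $\mathrm{gr}_{B_n^!}$, which is the assertion. There is essentially no obstacle here: the one point worth noting is that $\mathrm{ind}_{C_n^!}$ is already closed under finite direct sums — so that forming the additive closure only adds direct summands and the splitting argument above suffices — and this is immediate from the fact that induction commutes with finite coproducts. Conceptually, $\mu_M$ is, via the adjunction isomorphism $\mathrm{Hom}_{B_n^!}(B_n^!\underset{C_n^!}{\otimes}-,M)\cong\mathrm{Hom}_{C_n^!}(-,M)$ recalled in the proof of the Proposition, the counit of the induction–restriction adjunction, and counits of such adjunctions always furnish right approximations by induced objects; the corollary merely upgrades this from the induced objects to their additive closure.
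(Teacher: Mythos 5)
Your argument is correct and is exactly the routine one the paper implicitly relies on: the paper states this corollary without proof as an immediate consequence of the preceding Proposition, and your splitting argument (the counit $\mu_M$ remains a right approximation after passing to direct summands of finite sums of induced modules, since induction commutes with finite direct sums) is the standard way to fill in that step. No issues.
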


\begin{corollary}
ind$_{C_{n}^{!}}$is functorialy finite.

\begin{proof}
It is clear from the duality D(ind$_{C_{n}^{!}}$)$\cong $ind$_{C_{n}^{op!}}$
\end{proof}
\end{corollary}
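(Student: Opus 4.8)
The plan is to upgrade the contravariant finiteness established in the preceding Proposition to functorial finiteness by transporting it through the standard duality $D$. Recall that a full subcategory of $\mathrm{gr}_{B_n^!}$ is functorially finite precisely when it is simultaneously contravariantly finite and covariantly finite. The Proposition already gives that $\mathrm{ind}_{C_n^!}$ is contravariantly finite in $\mathrm{gr}_{B_n^!}$, so the remaining task is to show it is covariantly finite there.

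First I would observe that the proof of the Proposition uses nothing about $B_n^!$ beyond the facts that it is free as a $C_n^!$-module and that $B_n^!\otimes_{C_n^!}-$ is left adjoint to restriction, via $\mathrm{Hom}_{B_n^!}(B_n^!\otimes_{C_n^!}-,-)\cong\mathrm{Hom}_{C_n^!}(-,-)$. Since $B_n^{!op}=C_n^{!op}\oplus ZC_n^{!op}$ is again free over $C_n^{!op}$, the verbatim argument shows that $\mathrm{ind}_{C_n^{!op}}$ is contravariantly finite in $\mathrm{gr}_{B_n^{!op}}$. Now apply the standard duality $D:\mathrm{gr}_{B_n^{!op}}\to\mathrm{gr}_{B_n^!}$, a contravariant equivalence: it sends a right $\mathrm{ind}_{C_n^{!op}}$-approximation $X\to M$ to a left $D(\mathrm{ind}_{C_n^{!op}})$-approximation $D(M)\to D(X)$, so $D(\mathrm{ind}_{C_n^{!op}})$ is covariantly finite in $\mathrm{gr}_{B_n^!}$. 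It then remains to identify $D(\mathrm{ind}_{C_n^{!op}})$ with $\mathrm{ind}_{C_n^!}$: from the formula $D(B_n^!\otimes_{C_n^!}X)\sigma\cong Y\otimes_{C_n^!}B_n^!$ established above, together with the fact that every finitely generated right $C_n^!$-module has the form $Y=\mathrm{Hom}_{C_n^!}(X,C_n^!)$, one sees that $D$ carries left induced modules onto right induced modules up to a twist by the Nakayama automorphism $\sigma$; and since $\sigma$ restricts to an algebra automorphism of $C_n^!$ (Section 1), twisting by $\sigma$ is an autoequivalence that preserves the class of induced modules. Hence $D(\mathrm{ind}_{C_n^{!op}})\cong\mathrm{ind}_{C_n^!}$ is covariantly finite in $\mathrm{gr}_{B_n^!}$, and together with the Proposition this makes $\mathrm{ind}_{C_n^!}$ functorially finite.

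The step I expect to need the most care is precisely this last identification, and within it the claim that the $\sigma$-twist does not lead out of the class of induced modules. This is settled by the structural fact recorded in Section 1 that $\sigma$ respects the splitting $B_n^!=C_n^!\oplus ZC_n^!$, which yields $\sigma(B_n^!\otimes_{C_n^!}M)\cong B_n^!\otimes_{C_n^!}(\sigma M)$; everything else is routine bookkeeping with approximations under a contravariant equivalence, so once the identification is in place the corollary follows formally.
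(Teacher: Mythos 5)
Your argument is correct and follows essentially the same route as the paper: the paper's one-line proof also deduces covariant finiteness from the already-established contravariant finiteness by transporting approximations through the standard duality $D$ and identifying duals of induced modules with induced modules over the opposite algebra. Your treatment is just a fleshed-out version, and in fact more careful than the paper's, since you explicitly account for the Nakayama twist $\sigma$ in the identification $D(B_{n}^{!}\otimes_{C_{n}^{!}}X)\sigma\cong Y\otimes_{C_{n}^{!}}B_{n}^{!}$ and note that $\sigma$ preserves the class of induced modules because it restricts to $C_{n}^{!}$.
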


Observe (ind$_{C_{n}^{!}}$)$^{\perp }\cong \mathcal{F}^{\prime }$ however ind%
$_{C_{n}^{!}}$ is not necessary closed under extensions and we can not
conclude $\mathcal{F}^{\prime }$ contravariantly finite.

For the notions of contravariantly finite, covariantly finite and
functorialy finite, we refer to [AuS].

\newpage

\begin{center}
{\LARGE References}
\end{center}

[AS] Artin M. Schelter W. Graded algebras of dimension 4, Advances in Math.
66 (1987), 172-216.

[AuS] Auslander, M., Smalo, S.Almost split sequences in subcategories. J.
Algebra 69 (1981), no. 2, 426--454.

\ [AZ] Artin, M.; Zhang, J. J. Noncommutative projective schemes. Adv. Math.
109 (1994), no. 2, 228-287.

[Co] Coutinho S.C. A Primer of Algebraic D-modules, London Mathematical
Society, Students Texts 33, 1995

[Da] Dade, Everett C. Group-graded rings and modules. Math. Z. 174 (1980),
no. 3, 241--262.

[Ga] Gabriel, Pierre Des cat\'{e}gories ab\'{e}liennes. Bull. Soc. Math.
France 90 1962 323--448.

[GM1] Green, E. L.; Mart\'{\i}nez Villa, R.; Koszul and Yoneda algebras.
Representation theory of algebras (Cocoyoc, 1994), 247--297, CMS Conf.
Proc., 18, Amer. Math. Soc., Providence, RI, 1996.

[GM2] Green, E. L.; Mart\'{\i}nez-Villa, R.; Koszul and Yoneda algebras. II.
Algebras and modules, II (Geiranger, 1996), 227--244, CMS Conf. Proc., 24,
Amer. Math. Soc., Providence, RI, 1998.

[Le] Levandovskyy V. : On Gr\"{o}bner bases for noncommutative G-algebras .
In: Proceedings of the 8th Rhine Workshop on Computer Algebra, Mannheim,
Germany. (2002)

[M] Martinez-Villa, R. Graded, Selfinjective, and Koszul Algebras, J.
Algebra 215, 34-72 1999

[M2] Mart\'{\i}nez-Villa, R. Serre duality for generalized Auslander regular
algebras. Trends in the representation theory of finite-dimensional algebras
(Seattle, WA, 1997), 237--263, Contemp. Math., 229, Amer. Math. Soc.,
Providence, RI, 1998.

[Mac] Maclane S. Homology, Band 114, Springer, 1975.

[MM] Mart\'{\i}nez-Villa, R., Martsinkovsky; A. Stable Projective Homotopy
Theory of Modules, Tails, and Koszul Duality, Comm. Algebra 38 (2010), no.
10, 3941--3973.

[MMo] Homogeneous G-algebras I, preprint, UNAM\ 2014.

[MS] Mart\'{\i}nez Villa, Roberto; Saor\'{\i}n, M.; Koszul equivalences and
dualities. Pacific J. Math. 214 (2004), no. 2, 359--378.

[MZ] Mart\'{\i}nez-Villa, Roberto; Zacharia, Dan; Approximations with
modules having linear resolutions. J. Algebra 266 (2003), no. 2, 671--697.

[Mil] Mili\v{c}i\'{c}, D. Lectures on Algebraic Theory of D-modules.
University of Utah 1986.

[Mi] Miyachi, Jun-Ichi; Derived Categories with Applications to
Representations of Algebras, Chiba Lectures, 2002.

[Mo] Mondragon J. Sobre el algebra de Weyl homgenizada, tesis doctoral (in
process)

[P] Popescu N.; Abelian Categories with Applications to Rings and Modules;,
L.M.S. Monographs 3, Academic Press 1973.

[Sm] Smith, P.S.; Some finite dimensional algebras related to elliptic
curves, Rep. Theory of Algebras and Related Topics, CMS Conference
Proceedings, Vol. 19, 315-348, Amer. Math. Soc 1996.

[Y] Yamagata, K.; Frobenius algebras. Handbook of algebra, Vol. 1, 841--887,
North-Holland, Amsterdam, 1996.

\end{document}